\theoremstyle{definition}
\numberwithin{equation}{subsection}
\newtheorem{thm}{Theorem}[section]
\newtheorem{prop}[thm]{Proposition}
\newtheorem{cor}[thm]{Corollary}
\newtheorem{lem}[thm]{Lemma}
\newtheorem{defn}[thm]{Definition}
\newtheorem{ex}[thm]{Example}
\newtheorem{remk}[thm]{Remark}
\newcommand{\tn}[1]{\textnormal{#1}}
\newcommand{\wt}[1]{\widetilde{#1}}
\newcommand{\wh}[1]{\widehat{#1}}
\newcommand{\vocab}[1]{\textbf{#1}}
\newcommand{\bbz}{\mathbb Z}
\newcommand{\bbn}{\mathbb N}
\newcommand{\bbc}{\mathbb C}
\newcommand{\vspan}{\tn{span}}
\newcommand{\diff}{\backslash}
\newcommand{\coker}{\tn{coker}}
\newcommand{\Hom}{\tn{Hom}}
\newcommand*{\shom}{\mathcal{H}\kern -.5pt om}
\newcommand{\bbl}{\mathbb{L}}
\newcommand{\spec}{\textnormal{Spec}}
\newcommand{\Ad}{\textnormal{Ad}}
\newcommand{\Gr}{\mathrm{Gr}}
\newcommand{\ord}{\text{ord}}
\newcommand{\dord}{\text{dord}}
\newcommand{\cK}{{\mathcal{K}}}
\newcommand{\cA}{{\mathcal{A}}}
\begin{document}
\title[Commutative algebras of Fractional Differential Operators]
{A Burchnall--Chaundy--Krichever Theory for \\ Fractional Differential Operators}

\author[casper]{W. Riley Casper}
\address{Department of Mathematics, California State University Fullerton, CA 92831, USA}
\email{\href{wcasper@fullerton.edu}{wcasper@fullerton.edu}}
\author[horozov]{Emil Horozov}
\address{
Department of Mathematics and Informatics, Sofia University,
5. J. Bourchier Blvd., Sofia 1126, and, 
Institute of Mathematics and Informatics, 
Bulg. Acad. of Sci., Acad. G. Bonchev Str., Block 8, Sofia 1113, Bulgaria}
\email{\href{horozov@fmi.uni-sofia.bg}{horozov@fmi.uni-sofia.bg}}
\author[iliev]{Plamen Iliev}
\address{School of Mathematics, Georgia Institute of Technology, Atlanta, GA 30332, USA}
\email{\href{iliev@math.gatech.edu}{iliev@math.gatech.edu}}
\author[yakimov]{Milen Yakimov}
\address{Department of Mathematics, Northeastern University, Boston,
MA 02115 \\
USA}
\email{\href{m.yakimov@northeastern.edu}{m.yakimov@northeastern.edu}}
\thanks{The research of W.R.C. was supported by an RSCA intramural grant 0359121 from CSUF. E.H. was supported by Bulgarian Science Fund grant DN02/05. P.I. was supported by Simons Foundation grant \#635462.  M.Y. was supported by NSF grants DMS-1901830 
and DMS-2131243.}
\subjclass[2010]{Primary: 16S32: Secondary: 16U20, 47G30, 14E18}
\keywords{Algebras of commuting differential operators, Burchnall--Chaundy theorem, Sato's Grassmannian, spectral field, Krichever's correspondence, jet bundles}

\begin{abstract}
Fractional differential (and difference) operators play a role in a number of diverse settings, including integrable systems, 
mirror symmetry, Hurwitz numbers, the Bethe ansatz equations. 
We prove extensions of the three major results on algebras of commuting (ordinary) differentials operators
to the setting of fractional differential operators: (1) the Burchnall--Chaundy theorem that a pair of 
commuting differential operators is algebraically dependent, 
(2) the Krichever correspondence classifying maximal commutative algebras of differential operators 
by an algebro--geometric construction (in the rank 1 case) and 
(3) the characterization of such algebras in terms of Sato's Grassmannian.
Unlike the available proofs of the Burchnall--Chaundy theorem which use the action of one differential operator 
on the kernel of the other, our extension to the fractional case uses bounds on orders of fractional differential 
operators and growth of algebras. This also presents a new and much shorter proof of the original result. 
The second main theorem is achieved by developing a new tool of the spectral field of a point in Sato's Grassmannian, 
which carries more information than the widely used notion of spectral curve of a KP solution.
Our Krichever type correspondence for commutative algebras of fractional differential operators of rank 1 is based on infinite jet bundles.
\end{abstract}

\maketitle
\section{Introduction}
\subsection{Commutative algebras of differential operators}
Algebras of commuting ordinary differential operators play a fundamental role in many areas of mathematics 
and mathematical physics,
ranging from integrable systems \cite{Segal-Wilson,vanMoerbeke} to algebraic geometry, where the underlying algebro-geometric 
structures were used in the solution of the Schottky problem \cite{Shiota}. 
From now on, for brevity, by a differential operator we will mean an ordinary differential operator. 

There are three fundamental results on commutative algebras of differential operators:
\begin{enumerate}
\item[(Thm1)] The {\em{Burchnall--Chaundy theorem}} \cite{BC} (from almost 100 years ago) establishes that every two 
commuting differential operators, at least one of which is of positive order, are algebraically dependent.
\item[(Thm2)] The {\em{Krichever correspondence}} gives an explicit construction of all maximal commutative algebras of differential 
operators in terms of algebro-geometric data: a projective curve $X$, a maximally torsion free invertible sheaf $\mathcal L$ on $X$, 
a smooth point $\infty$ on $X$, a parametrization of a neighborhood of $\infty$ and the invertible sheaf on it, and some additional 
data in the higher rank case. 
The construction is due to Krichever in the rank 1 smooth case \cite{Krichever1}. It was extended to the singular rank 1 case 
by Mumford \cite{Mumford} and to higher rank by Krichever \cite{Krichever2}.
The rank of an algebra of commuting differential operators is defined to be the dimension of the space of common eigenfunctions 
for generic eigenvalues. It is well known to be equal to the 
greatest common divisor of the orders of the operators in it.
\item[(Thm3)] {\em{Sato's theory}} \cite{Sato} parametrizes the solutions of the Kadomtsev--Petviashvili (KP) hierarchy in terms of the 
points of an infinite dimensional Grassmannian $\Gr$, called Sato's Grassmannian. To every plane $W \in \Gr$, 
one associates a spectral curve and a commutative algebra of ordinary differential operators $\cA_W$, which is 
isomorphic (as an algebra) to the coordinate ring $A_W$ of the spectral curve. A formal version of Krichever's construction of Baker--Akhiezer functions
\cite{Krichever1} gives the following characterization of maximal commutative algebras of differential operators:
up to a change of variable and a conjugation by a function, all maximal commutative algebras of differential 
operators are of the form $\cA_W$ for a plane $W \in \Gr$ with non-trivial spectral algebra $A_W$, i.e. $A_W \neq \mathbb C$. 
\end{enumerate}

A {\em{fractional differential operator}} is an operator that can be represented in the form $P Q^{-1}$ for two differential operators $P$ and $Q$. 
In more conceptual terms a fractional differential operator is an element of the {\em{skew field of fractions
of the algebra of differential operators}} (which is well known to be an Ore domain \cite{Krichever3}), see \cite[Ch. 6]{GW}
for background on Ore domains and skew field of fractions. 

In the last 25 years there has been a great interest in fractional differential and difference operators from diverse points of view, 
a partial list of which is as follows:
\begin{enumerate} 
\item[(a)] They form the phase space of Krichever's \cite{Krichever3} rational reductions of the KP hierarchy
(also called the constrained KP hierarchies), which contain as very special cases all Gelfand--Dickey reductions. 
These integrable hierarchies have been in turn much studied, see e.g. \cite{Dickey,HvdL}. 
\item[(b)] In \cite{BH} it was proved that the generating functions of weighted Hurwitz numbers are given in terms of constraint KP $\tau$-functions whose 
Baker--Akhiezer functions are the Meijer $G$-functions (which are eigenfunctions of fractions of two hypergeometric operators). 
\item[(c)] A reproduction procedure for constructing new solutions of the Bethe ansatz equation for the Gaudin model for the Lie superalgebra $\mathfrak{gl}(m|n)$ 
out of old ones was discovered in \cite{HMVY}, where it was shown that in the fermionic picture the reproduction is governed by fractional differential operators 
(which in \cite{HMVY} are called rational pseudodifferential operators). 
\item[(d)] Rational reductions of the 2D Toda hierarchy were defined and studied in \cite{BCRR} and related to Frobenius structures and mirror symmetry; 
the full descendent all-genus Gromov–Witten potentials of certain crepant resolutions were conjectured to be 
given by $\tau$-functions of the reduction.
\end{enumerate}
However, not much is known about the structure of algebras of commuting fractional differential operators, which are certainly in 
the background of these results. The known proofs of (Thm1)--(Thm3) 
do not generalize to the case of fractional differential operators and no such generalizations are currently known.

In this paper we obtain extensions of (Thm1)--(Thm3) to commutative algebras of fractional differential operators, with the exception of the higher rank case 
of (Thm2). The theorems are proved in full generality without any additional assumptions.

{\bf{Setting:}} If a fractional differential operator of nonzero order has analytic coefficients in a neighborhood of some point in $\bbc$ and invertible leading coefficient, then by a change of variable we can assume that its leading coefficient is $1$, and the remaining coefficients are analytic in a neighborhood of $0$. Furthermore, conjugating by a function, we can make the sub-leading coefficient $0$. Thus, without any restriction, we can assume that the fractional differential operator is in {{\em{normalized form}}, which means that the leading coefficient is $1$, the sub-leading coefficient is $0$, and the coefficients are analytic in a neighborhood of $0$. We can relax the analyticity condition, and from now we will work with fractional differential operators with coefficients in $\bbc[[x]]$.

\subsection{A Burchnall--Chaundy theorem for fractional differential operators}
The known proofs of the Burchnall--Chaundy theorem use the action of one of the commuting 
differential operators on the kernel of the other. This approach is not applicable to fractional differential operators 
as there is no analog of kernel that can be used in this fashion. 

We first give a new proof of the Burchnall--Chaundy theorem that relies on an upper bound of the order of any polynomial 
in two commuting differential operators and uses it to bound the growth of the algebra formed by them.
It is much shorter than the known proofs of the theorem. We then obtain upper and lower bounds 
on the order of any polynomial in two commuting fractional differential operators. The lower 
bound requires a fundamentally new idea that relies on the denominatorial order 
of a fractional differential operator which is defined to be the minimal order 
of a right denominator for it. These bounds are used for an algebra growth estimate 
which in turn leads to the following:
\medskip

\noindent
{\bf{Theorem A.}}
{\em{Let $ P(x,\partial_x)$ and $ Q(x,\partial_x)$ be commuting fractional differential operators, where $P$ is monic of nonzero order and $Q$ is not a constant.
Then $P(x,\partial_x)$ and $ Q(x,\partial_x)$ are algebraically dependent.}}
\medskip

Theorem A excludes only the case of pairs $(P,Q)$ such that both $P$ and $Q$ have order $0$ with algebraically independent 
leading coefficients. Obviously, if both operators have order 0, they may not be functionally dependent, e.g. take $P=1+x$ and $Q=e^x$.

\subsection{A characterization in terms of Sato's theory} 
Next we obtain a characterization of maximal algebras of commuting 
fractional differential operators in terms of Sato's theory. 
(We work with the formal Sato Grassmannian, see \cite{vanMoerbeke} for background, and not with the 
analytic Segal-Wilson Grassmannian \cite{Segal-Wilson}.)
The pivotal 
ingredient is a new notion of {\em{spectral field}} of a plane $W \in \Gr$ 
which is of independent interest for the study of Sato's Grassmannian 
and the dynamics of the KP flows.
Recall that the {\em{spectral algebra}} of a plane $W \in \Gr$ is defined by 
\[
A_W = \{f(z)\in \bbc((z^{-1})) : f(z)W\subseteq W\}.
\]
Define the {\em{spectral field}} of a plane $W \in \Gr$ by 
\[
K_W = \{f(z)\in \bbc((z^{-1})): \dim (W + f(z)W)/W < \infty\}.
\]
Also define the {\em{rank}} of $W\in\Gr$ to be the dimension of the $K_W$-subspace of $\bbc((z^{-1}))$ spanned by $W$.
We investigate in detail the structure of $K_W$, showing among other things the following:
\begin{enumerate}
\item[(a)] $K_W$ is a field;
\item[(b)] If $W \in \Gr$ is a plane with nontrivial spectral algebra $A_W \neq \bbc$, 
then $K_W$ is the fraction field of $A_W$.
\item[(c)] The rank of $W$ is finite if and only if its spectral field $K_W$ is nontrivial, i.e. $K_W \neq \bbc$.
\end{enumerate}
We also give an example of a plane $W \in \Gr$ for which $A_W = \bbc$ and $K_W$ has transcendence 
degree 1 over $\bbc$. To each $W \in \Gr$, we associate a field of fractional differential operators ${\mathscr{K}}_W$
which is isomorphic to $K_W$ as follows.  
If the Baker--Akhiezer function of $W$ is given by
\[
\psi_W(x,z) = \Big(1 + \sum_{j=1}^\infty u_j(x)z^{-j}\Big)e^{xz}
\]
(see Sect. \ref{2.1} for details), then we set
\begin{equation}
\label{scrKW}
{\mathscr{K}}_W=\{U(x,\partial_x)f(\partial_x)U(x,\partial_x)^{-1}: f(z)\in K_W\},
\end{equation}
where $U(x,\partial_x)= 1 + \sum_{j=1}^\infty u_j(x)\partial_x^{-j}$. As common in Sato's theory, the expression for $\psi_W(x,z)$ 
is viewed formally as a $\mathcal{D}$-module and not as a product of convergent series in $z^{-1}$ and $z$.
\medskip

\noindent
{\bf{Theorem B.}}
{\em{Let $W$ be a plane in the big cell $\Gr_+(0)$ of Sato's Grassmannian. If $K_W \neq \bbc$, then ${\mathscr{K}}_W$
is a maximal commutative algebra of fractional differential operators.

All maximal commutative algebras of fractional differential operators containing a normalized fractional differential operator $L(x, \partial_x)$ of nonzero order
arise in this way.
}}
\medskip

While (Thm3) is easily derived from Krichever's construction and results on Baker--Akhiezer functions \cite{Krichever1}, this is not the case with 
Theorem B. It requires a series of new results on the Sato's Grassmannian and the spectral fields of its planes. In turn, Theorem B
plays a key role in our extension of (Thm2) to fractional differential operators. 

\subsection{A Krichever type correspondence for commutative algebras of fractional differential operators of rank one}
We start with the following data:
\begin{enumerate}
\item[(i)] An algebraic curve $X$; 
\item[(ii)] A smooth point $p\in X$; 
\item[(iii)] A line bundle $\mathcal L$ over $X$ with trivial cohomology;
\item[(iv)] A local coordinate $z^{-1}$ of $X$ in an analytic neighborhood $U$ of $p$; 
\item[(v)] A local trivialization $\varphi: \mathcal L\rightarrow \mathcal O_X(-1)$ over $U$.
\end{enumerate}
However, this data from the classical machinery is not sufficient 
to construct in an algebro-geometric fashion all commutative algebras of fractional differential operators. 
We add a key additional ingredient to it associated to semi-infinite jet bundles, dealt with in the algebro-geometric category.
Denote by $\mathcal J^m(\mathcal L)$
the bundle of $m$-jets of $\mathcal L$ and consider the directed system of sheaves defined by the push-forward maps
$\pi_{m,\ell}: \mathcal J^m(\mathcal L)\rightarrow\mathcal J^\ell(\mathcal L)$, $m \geq \ell \geq 0$. 
The {\em{infinite jet bundle}} of $\mathcal L$ is the sheaf on $X$ given by projective limit
\[
\mathcal J^\infty(\mathcal L) = \varprojlim_m \mathcal J^m(\mathcal L).
\]
The maps $\pi_{m,\ell}$ have canonical splittings $\iota_{m,\ell}: \mathcal J^\ell(\mathcal E)\rightarrow\mathcal J^m(\mathcal E)$ and we can consider the colimit
\[
\mathcal J^{\infty,0}(\mathcal L) = \varinjlim_m \mathcal J^m(\mathcal L).
\]
which we call the {\em{semi-infinite jet bundle}} of $\mathcal L$. There is a canonical map $j^\infty:  \Gamma(U,\mathcal L) \to \Gamma(U,\mathcal J^\infty(\mathcal L))$, 
called the {\em{$\infty$-jet of the section $s$}} over an open subset $U$ of $X$, defined in \eqref{j_infty}.

The paper can be read without prior knowledge of jet bundles. For the convenience of the reader, 
in the appendix we describe all constructions on finite and infinite jet bundles that are needed for the paper, 
based on minimal algebro-geometric background from parts of \cite{Hartshorne}.

Let $p$ be a smooth point of the curve $X$ as in condition (ii).
We define a {\em{rational section}} of the dual of the semi-infinite jet bundle $\mathcal J^{\infty,0}(\mathcal L)$ over an open subset $U\subseteq X$ 
to be a section $\chi$ of the dual of $\mathcal J^{\infty,0}(\mathcal L)$ over a formal annulus $Z_p$ of $p$, 
which extends to a section of the dual of $\mathcal F$ over $X\diff\{p\}$ for a submodule $\mathcal F$ of $\mathcal J^{\infty,0}(\mathcal L)$ of finite codimension,
see Definition \ref{fract-sect} and Sect. \ref{5.2} for details. The following theorem describes our generalized Krichever correspondence 
for commutative algebras of fractional differential operators of rank 1.
\medskip

\noindent
{\bf{Theorem C.}}
{\em{
Consider a hextuples $(X, p,\mathcal L, z^{-1},\varphi,\chi)$ where the first 5 components of the datum satisfy (i)-(v) above (classical part of the datum) and the last one is
\begin{enumerate}
\item[(vi)] $\chi$, a rational section of the dual of the semi-infinite jet bundle over $X\diff\{p\}$
\end{enumerate}
(non-classical part of the datum). Then
\[
W = \{\varphi(\chi(j^\infty(s))): s\in \Gamma(Z_p,\mathcal L)\}
\]
defines a rank $1$ element of the big cell $\Gr_+(0)$ of Sato's Grassmannian. Furthermore every rank $1$ element of $\Gr_+(0)$ arises in this way.
}} 
\medskip

We note that by Theorem B, the plane $W\in \Gr_+(0)$ constructed in Theorem C gives rise to the maximal algebra $\mathscr K_W$ of commuting fractional differential operators, 
defined in \eqref{scrKW}. This produces an algebro-geometric construction of all rank 1 maximal algebras of commuting differential operators.

The algebras of commuting fractional differential operators of rank $>1$ are vastly more complicated than those in the differential case. 
In the latter case Krichever obtained an algebro-geometric classification in \cite[Theorem 2.3]{Krichever2}, but
a possible extension of Theorem C to the higher rank case appears to be very difficult
and one needs substantial additional data in the classification of those algebras.

Finally, we obtain the following important corollary from the proof of Theorem C:
\medskip

\noindent
{\bf{Corollary D.}}
{\em{
If $\mathscr A$ is a rank $1$ commutative algebra of fractional differential operators, then there exists a commutative algebra
of differential operators $\mathscr A_0$ and  a fractional differential operator $D(x,\partial_x)$ such that
$$D(x,\partial_x)\mathscr A D(x,\partial_x)^{-1} \subseteq \{L_1(x,\partial_x)^{-1}L_2(x,\partial_x): L_j(x,\partial_x)\in\mathscr A_0,\ j=1,2\}.$$
}}
It is easy to show that the statement of the corollary does not hold for higher rank commutative algebras of 
fractional differential operators.

In a forthcoming publication we will use the constructions in Theorem C to describe explicitly in algebro-geometric terms the evolution of the flows of the constraint 
KP hierarchy of Krichever \cite{Krichever3}. This is not a straightforward application of Theorem C and requires a number of additional arguments.

Fractional differential operators also give rise to an extension of the Duistermaat--Gr\"unbaum bispectral problem \cite{DG} where 
both spectral equations are replaced with generalized eigenvalue problems. The results in this paper can be used to 
classify all rank one solutions of the problem, leading to a fractional version of the classical Wilson's adelic Grassmannian defined in \cite{W1,W2}.
(We note that the classifying variety is different from the union of the quiver varieties in \cite{BGK} generalizing the Calogero--Moser strata of the  
Wilson's adelic Grassmannian.) This will be described in a forthcoming publication.  

In \cite{BW} it was proved that the orbits of the action of the automorphism group of the 
first Weyl algebra on the set of its one-sided ideals can be identified with the Calogero--Moser strata of Wilson's adelic Grassmannian. 
It is not clear to us precisely which algebra should be used in place of the first Weyl algebra to extend this orbit-correspondence to the case 
of fractional differential operators.

\medskip
\noindent
{\bf Acknowledgement.} We are grateful to Igor Krichever whose detailed comments and suggestions were of great help to us in improving the presentation
of the paper.
\section{Algebras of commuting differential operators and Sato's Grassmannian}
\subsection{The Burchnall--Chaundy theorem and Sato's Grassmannian}
\label{2.1}
The systematic study of commuting algebras of differential operators goes back to the seminal work of Burchnall and Chaundy \cite{BC}.  
Given a pair of commuting differential operators $L_1(x,\partial_x)$ and $L_2(x,\partial_x)$, Burchnall and Chaundy consider the \emph{simultaneous} action of $L_1$ and $L_2$ 
on the eigenspaces of $L_1$.  Their analysis constructs a nonconstant polynomial $F(z,w)$ with $F(L_1,L_2)$ acting trivially on each eigenspace and thus is identically zero, leading to the following theorem:
\begin{thm}[Burchnall--Chaundy \cite{BC}]\label{bctheorem}
Every pair of commuting differential operators is algebraically dependent.
\end{thm}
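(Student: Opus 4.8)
The plan is to discard the classical kernel-action argument entirely and instead pit an upper bound on the growth of the commutative algebra $\cA = \bbc[L_1,L_2]$ against a lower bound that algebraic independence would force. After the normalizations of the Setting we may assume $L_1,L_2$ are monic with vanishing sub-leading coefficients. We may also assume at least one operator has positive order, say $m := \ord(L_1) \ge 1$, since two order-zero operators are just functions and form the genuine exception already flagged in the introduction (and in Theorem~A). If $L_2$ had order $0$, then, commuting with the monic positive-order operator $L_1$, it would have vanishing derivative and hence be a scalar, making the conclusion trivial; so we may also take $n := \ord(L_2) \ge 1$. For each $N$ write $\cA_{\le N}$ for the $\bbc$-vector space of elements of $\cA$ of order at most $N$.

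First I would prove the linear upper bound $\dim_\bbc \cA_{\le N} \le N+1$. Since every element of $\cA$ commutes with $L_1$, it suffices to bound the centralizer $Z(L_1)$. The pivotal lemma, essentially Schur's observation, is that if $P \in Z(L_1)$ has order $k$ with leading coefficient $p_k$, then $p_k$ is a constant: because $L_1$ is monic with vanishing sub-leading term, the unique surviving contribution to $[P,L_1]$ in order $k+m-1$ is $-m\,p_k'\,\partial_x^{k+m-1}$, which must vanish, forcing $p_k' = 0$. Consequently the order filtration on $Z(L_1)$ has at most one-dimensional graded pieces: if $P,P' \in Z(L_1)$ both have order $k$, then a scalar combination $P - c P'$ drops the order while remaining in $Z(L_1)$, so $\dim\bigl(Z(L_1)_{\le k}/Z(L_1)_{\le k-1}\bigr) \le 1$. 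Summing over $k = 0,\dots,N$ gives $\dim_\bbc \cA_{\le N} \le \dim_\bbc Z(L_1)_{\le N} \le N+1$.

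Next I would produce the competing quadratic lower bound. The order of a product of differential operators is the sum of the orders, since leading coefficients multiply and remain nonzero; hence $\ord(L_1^a L_2^b) = am + bn$. Were $L_1$ and $L_2$ algebraically independent, the monomials $\{L_1^a L_2^b : a,b \ge 0\}$ would be linearly independent in $\cA$. All those with $am + bn \le N$ lie in $\cA_{\le N}$, and the number of lattice points $(a,b) \in \bbz_{\ge 0}^2$ with $am + bn \le N$ grows like $N^2/(2mn)$. For $N$ large this exceeds $N+1$, contradicting the upper bound. Therefore the monomials cannot be independent, and $L_1,L_2$ satisfy a nonzero polynomial relation $F(L_1,L_2) = 0$.

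The main obstacle is the constant-leading-coefficient lemma that powers the linear upper bound; everything after it is bookkeeping with orders and a lattice-point count. I would take care to confirm that the normalization (monic, vanishing sub-leading coefficient) is exactly what isolates $-m\,p_k'\,\partial_x^{k+m-1}$ as the sole order-$(k+m-1)$ term of $[P,L_1]$, with the remaining terms of $L_1$ and the lower-order part of $P$ contributing only in orders $\le k+m-2$. It is precisely this upper-bound mechanism whose fractional analog is subtle: as indicated in the introduction, the passage to $P Q^{-1}$ destroys the simple additivity of orders and forces one to control growth from below through the denominatorial order, whereas in the classical setting the one-sided centralizer estimate suffices.
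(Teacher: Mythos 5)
Your proposal is correct and follows essentially the same route as the paper's own second proof: a linear upper bound on the dimension of the commutative algebra filtered by order, coming from the fact that any operator commuting with a monic operator has constant leading coefficient, pitted against the quadratic growth of monomials that algebraic independence would force. The only differences are presentational: you prove the constant-leading-coefficient lemma by an explicit commutator computation (where, in fact, the sub-leading term of $L_1$ cancels on its own, so monicity alone suffices and the vanishing sub-leading coefficient is not needed), whereas the paper simply asserts it, and you treat the order-zero edge cases that the paper's monicity hypothesis silently absorbs.
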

We give a second proof of this theorem in Sect. \ref{bcsect} based on a completely different approach.  

The Burchnall--Chaundy theorem shows that pairs of commuting differential operators $(L_1(x,\partial_x), L_2(x,\partial_x))$
have an associated irreducible algebraic curve $X=\{(z,w)\in\bbc^2: F(z,w)=0\}$, called the Burchnall--Chaundy curve.
It consists of all pairs $(z,w) \in \bbc$ such that $f(z,w)=0$ for all $f \in \bbc[z,w]$ satisfying $f(L_1,L_2)=0$.
We also refer to this curve as to the \vocab{spectral curve} of the commuting operators, since it is parametrized by the joint eigendata of the commuting operators.
More precisely, we may construct a family of joint eigenfunctions 
$\psi(x,z)$ satisfying $L_i(x,\partial_x)\cdot\psi(x,z) = \lambda_i(z)\psi(x,z)$ for some functions $\lambda_1(z)$ and $\lambda_2(z)$.
The algebraic relation between $L_1(x,\partial_x)$ and $L_2(x,\partial_x)$
implies that $F(\lambda_1(z),\lambda_2(z))=0$, so $z\mapsto (\lambda_1(z),\lambda_2(z))$ parametrizes $X$.

The modern theory of commuting differential operators includes a complete classification of the algebras of commuting differential operators in terms of an infinite-dimensional space 
called \vocab{Sato's Grassmannian} $\Gr$.
Informally, algebras of commuting differential operators are enlarged to algebras of commuting formal pseudodifferential operators and this data 
is recorded by the asymptotic expansions at infinity of their common eigenfunctions, which uniquely determine the initial commutative algebras.  
The points of $\Gr$ are realized as certain subspaces of the space of formal Laurent series in $z^{-1}$
\[
\mathbb L = \bbc((z^{-1})).
\]
It has a natural decomposition as 
\[
\mathbb L = \mathbb L_+\oplus \mathbb L_-,  \quad \mbox{where} \quad \mathbb L_+ = \bbc[z], \mathbb L_- = z^{-1}\mathbb[[z^{-1}]].
\]
Let $\pi_+: \mathbb L\rightarrow\mathbb L_+$ be the associated projection map.
Sato's Grassmannian is
$$\Gr = \{W\subseteq \mathbb L: \dim(\ker\pi_+|_W) <\infty,\ \ \dim(\coker\pi_+|_W)<\infty\}.$$
Mostly, we will be concerned with the big cell of index $0$, denoted $\Gr_+(0)$, consisting of $W\in\Gr$ wherein $\pi_+|_W$ is an isomorphism.

The interpretation of $\Gr$ as a classifying space for commutative algebras of differential operators uses formal pseudo-differential operators with coefficients in $\bbc[[x]]$, i.e.
formal sums of the form
\[
U(x,\partial_x) = \sum_{j=-\infty}^n u_j(x)\partial_x^j, \quad u_j(x) \in \bbc[[x]].
\]
The set $\mathcal P$ of pseudodifferential operators with coefficients in $\bbc[[x]]$ forms an algebra with product rule
\[
\partial_x^k u(x) = \sum_{j=0}^\infty \binom{k}{j} u^{(j)}(x) \partial_x^{k-j}, 
\]
which contains the algebra $\mathcal D$ of differential operators with coefficients in $\bbc[[x]]$ as a subalgebra. 
The invertible operators in $\mathcal P$ are precisely the ones whose leading terms are invertible elements of $\bbc[[x]]$. As first observed by Schur \cite{Schur}, any normalized pseudodifferential operator $L(x,\partial_x)$ of order $m\neq 0$ can be conjugated by a monic pseudodifferential operator $U(x,\partial_x)$ of order $0$ 
into $\partial_x^m$:
\begin{equation}
\label{Schur}
L(x,\partial_x) = U(x,\partial_x) \partial_x^m U(x,\partial_x)^{-1}, \; \; \mbox{where} \; \; U(x,\partial_x) = 1 + \sum_{j=1}^{\infty} u_j(x)\partial_x^{-j}.
\end{equation}
This implies that the centralizers of differential operators are necessarily commutative algebras.

We can endow $\mathbb L$ with a right $\mathcal P$-module structure by identifying $\mathbb L$ with $\mathcal P/x\mathcal P$, where $z^j$ represents the equivalence class of $\partial_x^j$.
This action satisfies 
\[
z^j\cdot \partial_x^m = z^{j+m} \; \; \mbox{and} \; \; z^j\cdot x^k = k!\binom{j}{k}z^{j-k}, \quad \forall j, m \in \bbz, k \in \bbn.
\]
Here and below 
\[
\bbn= \{0, 1, \ldots \}.
\]
Under this action, any $W\in \Gr_+(0)$ has an expression of the form $W = \mathbb L_+\cdot U(x,\partial_x)$ for some monic pseudodifferential operator of order $0$, $U(x,\partial_x)= 1 + \sum_{j=1}^{\infty} u_j(x)\partial_x^{-j}$.
The corresponding family of eigenfunctions is 
\[
\psi_W(x,z) = \Big(1 + \sum_{j=1}^\infty u_j(x)z^{-j}\Big)e^{xz}, 
\]
and is called the \vocab{(stationary) Baker--Akhiezer function} of $W$.

The pivotal algebraic construction allowing us to relate $W$ to a differential operator is a certain commutative algebra $A_W$, called the \vocab{spectral algebra} of $W$ and defined by
$$A_W = \{f(z)\in\mathbb L: f(z)W\subseteq W\}.$$
It gives rise to the commutative algebra of differential operators
\[
\cA_W = \{U(x,\partial_x)f(\partial_x)U(x,\partial_x)^{-1}: f(z)\in A_W\}.
\]
The Baker--Akhiezer function $\psi_W(x,z)$ is a family of joint eigenfunctions for this algebra.
In general $A_W$ is defined for any point in $\Gr$, but typically $A_W=\bbc$.
Later, we will introduce a novel extension of this fundamental concept, associating a certain commutative field $K_W$ with each point $W$, which we call the spectral field of $K_W$.
The extension $K_W$ is the fraction field of $A_W$ when $\bbc\subsetneq A_W$, but notably $K_W$ may be larger than $\bbc$ when $A_W=\bbc$.

The algebra of differential operators defined by the previous paragraph is maximal in the sense that it is the centralizer of a differential operator.
More generally we can consider Schur pairs $(W,A)$ consisting of a point $W\in\Gr_+(0)$ and a subalgebra $A$ of $A_W$.
Any commutative algebra of differential operators will be a subalgebra of the centralizer of one of its elements; thus Schur pairs $(W,A)$ classify all algebras of differential operators
in the sense that each such algebra is of the form 
\[
\{U(x,\partial_x)f(\partial_x)U(x,\partial_x)^{-1}: f(z)\in A\}.
\]
for a Schur pair $(W,A)$.

\subsection{The KP hierarchy}
\label{2.2}
Sato's Grassmannian is endowed with an infinite family of compatible flows called \vocab{KP (Kadomtsev-Petviashvili) flows}, which form a deep connection between 
algebras of commuting differential operators and integrable systems.
The KP flows are defined in terms of the infinite collection of commuting vector fields $\{X_n\}_{n=1}^\infty\subseteq T\Gr_+(0)$ defined by 
\[
X_{n,W}: W\mapsto \mathbb L/W,\ \ v(z)\mapsto z^nv(z).
\]
Here, the tangent space at a point $W\in\Gr_+(0)$ is given analogously to the finite-dimensional Grassmannians by $T_W\Gr_+(0) = \Hom_{\bbc}(W,\mathbb L/W)$.  

The KP flow $W(\vec t )$ corresponding to the $n$-th vector field satisfies the infinite system of partial differential equations 
$\frac{\partial}{\partial t_n}W(\vec t) = z^nW(\vec t)$ for $\vec t= (t_1,t_2,\dots)$.
If we define the (unique) pseudodifferential operators $U(x,\partial_x;\vec t)$ by $W(\vec t) = \mathbb L_+\cdot U(x,\partial_x;\vec t)$ and set
\[
L(x,\partial_x;\vec t) := U(x,\partial_x;\vec t)\partial_x U(x,\partial_x;\vec t)^{-1} = \partial_x + \sum_{j=1}^\infty a_j(x;\vec t)\partial_x^{-j}
\]
(following Schur's result \eqref{Schur}), 
then the coefficients $a_j(x;\vec t)$ satisfy an infinite system of nonlinear partial differential equations called the KP hierarchy.
The KP hierarchy may also be conveniently expressed in terms of a commutation relation called the \vocab{Lax formulation}
\begin{equation}\label{lax equation}
\frac{\partial}{\partial t_n} L(x,\partial_x;\vec t) = [(L(x,\partial_x;\vec t)^n)_+,L(x,\partial_x;\vec t)],
\end{equation}
where here $Q(x,\partial_x)_+$ denotes the differential component of a pseudodifferential operator $Q(x,\partial_x)$.

\subsection{The Krichever correspondence}
\label{2.3}
As described in \S \ref{2.1}, commuting differential operators define algebraic curves.
Krichever's construction \cite{Krichever1} is an algebro-geometric construction that allows us to go in the opposite direction and associate a commutative algebra of differential operators to an algebraic curve.
In this way, we can establish a correspondence between certain geometric data on algebraic curves and commutative algebras of differential operators.
While this correspondence works in general, we will first focus on the case of nonsingular curves.

To begin, we consider tuples of data of the from $(X,{\infty},z,D)$, where
\begin{enumerate}
\item[(i)]    $X$ is a compact Riemann surface of genus $g$;
\item[(ii)]   ${\infty}\in X$ is  a fixed point, and $z^{-1}$ is a local parameter near ${\infty}$;
\item[(iii)]  $D=P_1+\cdots +P_g$ is a non-special divisor on $X\setminus {\infty}$.
\end{enumerate}
To determine a commutative algebra of differential operators corresponding to $(X,\infty,z,D)$, it suffices to construct a Schur pair $(W,A)$.
To obtain $W$, we associate a Baker--Akhiezer function to this quadruple.

The stationary Baker--Akhiezer function corresponding to $(X,{\infty},z,D)$ is the unique
function $\psi(x,\cdot )$ on $X$, having the following two properties:
\begin{enumerate}
\item[(a)]  It is a meromorphic function on $X\setminus \{\infty\}$ with poles 
at $P_1,\dotsc,P_g$;
\item[(b)] Near ${\infty}$ it has the form 
\begin{equation*}
\psi(x,z)=\Bigg(1+\sum_{j=1}^{\infty}\frac{u_j(x)}{z^j}\Bigg)e^{xz}.
\end{equation*}
\end{enumerate}
The associated pseudodifferential operator $U(x,\partial_x) = 1 + \sum_{j=1}^\infty u_j(x)\partial_x^{-j}$ defines a point $W=\mathbb L_+\cdot U(x,\partial_x)$ of $\Gr_+(0)$.
Moreover, the algebra $A$ of holomorphic functions on $X\setminus\{\infty\}$ with a pole at $\infty$ is a subalgebra of $A_W$, and thus $(W,A)$ is a Schur pair giving rise 
to the algebra of commuting differential operators
\begin{equation}
\label{Schur-to-CAode}
\{U(x,\partial_x)f(\partial_x)U(x,\partial_x)^{-1}: f(z)\in A\}.
\end{equation}
The Baker--Akhiezer function defined in the above way coincides with the one from \S \ref{2.1}.
The construction of the above algebra of commuting differential operators 
from the data (i--iii) is called the {\bf{Krichever correspondence}}. Under it, KP flows correspond to orbits of quadruples $(X,\infty,z,D)$ under the natural action by the Jacobian of $X$.
As such, the associated solutions of the KP hierarchy naturally have expressions in terms of algebraic functions on $X$.

Mumford \cite{Mumford} extended the Krichever correspondence to singular projective curves.
A Krichever quintuple $(X,\mathcal L,\infty, t,\varphi)$ consists of the following data:
\begin{enumerate}
\item[(i')] A projective curve $X$; 
\item[(ii')] A maximally torsion free invertible sheaf $\mathcal L$ on $X$; 
\item[(iii')] A smooth point $\infty$ on $X$; 
\item[(iv')] An isomorphism $t : \mathbb D(\epsilon) := \{z\in\bbc: |z|<\epsilon\} \rightarrow U_\infty$ to a neighborhood $U_\infty$ of $\infty$;
\item[(v')] An $\mathcal O_{U_\infty}$-module isomorphism $\varphi: \mathcal L_{U_\infty}\rightarrow t_* \mathcal O_{\mathbb D}(-1)$.
\end{enumerate}
A Krichever quintuple defines a unique point $W\in\Gr_+(0)$ defined by 
\[
W = t^* \varphi(\mathcal L(U_\infty\diff\{\infty\}))
\]
along with a subalgebra $A$ of $A_W$ by 
\[
A = t^*\mathcal O_X(U_\infty \diff\{\infty\})
\]
to which we associate the algebra of commuting differential operators \eqref{Schur-to-CAode}.

The {\em{rank}} of a commutative algebra of differential operators is defined as the 
greatest common divisor of the orders of its elements.

\begin{thm}[Krichever \cite{Krichever1}, Mumford \cite{Mumford}]\label{ktheorem} The above constructions provides a
bijection between Krichever quintuples and rank one algebras of commuting differential operators.
\end{thm}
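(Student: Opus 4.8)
The plan is to prove that the two assignments are mutually inverse by splitting the argument into a \emph{forward} step (every Krichever quintuple yields a point $W\in\Gr_+(0)$ together with a Schur pair $(W,A)$ whose associated operator algebra is rank one) and a \emph{backward} step (every rank one algebra of commuting differential operators reassembles into a unique quintuple). Throughout I would keep careful track of how the rigidifying data, the local coordinate $z^{-1}$ and the trivialization $\varphi$, are encoded in the normalization of the Baker--Akhiezer function, since it is these extra choices that upgrade the correspondence from a correspondence of isomorphism classes to an honest bijection.

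For the forward step, fix a quintuple $(X,\mathcal L,\infty,t,\varphi)$. The key cohomological input is Riemann--Roch together with the degree/genericity conditions on $\mathcal L$ (which amount to the vanishing $H^0(X,\mathcal L)=H^1(X,\mathcal L)=0$): from these I would deduce that the map $\pi_+|_W$ attached to $W=t^*\varphi(\mathcal L(U_\infty\diff\{\infty\}))$ has finite-dimensional kernel and cokernel and is in fact an isomorphism, so $W$ lies in the big cell $\Gr_+(0)$. Because $\mathcal L$ is an $\mathcal O_X$-module, multiplication by any $f\in A=t^*\mathcal O_X(U_\infty\diff\{\infty\})$ preserves sections, whence $fW\subseteq W$ and $A\subseteq A_W$; thus $(W,A)$ is a Schur pair giving the algebra \eqref{Schur-to-CAode}. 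The rank one conclusion is the reflection of the invertibility of $\mathcal L$: since $\infty$ is a smooth point and $\mathcal L$ is locally free of rank one there, Riemann--Roch at $\infty$ shows that the pole orders of elements of $A$ fill out a cofinite subset of $\bbn$, so the gcd of the operator orders is $1$.

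For the backward step, start from a rank one algebra $\cA$ of commuting differential operators. By Schur's factorization \eqref{Schur} I conjugate one of its elements into normalized form, producing a point $W\in\Gr_+(0)$ and a subalgebra $A\subseteq A_W$, so that $\cA$ is of the form \eqref{Schur-to-CAode}. I would then let $X$ be the projective spectral curve, whose affine part is $\spec A$ (the Burchnall--Chaundy curve of \S\ref{2.1}) and to which $\infty$ is added as the single point at infinity, smooth precisely because $A$ contains elements of all sufficiently large pole orders. The sheaf $\mathcal L$ is the coherent $\mathcal O_X$-module $\wt W$ determined by the $A$-module $W$; the rank one hypothesis forces $\wt W$ to be torsion free of rank one, and the maximality of $W$ as a module over its spectral algebra translates into $\mathcal L$ being maximally torsion free. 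Finally, the normalization $\psi_W=\big(1+\sum_j u_j(x)z^{-j}\big)e^{xz}$, with leading coefficient $1$ and vanishing subleading coefficient, pins down the local coordinate $z^{-1}$ and the trivialization $\varphi$ uniquely, so the whole quintuple is recovered.

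The main obstacle is the backward step, and specifically the verification that the $A$-module $W$ sheafifies to a \emph{maximally torsion free invertible} sheaf on the possibly singular compactified spectral curve, and that this assignment is a two-sided inverse to the forward one. On singular curves the dictionary between rank one torsion free modules and line bundles is delicate, and this is exactly the point where Mumford's refinement of Krichever's construction is needed; in addition, certifying that $W$ lands in $\Gr_+(0)$ rather than in a shifted cell requires matching the arithmetic genus of $X$ against the index of $W$. I expect the bulk of the technical work to consist in confirming that these two numerical invariants agree and that the sheaf reconstructed from $W$ recovers the original $\mathcal L$ exactly, closing the loop between the two constructions.
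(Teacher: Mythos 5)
First, a point of reference: the paper does \emph{not} prove Theorem \ref{ktheorem} at all. It is stated as a classical result with citations to Krichever and Mumford, and Sect.~\ref{2.3} only describes the two constructions. So your proposal can only be measured against the standard arguments in the cited literature. Within that frame, your forward step is essentially right: a \v{C}ech computation for the cover of $X$ by $X\diff\{\infty\}$ and the formal disk at $\infty$ identifies $\ker(\pi_+|_W)\cong H^0(X,\mathcal L)$ and $\coker(\pi_+|_W)\cong H^1(X,\mathcal L)$, so cohomology vanishing puts $W$ in the big cell, and Riemann--Roch at the smooth point $\infty$ produces functions of every sufficiently large pole order, whence the gcd of the operator orders is $1$.

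The genuine gap is in your backward step, at exactly the place you flag and then wave off to ``Mumford's refinement.'' The sheaf $\wt W$ on the one-point completion $X=\spec(A)\cup\{\infty\}$ built from a rank one algebra is in general \emph{not} invertible, only torsion free of rank one, so the plan to produce a quintuple with an invertible $\mathcal L$ fails as written. Concretely, take $\cA=\bbc[\partial_x^2,\partial_x^3]$: this is a rank one commutative algebra of differential operators, its Schur pair is $W=\bbc[z]$, $A=\bbc[z^2,z^3]$, the completed spectral curve is a projective cuspidal cubic (smooth at $\infty$, cuspidal at the origin), and $W$ as an $A$-module requires two generators at the cusp, since $\dim_\bbc W/\mathfrak{m}W=2$ for $\mathfrak{m}=(z^2,z^3)$; hence $\wt W$ is not locally free there. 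So surjectivity onto all rank one algebras forces genuinely non-invertible torsion-free sheaves into the geometric data --- this is precisely Mumford's contribution, and it cannot be bypassed by the rank count you give (which only yields torsion-freeness and generic rank one). Relatedly, your sentence that ``the maximality of $W$ as a module over its spectral algebra translates into $\mathcal L$ being maximally torsion free'' is not an argument: no maximality hypothesis is available on the algebra side of the theorem, you never give the term ``maximally torsion free'' any content, and without pinning it down you cannot verify either injectivity or surjectivity of the correspondence (for instance, you cannot decide which quintuple, if any, is supposed to hit the non-maximal algebra $\bbc[\partial_x^2,\partial_x^3]$ above). These two intertwined issues --- local structure of $\wt W$ at singular points and the precise matching condition between the sheaf-theoretic and algebra-theoretic sides --- are where the actual Krichever--Mumford proof does its real work, and your outline defers rather than supplies it.
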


\section{Extending Burchnall--Chaundy to fractional differential operators}
\label{bcsect}
In this section we first give a new proof of the Burchnall--Chaundy Theorem \ref{bctheorem} that two commuting differential operators must satisfy an algebraic relationship.
The proof is simpler than the original proof and does not rely on actions of the operators on their respective kernels, which 
are unavailable in the case of fractional differential operators.
Consequently, we use this approach to obtain an extension of the Burchnall--Chaundy theory to fractional differential operators.
\subsection{A new approach to the Burchnall--Chaundy theorem}
\label{3.1}
\begin{proof}[A second proof of the Burchnall--Chaundy theorem \ref{bctheorem}]
Let $ P(x,\partial_x)$ and $ Q(x,\partial_x)$ be two commuting monic differential operators of orders $\ell$ and $m$, respectively.
For $N\geq 1$, let $S_N = \{ P(x,\partial_x)^i Q(x,\partial_x)^j: 0\leq i,j\leq N\}$.
Also for each $0\leq n \leq (\ell+m)N$ choose $ R_n(x,\partial_x)\in \vspan_\bbc S_N$ such that $ R_n(x,\partial_x)$ is monic of degree $n$ if it exists and $ R_n(x,\partial_x) =0$ otherwise.

Let $V = \vspan_\bbc\{ R_n(x,\partial_x): 0\leq n\leq (\ell+m)N\}$.
We claim that $V=\vspan_{\bbc}S_N$.
The inclusion $V\subseteq \vspan_{\bbc}S_N$ is obvious.
To prove the opposite direction, assume that $V\neq \vspan_\bbc S_N$.
Choose $ R(x,\partial_x)\in\vspan_{\bbc}S_N\diff V$ of smallest possible order $k$ and note that $k\leq (\ell+m)N$.
Since $ R(x,\partial_x)$ commutes with the monic operator $ P(x,\partial_x)$, it must have constant leading coefficient $\alpha\in\bbc\diff\{0\}$.
But then $ R(x,\partial_x)/\alpha$ is monic, so $ R_k(x,\partial_x)\neq 0$ and $ R(x,\partial_x)/\alpha - R_k(x,\partial_x)$ has order smaller than $k$.
By the minimality of $k$, it  follows that $ R(x,\partial_x)/\alpha - R_k(x,\partial_x)\in V$, but then $ R(x,\partial_x)\in V$, which is a contradiction.
This proves our claim.

As a consequence of the previous paragraph, we see that $\dim\vspan_\bbc S_N \leq \dim V \leq (\ell+m)N +1$.
Now if $ P(x,\partial_x)$ and $ Q(x,\partial_x)$ are algebraically independent, then the dimension of $\vspan_\bbc S_N$ is $(N+1)^2$, which grows quadratically with $N$.
Thus $ P(x,\partial_x)$ and $ Q(x,\partial_x)$ must be algebraically dependent.
\end{proof}

\subsection{An extension of the Burchnall--Chaundy theorem to fractional differential operators}
\label{3.2}
A key ingredient in the proof above is the upper bound on the order $\ord(F(P, Q))$ of a polynomial in $ P(x,\partial_x), Q(x,\partial_x)$.
In order to prove our extension of Burchnall and Chaundy's theorem to fractional differential operators, we need to also have a \emph{lower} bound on $\ord(F( P(x,\partial_x), Q(x,\partial_x)))$.
In the case that $ P(x,\partial_x)$ and $ Q(x,\partial_x)$ are differential operators, the orders are bounded below by $0$, so no other lower bound is necessary.
However, for fractional differential operators $F( P(x,\partial_x), Q(x,\partial_x))$ can have negative order, even if both the order of $ P(x,\partial_x)$ and $ Q(x,\partial_x)$ are positive
(for example, one can take $P(x,\partial_x) = \partial_x$, $ Q(x,\partial_x) = \partial_x + \partial_x^{-1}$ and $F(z,w) = z-w$).

To begin, we prove a lemma for finding common denominators of sequential products of fractional differential operators.
\begin{lem}\label{sequential product lemma}
Let $ P_i(x,\partial_x)$ and $Q_i(x,\partial_x)$ be differential operators for $1\leq i\leq n$ with $Q_i(x,\partial_x)\neq 0$ a nonzero differential operator of order $\ell_i$ for all $i$.
Then there exist differential operators $ L_\ell(x,\partial_x)$ and $ L_r(x,\partial_x)$ of order at most $\ell_1+\dots+\ell_n$ satisfying
$$ L_\ell P_1 Q_1^{-1} P_{2} Q_{2}^{-1}\dots P_k Q_k^{-1}\ \ \text{is a differential operator for all $1\leq k\leq n$},$$
$$ P_k Q_k^{-1} P_{k-1} Q_{k-1}^{-1}\dots P_1 Q_1^{-1} L_r\ \ \text{is a differential operator for all $1\leq k\leq n$}.$$
\end{lem}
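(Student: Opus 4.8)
The plan is to reduce the lemma to an order-controlled version of the one-sided Ore condition and then to induct on $n$, building the denominators one factor at a time while tracking orders. Throughout I would work inside the skew field of fractions of $\mathcal D=\bbc[[x]][\partial_x]$, using that $\mathcal D$ is an Ore domain and that $\ord$ is additive on products. The engine of the argument would be the following claim: given $E,Q\in\mathcal D$ with $Q$ of order $\ell\geq 0$, there is a nonzero $M\in\mathcal D$ with $\ord M\leq \ell$ and $ME\in\mathcal D Q$ (equivalently $MEQ^{-1}\in\mathcal D$); symmetrically there is a nonzero $M'\in\mathcal D$ with $\ord M'\leq\ell$ and $EM'\in Q\mathcal D$.

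To prove the claim I would set up a linear system for a \emph{pair} of operators rather than for $M$ alone, which is what keeps everything over $\bbc[[x]]$ even when the leading coefficient of $Q$ is not a unit. Writing $e=\ord E$, consider the $\bbc[[x]]$-linear map
\[
\Phi:\ \{M:\ord M\leq \ell\}\ \oplus\ \{C:\ord C\leq e\}\ \longrightarrow\ \{A:\ord A\leq \ell+e\},\qquad (M,C)\mapsto ME-CQ .
\]
The source is free of rank $(\ell+1)+(e+1)=\ell+e+2$ and the target is free of rank $\ell+e+1$. Extending scalars to the fraction field $\bbc((x))$, a map between $\bbc((x))$-vector spaces of dimensions $\ell+e+2>\ell+e+1$ has nonzero kernel, so there is $(M,C)\neq(0,0)$ with $ME=CQ$ and coefficients in $\bbc((x))$. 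Clearing denominators by a single nonzero $f\in\bbc[[x]]$ replaces $(M,C)$ by $(fM,fC)\in\mathcal D\oplus\mathcal D$, which still satisfies $ME=CQ$ and still has $\ord M\leq\ell$. Since $ME=CQ$, if $M=0$ then $CQ=0$, forcing $C=0$ as $\mathcal D$ is a domain and $Q\neq0$; hence the chosen nonzero element has $M\neq0$, proving the left version, and the right version is identical with the two factors interchanged.

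With the claim in hand I would build $L_\ell$ by induction on $n$, arranging that the partial denominator works for all prefixes simultaneously. Start from $L_\ell^{(0)}=1$. Suppose $L_\ell^{(k-1)}$ has order $\leq \ell_1+\dots+\ell_{k-1}$ and $L_\ell^{(k-1)}T_j\in\mathcal D$ for all $j\leq k-1$, where $T_j=P_1Q_1^{-1}\cdots P_jQ_j^{-1}$. Put $D=L_\ell^{(k-1)}T_{k-1}\in\mathcal D$ and $E=DP_k$, so that $L_\ell^{(k-1)}T_k=EQ_k^{-1}$; applying the claim to $E,Q_k$ (the case $E=0$ being trivial) yields $M_k\neq0$ with $\ord M_k\leq\ell_k$ and $M_kEQ_k^{-1}\in\mathcal D$. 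Setting $L_\ell^{(k)}=M_kL_\ell^{(k-1)}$ gives $L_\ell^{(k)}T_k\in\mathcal D$, while for $j<k$ the product $L_\ell^{(k)}T_j=M_k\,(L_\ell^{(k-1)}T_j)$ stays in $\mathcal D$ because $M_k\in\mathcal D$. As orders are additive, $\ord L_\ell^{(n)}\leq \ell_1+\dots+\ell_n$, and $L_\ell:=L_\ell^{(n)}$ is the desired left denominator. The operator $L_r$ is produced by the mirror-image induction, prepending factors $P_kQ_k^{-1}$ on the left, appending the operators $M_k'$ from the right version of the claim on the right, and using $DM_k'\in Q_k\mathcal D$.

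The one genuinely delicate point, and where I expect the main obstacle to lie, is the interplay between the order bound and the requirement that the denominators have coefficients in $\bbc[[x]]$ rather than $\bbc((x))$. If $Q_k$ has a non-unit leading coefficient then naive right division by $Q_k$ introduces poles, so a left denominator found only over $\bbc((x))$ need not remain a genuine differential operator; for instance with $Q=x$ and $E=1$ one must take $M=x$, not $M=1$. Solving for the pair $(M,C)$ and clearing denominators jointly is precisely what circumvents this, so that no hypothesis on leading coefficients is needed. The only remaining thing to verify is that a single $M_k$ repairs the $k$-th prefix without disturbing the earlier ones, which is immediate since left (resp.\ right) multiplication by a differential operator preserves $\mathcal D$.
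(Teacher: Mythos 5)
Your proof is correct, and its skeleton coincides with the paper's: both arguments isolate an order-bounded one-sided Ore condition for a single pair and then run an induction in which each new denominator factor is multiplied on the outside, so that it cannot disturb the prefixes already repaired (the paper phrases this through chained relations $Q_{k+1}R_{k+1}=P_kR_kL_{k+1}$ and $L_r=L_1L_2\cdots L_n$; you phrase it through $L_\ell^{(k)}=M_kL_\ell^{(k-1)}$, which is the same mechanism). The genuine difference is how the order-bounded Ore step is established. The paper passes to formal adjoints and argues with kernels: it chooses $L^*$ with $\ker(L^*)=P^*\cdot\ker(Q^*)$, uses that kernel containment yields right divisibility to get $R^*Q^*=L^*P^*$, and reads off $\ord(L)=\dim\ker(L^*)\le\dim\ker(Q^*)=\ord(Q)$. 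You instead compare ranks of free $\bbc[[x]]$-modules: the pairing $(M,C)\mapsto ME-CQ$ maps rank $\ell+e+2$ into rank $\ell+e+1$, so after base change to $\bbc((x))$ it has nonzero kernel, and a common denominator pulls a kernel element back into $\bbc[[x]]$ without raising orders, with the domain property of $\mathcal D$ forcing $M\neq 0$. Your route costs a little more bookkeeping but is more robust: the identities $\dim\ker(Q^*)=\ord(Q^*)$ and ``kernel containment implies right divisibility'' hold over $\bbc[[x]]$ only when leading coefficients are units (for instance $x\partial_x+1$ has order $1$ but trivial kernel in $\bbc[[x]]$), so the paper's kernel argument implicitly needs either that hypothesis or a passage to a larger solution space, whereas your dimension count applies verbatim to arbitrary nonzero $Q_i$ --- exactly the generality the lemma asserts, as you correctly flag in your final paragraph. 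What the adjoint--kernel route buys in exchange is brevity and explicit divisor relations between the successive numerators and denominators.
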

\begin{proof}
We will prove the existence of $L_r(x,\partial_x)$, since the proof for $ L_\ell(x,\partial_x)$ is similar.
First note that for any differential operators $ P(x,\partial_x),  Q(x,\partial_x)$ there exist differential operators $ R(x,\partial_x), L(x,\partial_x)$ such that $ QR =  PL$ and $\ord( Q) \geq \ord( L)$.
To see this, take $ L^*$ to be a differential operator with $\ker( L^*) =  P^*\cdot\ker( Q^*)$.  Then $ Q^*$ right divides $ L^* P^*$, i.e. there exists $ R^*$ such that $ R^* Q^* =  L^* P^*$, so that $ PL=QR$ and $\ord(L) = \ord(L^*) = \dim\ker(L^*)\leq \dim\ker(Q^*) = \ord(Q^*) = \ord(Q)$.

Now define $ R_j,  L_j$ inductively by $ L_1 =  Q_1$, $ R_1 = 1$ and $ Q_{k+1} R_{k+1} =  P_k R_k L_{k+1}$ with $\ord( L_{k+1})\leq \ell_{k+1}$ for all $k\geq 0$.
Then in particular $ R_{k+1} L_{k+1}^{-1} =  Q_{k+1}^{-1} P_k R_k$ so that $L_r =  L_1 L_2\dots L_n$ satisfies
$$ P_k Q_k^{-1} P_{k-1} Q_{k-1}^{-1}\dots P_1 Q_1^{-1} L_r =  P_k R_k L_k L_{k+1}\dots  L_n$$
is a differential operator for all $1\leq k\leq n$.
\end{proof}

Now to get our lower bound on the order of polynomials in fractional differential operators, we introduce the notion of the \vocab{denominatorial order} of a fractional differential operator $ P(x,\partial_x)$, defined by
$$\dord( P) = \min\{\ord( L):  L\ \text{and}\ P L\ \text{are both differential operators with}\  L\neq 0\}.$$
In other words $\dord( P)$ is the minimal order of a right denominator for $ P$.
With this in mind, we have a lower bound defined by the following lemma.
\begin{lem}
Let $N>0$ be an integer and let $ P(x,\partial_x)$ and $ Q(x,\partial_x)$ be commuting fractional differential operators and let $F(z,w)\in\bbc[z,w]$ be a polynomial with $\deg_z(F)\leq N$ and $\deg_w(F)\leq N$.
Then if $F( P, Q)\neq 0$ we must have
$$-N(\dord( P) + \dord( Q)) \leq \ord(F( P, Q))\leq N(\ord( P) +\ord( Q)).$$
\end{lem}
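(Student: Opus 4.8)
The plan is to prove the two inequalities separately. The upper bound is a routine consequence of the valuation-like properties of the order on the skew field of fractions of $\mathcal D$, while the lower bound carries the new content and is obtained by clearing all denominators of $F(P,Q)$ at once, but \emph{from opposite sides}, using Lemma~\ref{sequential product lemma}.

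For the upper bound I would use that on the skew field of fractions of $\mathcal D$ the order satisfies $\ord(AB)=\ord(A)+\ord(B)$ and $\ord(A+B)\le \max(\ord(A),\ord(B))$, and that a nonzero differential operator has order $\ge 0$. Writing $F(z,w)=\sum_{0\le i,j\le N}c_{ij}z^iw^j$ gives $F(P,Q)=\sum_{i,j}c_{ij}P^iQ^j$, so that
\[
\ord(F(P,Q))\le \max_{c_{ij}\neq 0}\bigl(i\,\ord(P)+j\,\ord(Q)\bigr)\le N(\ord(P)+\ord(Q)),
\]
the last step using $\ord(P),\ord(Q)\ge 0$ and $i,j\le N$.

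For the lower bound the idea is to produce two controlled denominators acting on different sides. First I would apply Lemma~\ref{sequential product lemma} to $N$ copies of $P$ (that is, to $P_iQ_i^{-1}=P$ for $1\le i\le N$) to obtain a nonzero differential operator $L_r$ with $\ord(L_r)\le N\dord(P)$ such that $P^iL_r$ is a differential operator for all $0\le i\le N$. Symmetrically, applying the lemma to $N$ copies of $Q$ yields a nonzero differential operator $L_\ell$ with $\ord(L_\ell)\le N\dord(Q)$ such that $L_\ell Q^j$ is a differential operator for all $0\le j\le N$. Since $P$ and $Q$ commute, each monomial may be written as $P^iQ^j=Q^jP^i$, and associativity gives
\[
L_\ell\,F(P,Q)\,L_r=\sum_{0\le i,j\le N}c_{ij}\,(L_\ell Q^j)(P^iL_r).
\]
Each factor $L_\ell Q^j$ and $P^iL_r$ is a differential operator, so every summand, and hence $L_\ell F(P,Q)L_r$, is a differential operator. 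As $L_\ell$, $F(P,Q)$ and $L_r$ are nonzero and the ambient ring is a domain, this product is a \emph{nonzero} differential operator, so it has order $\ge 0$. Additivity of the order then yields $\ord(L_\ell)+\ord(F(P,Q))+\ord(L_r)\ge 0$, whence $\ord(F(P,Q))\ge -\ord(L_\ell)-\ord(L_r)\ge -N(\dord(P)+\dord(Q))$.

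The step I expect to be the main obstacle is precisely the decision to clear denominators from two sides rather than one. Clearing both $P$ and $Q$ from the right fails: after removing the $P$-denominators one is left with differential operators wedged between the powers $Q^j$ and any prospective right $Q$-denominator, and these sandwiched factors prevent the denominatorial orders from merely adding. Splitting the clearing, $Q$ on the left and $P$ on the right, uses commutativity exactly so that the factors flanking $F(P,Q)$ peel off as genuine products of differential operators, which is what forces $\dord(P)$ and $\dord(Q)$ to add instead of compound.
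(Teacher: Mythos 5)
Your proposal is correct and takes essentially the same route as the paper: the paper also applies Lemma~\ref{sequential product lemma} to the powers of the two operators to obtain a left denominator of order at most $N\dord(P)$ and a right denominator of order at most $N\dord(Q)$, sandwiches $F(P,Q)$ between them so that it becomes a sum of products of genuine differential operators, and deduces $\ord(F(P,Q))\ge -N(\dord(P)+\dord(Q))$ from the non-negativity of the order of a nonzero differential operator. The only differences are cosmetic: the paper clears $P$ on the left and $Q$ on the right (so the monomials $P^iQ^j$ need no re-commuting), while you do the mirror image, and you make explicit the assumption $\ord(P),\ord(Q)\ge 0$ that the paper's ``obvious'' upper bound tacitly uses.
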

\begin{proof}
The inequality $\ord(F( P, Q))\leq N(\ord( P) +\ord( Q))$ is obvious, so we just need to show the remaining inequality.
By the previous lemma, we can choose differential operators $ L_\ell$ and $ L_r$ of order at most $N\dord( P)$ and $N\dord( Q)$, respectively, such that $ L_\ell  P^j$ and $ Q^k L_r$ are differential operators for all $0\leq j,k,\leq N$.
It follows that $ L_\ell F( P, Q) L_r$ is a differential operator, and hence $\ord(F( P, Q)) \geq -(\ord( L_\ell) + \ord( L_r))\geq -N(\dord( P) + \dord( Q))$.
\end{proof}

With this lemma in place, we can prove our extension of Burchnall and Chaundy's theorem for fractional differential operators.
\begin{thm}
Let $ P(x,\partial_x)$ and $ Q(x,\partial_x)$ be commuting fractional differential operators, where $P$ is monic of nonzero order and $Q$ is not a constant.
Then $ P(x,\partial_x)$ and $ Q(x,\partial_x)$ are algebraically dependent.
\end{thm}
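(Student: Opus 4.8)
The plan is to run exactly the growth-of-algebra dichotomy used in the second proof of Theorem~\ref{bctheorem}, but now feeding in the two-sided order estimate supplied by the preceding lemma in place of the trivial lower bound $\ord \geq 0$ available in the differential case. Fix an integer $N \geq 1$ and set $S_N = \{P^iQ^j : 0 \le i,j \le N\}$, so that $\vspan_\bbc S_N$ is precisely the set of operators $F(P,Q)$ with $\deg_z F, \deg_w F \le N$. By the preceding lemma every nonzero element of $\vspan_\bbc S_N$ has order in the interval $[-N(\dord P + \dord Q),\, N(\ord P + \ord Q)]$, which contains at most $N(\ord P + \ord Q + \dord P + \dord Q) + 1$ integers. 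The goal is to show $\dim_\bbc \vspan_\bbc S_N$ is bounded by this same quantity, i.e. grows only linearly in $N$.

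To obtain the dimension bound I would, for each integer $n$ in the above interval, choose $R_n \in \vspan_\bbc S_N$ monic of order $n$ if such an operator exists and set $R_n = 0$ otherwise, then put $V = \vspan_\bbc\{R_n\}$. The crux, exactly as in the differential case, is that $V = \vspan_\bbc S_N$. If not, pick $R \in \vspan_\bbc S_N \diff V$ of smallest order $k$; here the \emph{lower} bound is essential, since it is what guarantees that the orders occurring are bounded below and hence that a smallest one exists. Because $R$ is a polynomial in the commuting pair $P,Q$, it commutes with the monic operator $P$ of nonzero order $\ell$, and comparing the coefficients of $\partial_x^{\ell+k-1}$ in $PR$ and $RP$ shows the leading coefficient $r(x)$ of $R$ satisfies $\ell\, r'(x) = 0$, so $r(x) = \alpha \in \bbc \diff \{0\}$. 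Then $R/\alpha$ is monic of order $k$, so $R_k \neq 0$ and $R/\alpha - R_k$ has order $< k$, whence $R/\alpha - R_k \in V$ by minimality and therefore $R \in V$, a contradiction. This forces $\dim_\bbc \vspan_\bbc S_N = \dim_\bbc V \le N(\ord P + \ord Q + \dord P + \dord Q) + 1$.

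Finally I would contrast this linear bound with the count under the hypothesis to be refuted. If $P$ and $Q$ were algebraically independent, then the $(N+1)^2$ monomials in $S_N$ would be linearly independent over $\bbc$ (any linear dependence is a nonzero polynomial relation of bidegree $\le (N,N)$), giving $\dim_\bbc \vspan_\bbc S_N = (N+1)^2$. For $N$ large this quadratic growth exceeds the linear bound just established, a contradiction; hence $P$ and $Q$ are algebraically dependent. Note that this argument never uses $\ord Q \neq 0$, so it covers the case where $Q$ is a non-constant operator of order $0$.

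I expect the only genuinely delicate step to be the leading-coefficient computation in the fractional setting: one must check that the identity $[\,\partial_x^\ell, r(x)\partial_x^k\,] = \ell\, r'(x)\,\partial_x^{\ell+k-1} + (\text{lower order})$ and the cancellation of the subleading terms of $P$ against the leading term of $R$ persist verbatim when $k$ is a possibly negative integer and $P$ is merely a monic fractional operator, so that commuting with $P$ still forces a constant leading coefficient in $\bbc$. Everything else is the linear-versus-quadratic growth contradiction, the single new ingredient being that the denominatorial-order lower bound is what makes the descent on orders terminate.
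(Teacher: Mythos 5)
Your proposal is correct and follows essentially the same route as the paper's own proof: the same span $S_N$, the same construction of the monic representatives $R_n$ and the descent on minimal order using commutation with the monic operator $P$ to force a constant leading coefficient, and the same linear-versus-quadratic growth contradiction, with the denominatorial-order lower bound playing exactly the role you identify. Your explicit verification that $[P,R]=0$ forces $\ell\,r'(x)=0$ even for negative order $k$ is a detail the paper leaves implicit, but it is the same argument.
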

\begin{proof}
For $N\geq 1$, let $S_N = \{ P(x,\partial_x)^i Q(x,\partial_x)^j: 0\leq i,j\leq N\}$ and let $r = \ord( P(x,\partial_x)) + \ord( Q(x,\partial_x)) + \dord( P(x,\partial_x)) + \dord( Q(x,\partial_x))$.
By the previous lemma, we know that the differential operators in $\vspan_\bbc S_N$ will have orders between $-rN$ and $rN$.
For each integer $n$ with $|n|\leq rN$ choose $ R_n(x,\partial_x) \in \vspan_\bbc S_N$ such that $ R_n(x,\partial_x)$ is monic of degree $n$ if it exists and $ R_n(x,\partial_x) =0$ otherwise.

Let $V = \vspan_\bbc\{ R_n(x,\partial_x) : |c|\leq rN\}$.
We claim that $V=\vspan_{\bbc}S_N$.
The inclusion $V\subseteq \vspan_{\bbc}S_N$ is obvious.
To prove the opposite direction, assume that $V\neq \vspan_\bbc S_N$.
Choose $ R(x,\partial_x)\in\vspan_{\bbc}S_N\diff V$ of smallest possible order $k$ and note that $k\leq (\ell+m)N$.
Since $ R(x,\partial_x)$ commutes with the monic operator $ P(x,\partial_x)$, it must have constant leading coefficient $\alpha\in\bbc\diff\{0\}$.
But then $ R(x,\partial_x)/\alpha$ is monic, so $R_k(x,\partial_x)\neq 0$ and $ R(x,\partial_x)/\alpha - R_k(x,\partial_x)$ has order smaller than $k$.
By the minimality of $k$, it  follows that $ R(x,\partial_x)/\alpha - R_jk(x,\partial_x) \in V$, but then $ R(x,\partial_x)\in V$, which is a contradiction.
This proves our claim.

As a consequence of the previous paragraph, we see that $\dim\vspan_\bbc S_N \leq \dim V \leq 2rN +1$.
Now if $ P(x,\partial_x)$ and $ Q(x,\partial_x)$ are algebraically independent, then the dimension of $\vspan_\bbc S_N$ is $(N+1)^2$, which grows quadratically with $N$.
Thus $ P(x,\partial_x)$ and $ Q(x,\partial_x)$ must be algebraically dependent.
\end{proof}

\section{The spectral field of a point in Sato's Grassmannian}
The spectral algebra $A_W$ of a point $W$ in Sato's Grassmannian plays a fundamental role in integrable systems.  In particular, when $A_W$ is nontrivial 
(i.e. $\bbc\subsetneq A_W$) it provides a connection between algebraic geometry and commutative algebras of differential operators.  
However, $A_W$ is trivial (i.e. $A_W=\bbc$) for many points $W$ of $\Gr$, which limits the applications of this invariant.

In this section we introduce and study a much richer invariant of the planes in Sato's Grassmannian, the spectral field $K_W$ of $W\in\Gr$.
The spectral field $K_W$ is a natural extension of $A_W$ in the sense that it is the fraction field of $A_W$ when $A_W$ is nontrivial 
(see Proposition \ref{AW vs KW} below). However, as shown by Example \ref{trivial AW example}, $K_W$ can be nontrivial even when $A_W=\bbc$.
In Theorem \ref{KW and fractional operators} we use this new invariant to give a classification of maximal 
algebras of commuting fractional differential operators in terms of Sato's Grassmannian.
\subsection{The spectral field $K_W$}
\label{4.1}
Let $W\in\Gr$.
We define the \vocab{spectral field} $K_W$ of $W$ to be
$$K_W = \{f(z)\in\mathbb L: \dim (W + f(z)W)/W < \infty\}.$$
It is clear from the definition that $A_W\subseteq K_W$. It is a nontrivial fact that $K_W$ is always a field. This is proved in Proposition \ref{Kw-field}. 
For its proof we will need the following lemma.
\begin{lem}
\label{Kw-second-fin} For all $W \in \Gr$ and $f(z) \in K_W$, 
\[
\dim (W /(f(z) W \cap W))  < \infty.
\]
\end{lem}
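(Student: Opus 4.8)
The plan is to reduce the two-sided finiteness to a single computation inside $\mathbb{L}_+$, and deliberately to avoid the naive symmetric approach, which is circular here. Setting $U=f(z)W\cap W$ and assuming $f(z)\neq 0$ (so multiplication by $f(z)$ is a bijection of $\mathbb{L}$), the second isomorphism theorem identifies $(W+f(z)W)/W\cong f(z)W/U$, so the hypothesis $f(z)\in K_W$ says precisely that $U$ has finite codimension in $f(z)W$, whereas the conclusion asks for finite codimension of $U$ in $W$. One is tempted to apply ``multiply by $f(z)^{-1}$'' to pass from one side to the other, but this would amount to proving $f(z)^{-1}\in K_W$, i.e. to the very field property (Proposition \ref{Kw-field}) that this lemma is meant to support; so I would not invert $f(z)$.

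The first real step is to show that $\pi_+(f(z)W)$ has finite codimension in $\mathbb{L}_+$, argued through the order filtration rather than through any containment of a tail $z^N\mathbb{C}[z]$ (which can fail). Since $W\in\Gr$, the image $\pi_+(W)$ has finite codimension in $\mathbb{C}[z]$, so its set of polynomial degrees is cofinite; hence for every sufficiently large $n$ there is $w_n\in W$ with $\deg\pi_+(w_n)=n$, and then necessarily $\ord(w_n)=n$ (the negative part lies in $\mathbb{L}_-$). Multiplying by $f(z)$, of order $e:=\ord(f)$, gives $f(z)w_n$ of order $e+n$ with nonzero polynomial leading term once $n$ is large, so the degree set of $\pi_+(f(z)W)$ is again cofinite and $\pi_+(f(z)W)$ has finite codimension in $\mathbb{L}_+$. (The same order bound, together with the fact that orders in $f(z)W$ are bounded below, shows $f(z)W\cap\mathbb{L}_-$ is finite-dimensional, so in fact $f(z)W\in\Gr$; only the cokernel half is needed below.)

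Next I would transport the hypothesis into $\mathbb{L}_+$. Because $U\subseteq f(z)W$, applying $\pi_+$ yields a surjection $f(z)W/U\twoheadrightarrow \pi_+(f(z)W)/\pi_+(U)$, so finiteness of $f(z)W/U$ forces $\dim\big(\pi_+(f(z)W)/\pi_+(U)\big)<\infty$. Combining this with the previous step, $\pi_+(U)$ has finite codimension in $\mathbb{L}_+$, and a fortiori in $\pi_+(W)$.

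Finally I would lift back to $W$. The assignment $w+U\mapsto \pi_+(w)+\pi_+(U)$ defines a surjection $W/U\to \pi_+(W)/\pi_+(U)$ whose kernel is a quotient of $\ker(\pi_+|_W)=W\cap\mathbb{L}_-$, finite-dimensional since $W\in\Gr$. Hence $\dim(W/U)\le \dim\big(\pi_+(W)/\pi_+(U)\big)+\dim(W\cap\mathbb{L}_-)<\infty$, which is the claim. I expect the crux to be the first step: recognizing that multiplication by $f(z)$ preserves cofiniteness of polynomial degrees (so that the whole problem descends to $\mathbb{L}_+$) is exactly the ingredient that legitimately replaces inverting $f(z)$, and it is where the rigid ``shift by $\ord(f)$'' structure of $\mathbb{L}=\bbc((z^{-1}))$ is used.
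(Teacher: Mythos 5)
Your proof is correct, and it takes a genuinely different route from the paper's. The paper never projects to $\mathbb L_+$: it works with the degree filtration $U_n=\{u(z)\in U:\deg u(z)\le n\}$ inside $\mathbb L$, uses the stabilization $\dim W_n=n+c$ for $n\gg 0$ (valid for any $W\in\Gr$), picks a finite-dimensional complement $E\subseteq f(z)W$ with $f(z)W+W=E\oplus W$, and observes that $(f(z)W)_{n+d}=\bigl(f(z)W\cap W\bigr)_{n+d}\oplus E$ once $n+d$ exceeds the top degree occurring in $E$; since multiplication by $f(z)$ carries $W_n$ bijectively onto $(f(z)W)_{n+d}$, where $d=\deg f(z)$, a dimension count shows that the codimension of $\bigl(f(z)W\cap W\bigr)_{n+d}$ in $W_{n+d}$ is eventually a constant, which is then the exact value of $\dim W/(f(z)W\cap W)$. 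You instead descend to $\mathbb L_+$ through $\pi_+$ and invoke the two defining finiteness conditions of $\Gr$ separately: the cokernel condition gives cofiniteness of the degree set of $\pi_+(W)$, whence $\pi_+(f(z)W)$ has finite codimension in $\mathbb L_+$, and the kernel condition (finite-dimensionality of $W\cap\mathbb L_-$) controls the kernel of your surjection $W/U\to\pi_+(W)/\pi_+(U)$. Both arguments run on the same engine --- multiplication by a nonzero $f(z)$ shifts top degrees uniformly by $\deg f(z)$, which is the legitimate substitute for inverting $f(z)$, a move both proofs rightly avoid since closure of $K_W$ under inverses is exactly what Proposition \ref{Kw-field} later deduces from this lemma --- but they cash it in differently: the paper's filtration count buys an exact codimension formula, while yours buys a shorter, more structural argument that uses the Grassmannian conditions in precisely the form in which they are stated, at the price of only the upper bound $\dim(W/U)\le\dim\bigl(\pi_+(W)/\pi_+(U)\bigr)+\dim(W\cap\mathbb L_-)$. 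A minor point in your favor: you make explicit the hypothesis $f(z)\neq 0$, which the paper (and indeed the statement of the lemma) leaves implicit, and without which the claim is false for $f(z)=0$.
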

\begin{proof} For a subspace $U$ of $\bbl$ and $n \in \bbz$, set
\[
U_n = \{ u(z) \in U : \deg u(z) \leq n \}.
\]
Since $W \in \Gr$, there exist $c, n_0 \in \bbz$ such that  
\begin{equation}
\label{stab-dim}
\deg W_n = n+c \quad \mbox{for all} \quad n \geq n_0.
\end{equation}
By the assumption that $f(z) \in K_W$, we may choose a finite dimensional subspace $E \subset f(z) W$ 
such that
\[
f(z)W + W = E \oplus W. 
\]
Set 
\[
d= \deg f(z) \quad \mbox{and} \quad
\ell = \max \{ \deg e(z) : e(z) \in E \}.
\]
From the bijectivity of the multiplication by $f(z)$ on $\bbl$ we have 
\[
E \subset f(z) W_n = (f(z) W)_{n+d}, \; \; \mbox{and thus}, \; \;
(f(z) W)_{n+d} = ((f(z) W)_{n+d} \cap W_{n+d}) \oplus E
\]
for $n \geq \ell - d$.
This implies that
\begin{equation}
\label{dim-ineq}
\dim ( (f(z) W)_{n+d} \cap W_{n+d})= \dim (f(z) W)_{n+d} - \ell= \dim W_n - \ell \quad \mbox{for} \quad n \geq \ell - d.
\end{equation}
Combining \eqref{stab-dim} and \eqref{dim-ineq} gives that for $n \geq \max \{ n_0, n_0 -d, \ell-d \}$, 
\begin{align*}
\dim W_{n+d} - \dim ( (f(z) W \cap W)_{n+d}) 
&= n+d +c - (n+c - \ell) 
\\
&= d-\ell. 
\end{align*}
Therefore, $\dim (W /(f(z) W \cap W)) = d -\ell$.
\end{proof}

\begin{prop}
\label{Kw-field}
The set $K_W$ is a field.
\end{prop}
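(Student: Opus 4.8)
The plan is to verify that $K_W$ is closed under the four field operations inherited from $\bbl = \bbc((z^{-1}))$, the only substantial point being closure under inversion, which I would deduce from Lemma \ref{Kw-second-fin}. Throughout I write $q(f) = \dim (W + f(z)W)/W$ for $f(z)\in\bbl$, so that by definition $K_W = \{f : q(f) < \infty\}$. First I would record the easy facts: $0,1\in K_W$, and for $c\in\bbc\diff\{0\}$ one has $c f(z)W = f(z)W$ as $\bbc$-subspaces (since $cW=W$), so $q(cf)=q(f)$ and $K_W$ is stable under scalar multiplication, hence in particular under additive inverses.

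For additive closure I would observe that $(f+g)W\subseteq fW+gW$, so $W+(f+g)W\subseteq W+fW+gW$, and inside $\bbl/W$ the image of the right-hand side is $((W+fW)/W)+((W+gW)/W)$. This gives the subadditivity estimate $q(f+g)\le q(f)+q(g)$, so that $K_W$ is a $\bbc$-subspace of $\bbl$. For multiplicative closure, given $f,g\in K_W$ I would choose a finite-dimensional subspace $E\subseteq gW$ with $W+gW = W\oplus E$. Then $gW\subseteq W+E$, hence $f\cdot gW\subseteq fW+fE$ and $W+(fg)W\subseteq W+fW+fE$. Since $q(f)<\infty$ and $\dim fE\le\dim E<\infty$, this yields $q(fg)\le q(f)+\dim E<\infty$, so $fg\in K_W$. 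At this stage $K_W$ is a commutative subring of $\bbl$ containing $\bbc$.

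The heart of the argument will be showing that every nonzero $f\in K_W$ has $f^{-1}\in K_W$, with the inverse taken in the field $\bbl$. The key observation is that multiplication by $f$ is a $\bbc$-linear automorphism of $\bbl$ that carries $f^{-1}W$ onto $W$ and carries $f^{-1}W\cap W$ onto $fW\cap W$ (using that multiplication by $f$ is bijective and commutes with intersections); it therefore induces an isomorphism $f^{-1}W/(f^{-1}W\cap W)\xrightarrow{\sim} W/(fW\cap W)$. Combining this with the second isomorphism theorem $(W+f^{-1}W)/W\cong f^{-1}W/(f^{-1}W\cap W)$, I obtain
\[
\dim (W + f^{-1}W)/W = \dim f^{-1}W/(f^{-1}W\cap W) = \dim W/(fW\cap W).
\]
By Lemma \ref{Kw-second-fin} applied to $f$, the right-hand side is finite, so $q(f^{-1})<\infty$, i.e. $f^{-1}\in K_W$.

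The only place where genuine work is hidden is the finiteness of $\dim W/(fW\cap W)$, which is precisely the content of Lemma \ref{Kw-second-fin}; once that is in hand, the inverse step is immediate via the multiplication-by-$f$ isomorphism, and I expect no further obstacle. Assembling the four closure properties then shows that $K_W$ is a field.
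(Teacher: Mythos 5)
Your proposal is correct and follows essentially the same route as the paper: closure under sums and products via finite-dimensional complements of $W$ in $W+f_i(z)W$, and closure under inversion by combining the second isomorphism theorem, the bijectivity of multiplication by $f(z)$ on $\bbl$, and Lemma \ref{Kw-second-fin} to get $(W+f(z)^{-1}W)/W\cong W/(W\cap f(z)W)$. The only differences are cosmetic (your subadditivity phrasing of the sum/product estimates versus the paper's explicit dimension bounds).
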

\begin{proof}
Suppose that $f_1(z),f_2(z)\in K_W$. For $i=1,2$,
choose finite dimensional subspaces $E_i' \subset f_i(z) W$ such that $f_i(z)W+W = W \oplus E_i'$. 
We have
\begin{align*}
&\dim \frac{W + (f_1(z)+f_2(z))W}{W} \leq \dim \frac{W + f_1(z)W + f_2(z)W}{W} \leq \dim (E_1'+E_2') < \infty,\\
&\dim \frac{W + f_1(z)f_2(z)W}{W} \leq \dim \frac{W + f_1(z)(W\oplus E_2')}{W} \leq \dim (E_1'+ f_1(z)E_2') < \infty.
\end{align*}
Therefore $f_1(z)+f_2(z), f_1(z)f_2(z) \in K_W$.

Using the second isomorphism theorem for abelian group homomorphisms, Lemma \ref{Kw-second-fin}
and the bijectivity of the $f(z)$ multiplication on $\bbl$, we obtain
\[
(f_i(z)^{-1} W + W)/ W \cong (f_i(z)^{-1} W)/ ( (f_i(z)^{-1} W) \cap W) \cong W/ ( W \cap f_i(z) W) < \infty.
\]
Hence $f_i(z)^{-1} \in K_W$, and thus, $K_W$ is a field. 
\end{proof}

When $A_W$ is nontrivial we can identify $K_W$ with the fraction field of $A_W$.
\begin{prop}\label{AW vs KW}
If $\bbc\subsetneq A_W$ then $K_W$ is the fraction field of $A_W$.
\end{prop}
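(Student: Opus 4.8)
The plan is to prove the two inclusions $\mathrm{Frac}(A_W)\subseteq K_W$ and $K_W\subseteq\mathrm{Frac}(A_W)$ separately, where throughout $\mathrm{Frac}(A_W)$ denotes the subfield of $\bbl$ generated by the domain $A_W$ (a domain since it is a subring of the field $\bbl$). The first inclusion I would dispatch immediately: by Proposition \ref{Kw-field} the set $K_W$ is a field, and it contains $A_W$, so it contains every quotient $a(z)/b(z)$ with $a(z),b(z)\in A_W$ and $b(z)\neq 0$; that is exactly $\mathrm{Frac}(A_W)\subseteq K_W$.

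For the reverse inclusion I would fix $f(z)\in K_W$ and aim to produce a single nonzero $b(z)\in A_W$ with $b(z)f(z)\in A_W$, for then $f=(bf)/b\in\mathrm{Frac}(A_W)$; in other words, the task is to find a common denominator lying in $A_W$. First, using the definition of $K_W$, I would choose a finite-dimensional subspace $E\subseteq f(z)W$ with $f(z)W+W=W\oplus E$, and set $V:=f(z)W+W$, so that $\dim_\bbc(V/W)<\infty$. The crucial observation is that $V$ is stable under multiplication by $A_W$: for $a(z)\in A_W$ one has $a(z)\big(f(z)W+W\big)=f(z)\big(a(z)W\big)+a(z)W\subseteq f(z)W+W$, using $a(z)W\subseteq W$ and the commutativity of $\bbl$. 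Since $W$ is itself $A_W$-stable by definition of $A_W$, the quotient $V/W$ becomes a finite-dimensional $A_W$-module.

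The heart of the argument is then a comparison of $\bbc$-dimensions. The structure map $A_W\to\End_\bbc(V/W)$ is a homomorphism of $\bbc$-algebras whose target is finite-dimensional, whereas $A_W$ is infinite-dimensional over $\bbc$: since $A_W\neq\bbc$ and $\bbc$ is algebraically closed, any nonconstant $a(z)\in A_W$ is transcendental over $\bbc$, so already $\bbc[a(z)]\subseteq A_W$ is infinite-dimensional. Hence this structure map has a nonzero kernel, and choosing $0\neq b(z)$ in this annihilator gives $b(z)V\subseteq W$; in particular $b(z)f(z)W\subseteq b(z)V\subseteq W$, i.e. $b(z)f(z)\in A_W$. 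With $b(z)\in A_W\setminus\{0\}$ this yields $f=(bf)/b\in\mathrm{Frac}(A_W)$ and completes the proof.

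I expect the only genuinely load-bearing step to be the verification that $V=f(z)W+W$ is $A_W$-stable with $V/W$ finite-dimensional; once these two facts are in hand, the existence of a common denominator is forced purely by the dimension mismatch between $A_W$ and $\End_\bbc(V/W)$. I would also note that this route needs no big-cell hypothesis: it uses only that $W\in\Gr$ (to guarantee $\dim_\bbc(V/W)<\infty$) together with the assumption that $A_W$ contains a nonconstant element.
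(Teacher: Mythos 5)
Your proof is correct and follows essentially the same route as the paper: both directions reduce to producing a nonzero element of $A_W$ that annihilates the finite-dimensional quotient $(W+f(z)W)/W$ and then using it as a common denominator. The only cosmetic difference is how that annihilator is found — the paper takes a fixed nonconstant $g(z)\in A_W$ and a nonzero polynomial $h$ with $h(g(z))$ acting trivially on the quotient, while you take any nonzero element of the kernel of the structure map $A_W\to\End_\bbc(V/W)$, which exists by the same dimension count (infinite-dimensionality of $A_W$ via transcendence of nonconstant elements over $\bbc$).
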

\begin{proof}
Since $A_W\subseteq K_W$ and $K_W$ is a field, we know that the fraction field of $A_W$ will necessarily be a subset of $K_W$.

Let $g(z)\in A_W\diff\bbc$ and suppose $f(z)\in K_W$.
Since $g(z)W\subseteq W$, $g(z)$ induces an endomorphism of $W + f(z)W$ which descends to an endomorphism of the quotient space $(W + f(z)W)/W$.
The quotient space is finite dimensional, so there exists a nonzero polynomial $h(z)$ with the property that $h(g(z))$ acts trivially.
Thus 
\[
h(g(z))W + f(z)h(g(z))W\subseteq W, 
\]
implying that $f(z)h(g(z))W\subseteq W$ so that $f(z)h(g(z))\in A_W$.
Note that $\deg(g(z))>0$ so $h(g(z))$ is a nonzero element of $A_W$.  Thus $f(z)$ is in the fraction field of $A_W$,
which proves the proposition.
\end{proof}

We should not interpret the previous proposition to mean that $K_W$ is nothing more than the fraction field of $A_W$ always.
Indeed, $K_W$ can be nontrivial even when $A_W$ is trivial.
\begin{ex}\label{trivial AW example}
Consider the point $W\in\Gr_+(0)$ defined by
\begin{align*}
W & = \text{span}\{z^n + nz^{n-2}: n\geq 0\}\\
	& = \text{span}\left\lbrace z^{2n}:n\geq 0\right\rbrace \oplus \text{span}\left\lbrace z^{2n+1} + (-1)^n\frac{(2n+1)!!}{z}: n\geq 0\right\rbrace.
\end{align*}
Suppose that $f(z)\in A_W$, i.e. that $f(z)W\subseteq W$.
Since $1\in W$, it follows that $f(z)\in W$. Furthermore, $zW\subseteq\mathbb L_+$ so $f(z) = \sum_{j=-1}^m a_jz^j$ for some $a_j \in \bbc$ for $- 1\leq j \leq m$.
Thus $f(z)z^{2n} = \sum_{j=-1}^m a_jz^{j+2n}$ must be in $W$
implying that 
\[
\sum_{k=0}^{\lfloor m/2\rfloor} (-1)^{n+k}a_{2k-1} (2n+2k-1)!! =0, \quad \forall n \in \bbn.
\]
It follows that $a_{2j-1} = 0$ for all $0 \leq j \leq \lfloor m/2\rfloor$.
Similarly, by considering expressions of the form $f(z)(z^{2n+1} + (-1)^n(2n+1)!!z^{-1})$, we obtain that $a_{2 j}=0$ for all $0 < j \leq \lfloor m/2\rfloor$. 
Thus $f(z)=a_0$ must be a constant and $A_W = \bbc$.
Moreover, for all polynomials $f(z)$
$$\dim\frac{W + f(z)W}{W} \leq \dim\frac{z^{-1}\mathbb L_+}{W} = 1.$$
Hence $\bbc[z]\subseteq K_W$ and so $\bbc(z)\subseteq K_W$.
\qed
\end{ex}

\subsection{Connection with maximal algebras of commuting differential operators}
\label{4.2}
As explained previously, the spectral algebra $A_W$ of a point $W$ of Sato's Grassmannian is intimately connected with a commutative algebra of differential operators.
In this section, we extend this connection to a connection between the spectral fields $K_W$ and maximal algebras of commuting fractional differential operators.

To begin, we recall an important characterization of the differential operators inside the ring of pseudodifferential operators.
Its proof is standard and is omitted for brevity.
\begin{lem}
\label{descr-do}
Let $L(x,\partial_x)$ be a pseudodifferential operator.
Then $L(x,\partial_x)$ is a differential operator if and only if $\mathbb L_+\cdot L(x,\partial_x)\subseteq \mathbb L_+$.
\end{lem}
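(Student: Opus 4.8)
The plan is to prove both implications by direct computation with the right $\mathcal P$-module structure on $\bbl$. Write a pseudodifferential operator as $L(x,\partial_x)=\sum_j u_j(x)\partial_x^j$ with $u_j(x)=\sum_{k\geq 0}c_{jk}x^k\in\bbc[[x]]$. The single computational input I would record first is the action on the monomial basis of $\mathbb L_+$: for $i\in\bbn$,
\[
z^i\cdot L(x,\partial_x)=\sum_j\sum_{k\geq 0}c_{jk}\,k!\binom{i}{k}z^{i-k+j},
\]
obtained by combining $z^i\cdot x^k=k!\binom{i}{k}z^{i-k}$ with $z^{i-k}\cdot\partial_x^j=z^{i-k+j}$. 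Since $\binom{i}{k}=0$ for $k>i$, for each fixed power of $z$ only finitely many $(j,k)$ contribute, so the right-hand side is a well-defined element of $\bbl$.

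For the forward implication, suppose $L(x,\partial_x)$ is a differential operator, so $u_j=0$ unless $0\leq j\leq n$. In the displayed formula the surviving terms have $0\leq k\leq i$ and $j\geq 0$, whence every exponent satisfies $i-k+j\geq 0$. Thus $z^i\cdot L\in\mathbb L_+$ for every $i\in\bbn$, and since the monomials $z^i$ span $\mathbb L_+=\bbc[z]$, this gives $\mathbb L_+\cdot L\subseteq\mathbb L_+$.

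For the converse I would first reduce to negative order. Split $L=L_++L_-$ with $L_+=\sum_{j\geq 0}u_j\partial_x^j$ the differential part and $L_-=\sum_{j\leq -1}u_j\partial_x^j$; the forward implication already yields $\mathbb L_+\cdot L_+\subseteq\mathbb L_+$, so the hypothesis $\mathbb L_+\cdot L\subseteq\mathbb L_+$ is equivalent to $\mathbb L_+\cdot L_-\subseteq\mathbb L_+$. It then suffices to show that a pseudodifferential operator of order $\leq -1$ preserving $\mathbb L_+$ is zero, which I would do by induction on $k$, proving that all coefficients $c_{jk}$ of $L_-$ vanish. In the base case $k=0$ the formula gives $z^0\cdot L_-=\sum_{j\leq -1}c_{j0}z^j\in z^{-1}\bbc[[z^{-1}]]$; since this also lies in $\mathbb L_+$ and $\mathbb L_+\cap z^{-1}\bbc[[z^{-1}]]=0$, it is zero, so $c_{j0}=0$ for all $j$. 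In the inductive step, assuming $c_{jk'}=0$ for all $j$ and all $k'<k$, the formula collapses to $z^k\cdot L_-=k!\sum_{j\leq -1}c_{jk}z^j$, again a series in strictly negative powers of $z$; membership in $\mathbb L_+$ forces $c_{jk}=0$ for all $j$. Hence $L_-=0$ and $L=L_+$ is a differential operator.

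The only genuine subtlety lies in the converse: $L_-$ may carry infinitely many nonzero coefficients, and distinct pairs $(j,k)$ can produce the same exponent $i-k+j$, so that a priori cancellations could conceal a nonvanishing coefficient. The reduction to negative order together with the induction on the $x$-degree $k$ is precisely what defeats this: once all coefficients of lower $x$-degree are known to vanish, the action of $z^k$ becomes triangular and isolates exactly the coefficients $c_{jk}$, leaving no room for hidden cancellation. Beyond this, the argument uses only the forward direction and the elementary fact that $\binom{i}{k}=0$ for $k>i$.
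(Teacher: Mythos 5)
Your proof is correct. The paper itself omits the proof of this lemma (stating only that it is ``standard''), so there is no argument of record to compare against; what you have written is a complete and valid version of the standard argument. The forward direction is the easy positivity-of-exponents check, and your converse correctly identifies and defeats the one real subtlety, namely that distinct pairs $(j,k)$ with the same value of $j-k$ contribute to the same power of $z$: by splitting off $L_-$ and inducting on the $x$-degree $k$, the action of $z^k$ becomes triangular (terms of lower $x$-degree vanish by hypothesis, terms of higher $x$-degree vanish because $\binom{k}{k'}=0$ for $k'>k$), so the coefficients $c_{jk}$ are isolated with no possible cancellation, using also that $\mathbb{L}_+\cap z^{-1}\bbc[[z^{-1}]]=0$.
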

In Lemma \ref{char-fdo} we obtain an extension of this result to a characterization of fractional differential operators.
To prove it, we first require a preliminary result.
\begin{lem}  
\label{exists-do}
Suppose that $E\subseteq\mathbb L$ is finite dimensional.
Then there exists a monic differential operator $Q(x,\partial_x)$ satisfying $E\cdot Q(x,\partial_x)\subseteq \mathbb L_+$.
\end{lem}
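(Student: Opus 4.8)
The plan is to pass to the quotient right $\mathcal D$-module $\mathbb L/\mathbb L_+$ and reinterpret the desired conclusion as an annihilation statement. Since the operator we seek is differential, Lemma \ref{descr-do} gives $\mathbb L_+\cdot L(x,\partial_x)\subseteq\mathbb L_+$ for every $L\in\mathcal D$, so right multiplication by any differential operator descends to a right action on $\overline{\mathbb L}:=\mathbb L/\mathbb L_+$. Writing $\overline{E}$ for the image of $E$, the condition $E\cdot Q(x,\partial_x)\subseteq\mathbb L_+$ is exactly $\overline{E}\cdot Q(x,\partial_x)=0$. Thus it suffices to produce a monic differential operator in the annihilator right ideal $\ann(\overline E)=\{Q\in\mathcal D:\overline E\cdot Q=0\}$.

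The key observation, and the step I expect to be the crux, is that $\overline{\mathbb L}$ is finitely generated as a module over $\bbc[[x]]$ under the right action $z^j\cdot x^k = k!\binom{j}{k}z^{j-k}$. In fact the class of $z^{-1}$ is a free generator: since $\overline{z^{-1}}\cdot x^k = (-1)^k k!\,\overline{z^{-k-1}}$, the map $f(x)\mapsto\overline{z^{-1}}\cdot f(x)$ is a $\bbc[[x]]$-linear bijection $\bbc[[x]]\xrightarrow{\sim}\overline{\mathbb L}$. This is precisely what converts the ``infinite tail'' obstruction into a finiteness statement: no differential operator truncates a Laurent tail at the level of $\mathbb L$, yet after descending to $\overline{\mathbb L}$ finiteness over $\bbc[[x]]$ becomes available. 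In particular $\overline{\mathbb L}$, and hence $\overline{\mathbb L}^{\oplus s}$ with $s=\dim_{\bbc}\overline E\le\dim_{\bbc}E<\infty$, is a Noetherian $\bbc[[x]]$-module because $\bbc[[x]]$ is Noetherian.

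I would then extract the monic operator by a division/stabilization argument. Fixing a basis $\overline{v}_1,\dots,\overline v_s$ of $\overline E$, the map $\Phi\colon\mathcal D\to\overline{\mathbb L}^{\oplus s}$, $Q\mapsto(\overline v_i\cdot Q)_i$, is right $\bbc[[x]]$-linear with kernel $\ann(\overline E)$, so $\mathcal D/\ann(\overline E)$ embeds as a $\bbc[[x]]$-submodule of the Noetherian module $\overline{\mathbb L}^{\oplus s}$ and is itself Noetherian. Since $\mathcal D$ is free as a right $\bbc[[x]]$-module on the basis $\{\partial_x^k\}_{k\ge0}$, the ascending chain $V_n=\sum_{k\le n}\overline{\partial_x^k}\,\bbc[[x]]$ of submodules of $\mathcal D/\ann(\overline E)$ stabilizes at some $m$, giving $\overline{\partial_x^m}=\sum_{k<m}\overline{\partial_x^k}f_k(x)$ for suitable $f_k(x)\in\bbc[[x]]$. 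Hence $Q(x,\partial_x):=\partial_x^m-\sum_{k<m}\partial_x^k f_k(x)$ lies in $\ann(\overline E)$; it is monic of order $m$ by construction and satisfies $E\cdot Q(x,\partial_x)\subseteq\mathbb L_+$, as required.

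The finiteness of $E$ enters only through $s<\infty$. Alternatively one may reduce to the case $\dim E=1$ by induction on $\dim E$: handle a hyperplane $E'\subseteq E$ by a monic $Q'$, then the remaining vector $e\cdot Q'$ by a second monic $Q''$, and take the product $Q'Q''$, which works because once an element lands in $\mathbb L_+$ it remains there under further differential operators. In that formulation the single-element case takes place inside $\overline{\mathbb L}\cong\bbc[[x]]$, whose submodules are ideals of a discrete valuation ring, which makes the Noetherian step completely transparent.
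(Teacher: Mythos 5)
Your proof is correct, and it reaches the lemma by a genuinely different mechanism than the paper does. The paper's proof is constructive: it reduces to a single element $f(z)=z^{-d}+\sum_{n>d}a_nz^{-n}$ (the general case following by composing monic operators, exactly as in your closing alternative, since differential operators preserve $\mathbb L_+$), and then solves a triangular recursion for the coefficients of a unit $q(x)\in\bbc[[x]]$ so that $f(z)\cdot q(x)=z^{-d}$; the operator $Q(x,\partial_x)=q(x)\partial_x^dq(x)^{-1}$ is then monic of order exactly $d$ and sends $f(z)$ into $\mathbb L_+$. You instead isolate the structural reason such a recursion is solvable: the identification of $\overline{\mathbb L}=\mathbb L/\mathbb L_+$ as a free right $\bbc[[x]]$-module of rank one on $\overline{z^{-1}}$ (your formula $\overline{z^{-1}}\cdot x^k=(-1)^k k!\,\overline{z^{-k-1}}$ is precisely the triangularity the paper's recursion exploits), after which the ascending chain condition on the Noetherian $\bbc[[x]]$-module $\mathcal D/\ann(\overline E)\hookrightarrow\overline{\mathbb L}^{\oplus s}$ hands you a monic annihilator. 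Every intermediate step you need does check out: $x\mathcal P$ is a right ideal, so the right action descends; the easy half of Lemma \ref{descr-do} makes $\overline{\mathbb L}$ a right $\mathcal D$-module; $\mathcal D$ is indeed free as a right $\bbc[[x]]$-module on $\{\partial_x^k\}_{k\geq 0}$; and rewriting $\partial_x^m-\sum_{k<m}\partial_x^k f_k(x)$ with coefficients on the left leaves the leading term $\partial_x^m$ untouched, so your $Q$ is monic. What the paper's route buys is explicitness and order control: $Q$ is exhibited as a conjugate of $\partial_x^d$, its order equals the pole order of $f$ (a sum of pole orders for a basis of $E$), and its action is transparent. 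What your route buys is conceptual economy and uniformity: no normalization of $f$, no induction on $\dim E$, and the finite-dimensionality hypothesis enters only through Noetherianity of $\overline{\mathbb L}^{\oplus s}$ --- at the price of giving no bound on the order $m$ of the resulting operator.
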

\begin{proof}
It suffices to show that for any integer $d>0$ and $f(z)\in\mathbb L$ of the form $f(z) = z^{-d} + \sum_{n=d+1}^\infty a_nz^{-n}$ there exists a monic differential operator $Q(x,\partial_x)$ with $f(z)\cdot Q(x,\partial_x)\in\mathbb L_+$.
Then, by using a product differential operators we can send a basis of $E$ into $\mathbb L_+$ and thus send $E$ itself into $\mathbb L_+$.

Consider an arbitrary  $q(x) = \sum_{k=0}^\infty q_kx^k$. Then 
$$f(z)\cdot q(x) = z^{-d}q_0 + \sum_{\ell=d+1}^\infty\left(\sum_{m=0}^{\ell-d} (-1)^m m!\binom{\ell-1}{m}a_{\ell-m}q_m\right)z^{-\ell},$$
where we set $a_d=1$.
If we choose $q_0=1$ and
$$q_{\ell-d} = \sum_{m=0}^{\ell-d-1}(-1)^{\ell-d-m-1}\frac{(d-1)!}{(\ell-1-m)!}a_{\ell-m}q_m,$$
then $q(x)$ is a unit in $\bbc[[x]]$ and $f(z)\cdot q(x) = z^{-d}$.
Thus $Q(x,\partial_x) := q(x)\partial_x^dq(x)^{-1}$ is a monic differential operator satisfying $f(z)\cdot Q(x,\partial_x)\in\mathbb L_+$.
\end{proof}
We now obtain our characterization of fractional differential operators.
\begin{lem}
\label{char-fdo}
Let $L(x,\partial_x)$ be a pseudodifferential operator.
Then $L(x,\partial_x)$ is a fractional differential operator if and only if
$$\dim(\mathbb L_+ +  \mathbb L_+\cdot L(x,\partial_x))/\mathbb L_+ <\infty.$$
\end{lem}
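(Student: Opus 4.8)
The plan is to prove both implications by reducing to the two facts already established: the criterion that a pseudodifferential operator $M$ is a differential operator exactly when $\mathbb L_+\cdot M\subseteq\mathbb L_+$ (Lemma~\ref{descr-do}), and the fact that any finite-dimensional $E\subseteq\mathbb L$ can be pushed into $\mathbb L_+$ by right multiplication by a monic differential operator (Lemma~\ref{exists-do}). The common engine is the behavior of right multiplication $w\mapsto w\cdot Q$ by a differential operator $Q$ on $\mathbb L=\mathcal P/x\mathcal P$: it always carries $\mathbb L_+$ into $\mathbb L_+$, and the content of the lemma is that, up to finite codimension, having a differential right multiple is the same as lying over $\mathbb L_+$.

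For the direction ($\Leftarrow$), assume $\dim(\mathbb L_+ + \mathbb L_+\cdot L)/\mathbb L_+<\infty$. Lifting a basis of the quotient gives a finite-dimensional $E\subseteq\mathbb L$ with $\mathbb L_+\cdot L\subseteq \mathbb L_+ + E$. By Lemma~\ref{exists-do} choose a monic differential operator $Q(x,\partial_x)$ with $E\cdot Q\subseteq\mathbb L_+$. Since $Q$ is a differential operator, $\mathbb L_+\cdot Q\subseteq\mathbb L_+$ by Lemma~\ref{descr-do}, so
\[
\mathbb L_+\cdot(L Q)=(\mathbb L_+\cdot L)\cdot Q\subseteq(\mathbb L_+ + E)\cdot Q=\mathbb L_+\cdot Q + E\cdot Q\subseteq\mathbb L_+ .
\]
Applying Lemma~\ref{descr-do} in the other direction, $L Q$ is a differential operator, say $P$. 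As $Q$ is monic it is invertible in $\mathcal P$, so $L=P Q^{-1}$ exhibits $L$ as a fractional differential operator. This direction is routine.

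For the direction ($\Rightarrow$), write $L=P Q^{-1}$ with $P,Q$ differential operators, so that $L Q=P$ holds in $\mathcal P$. For every $v\in\mathbb L_+$ we then have $(v\cdot L)\cdot Q=v\cdot P\in\mathbb L_+$, whence
\[
\mathbb L_+\cdot L\subseteq W:=\{\,w\in\mathbb L:\ w\cdot Q\in\mathbb L_+\,\},
\]
and it suffices to prove $\dim(W+\mathbb L_+)/\mathbb L_+<\infty$. Since $w\in\mathbb L_+$ forces $w\cdot Q\in\mathbb L_+$ (Lemma~\ref{descr-do}), we have $\mathbb L_+\subseteq W$ and the target equals $\dim W/\mathbb L_+$. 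When $Q$ has invertible leading coefficient, right multiplication by $Q$ is a bijection of $\mathbb L$ carrying $\mathbb L_+$ into itself with $\dim\mathbb L_+/(\mathbb L_+\cdot Q)=\ord Q$ (a triangular degree count on the polynomials $z^j\cdot Q$), and applying $Q^{-1}$ identifies $W/\mathbb L_+$ with $\mathbb L_+/(\mathbb L_+\cdot Q)$, giving the clean bound $\dim W/\mathbb L_+=\ord Q<\infty$.

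The main obstacle is that the denominator $Q$ coming from the skew field of fractions need not have invertible leading coefficient, in which case right multiplication by $Q$ is neither injective nor surjective on $\mathbb L$ (for instance $1\cdot x=0$). The crux is therefore to show directly that $W$ is commensurable with $\mathbb L_+$. Right multiplication by $Q$ induces a map $W/\mathbb L_+\to\mathbb L_+/(\mathbb L_+\cdot Q)$ whose kernel is controlled by the kernel of $\cdot Q$ on $\mathbb L$, so it suffices to bound $\dim\mathbb L_+/(\mathbb L_+\cdot Q)$ together with the part of that kernel lying outside $\mathbb L_+$. I would establish both by a degree–filtration count on the finite-dimensional pieces $U_m=\{u\in U:\deg u\le m\}$, exactly in the spirit of Lemma~\ref{Kw-second-fin}: for $m$ large, $\cdot Q$ raises degree by a fixed amount $s\le\ord Q$ with eventually nonvanishing top coefficient, so $\dim(\mathbb L_+\cdot Q)_m$ and $\dim W_m$ stabilize relative to $\dim(\mathbb L_+)_m=m+1$. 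Tracking these stabilized differences shows both quotients are finite-dimensional, bounding $\dim W/\mathbb L_+$ in terms of $\ord Q$ and the vanishing order of the leading coefficient of $Q$, which completes the proof.
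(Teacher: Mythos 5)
Your proposal is correct, and your two directions split cleanly against the paper: your ($\Leftarrow$) argument is exactly the paper's (lift a finite-dimensional complement $E$ of $\mathbb L_+$ in $\mathbb L_+ + \mathbb L_+\cdot L$, push it into $\mathbb L_+$ with a monic $Q$ from Lemma \ref{exists-do}, and conclude via Lemma \ref{descr-do} that $LQ$ is differential), but your ($\Rightarrow$) argument is genuinely different. The paper writes $L = L_2^{-1}L_1$ as a \emph{left} fraction and tacitly requires $L_2$ to be invertible in $\mathcal P$ (unit leading coefficient), so that right multiplication by $L_2^{-1}$ acts on $\mathbb L$; finiteness then transports in two lines via $\mathbb L_+\cdot L_2^{-1} = \mathbb L_+\oplus E_2\cdot L_2^{-1}$ with $\dim E_2\le \ord L_2$. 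You instead take a right fraction $LQ=P$ with $Q$ an \emph{arbitrary} nonzero differential operator and prove commensurability of $W=\{w: w\cdot Q\in\mathbb L_+\}$ with $\mathbb L_+$, splitting $\dim W/\mathbb L_+$ into $\dim \mathbb L_+/(\mathbb L_+\cdot Q)$ plus the part of $\ker(\cdot Q)$ outside $\mathbb L_+$. What your route buys is precisely the case your example $1\cdot x=0$ points at: when the leading coefficient of $Q$ vanishes at $x=0$, $Q^{-1}\notin\mathcal P$ and the paper's computation does not literally apply, yet the paper's stated definition of a fractional operator (any representation $PQ^{-1}$, i.e.\ any element of the skew field of fractions) allows such denominators; your argument closes that case, at the cost of being longer. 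The one assertion you leave as a plan---that $\cdot\,Q$ eventually shifts degree by a fixed $d\le\ord Q$ with nonvanishing top coefficient---is true and is the crux: writing the coefficients as $q_k(x)=x^{s_k}u_k(x)$ with $u_k(0)\neq 0$, the action rule $z^j\cdot x^i\partial_x^k = i!\binom{j}{i}z^{j-i+k}$ gives $z^j\cdot Q = p(j)\,z^{j+d}+(\text{lower degree})$ with $d=\max_k(k-s_k)$ and $p(j)=\sum_{k:\,k-s_k=d}u_k(0)\,s_k!\binom{j}{s_k}$, a nonzero polynomial in $j$ (the $s_k$ occurring are distinct), hence with finitely many roots; this single fact yields both finite codimension of $\mathbb L_+\cdot Q$ in $\mathbb L_+$ and finite-dimensionality of $\ker(\cdot Q)$, since the top degree of any kernel element must be a root of $p$. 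With that observation supplied, your proof is complete.
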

\begin{proof}
First, suppose that $L(x,\partial_x)$ is a fractional differential operator.
Then $L(x,\partial_x) = L_2(x,\partial_x)^{-1}L_1(x,\partial_x)$ for some differential operators $L_1$ and $L_2$.
Since $\mathbb L_+\cdot L_2(x,\partial_x)\subseteq \mathbb L_+$, there is a subspace $E_2\subseteq \mathbb L_+$ satisfying $\mathbb L_+\cdot L_2(x,\partial_x)\oplus E_2 = \mathbb L_+$.
By order arguments, $E_2$ is finite dimensional of dimension less than or equal to the order of $L_2$.
Using that $\mathbb L_+\cdot L_2(x,\partial_x)^{-1} = \mathbb L_+ \oplus E_2 \cdot L_2(x,\partial_x)^{-1}$ 
and Lemma \ref{descr-do}, we obtain
\begin{align*}
&\dim(\mathbb L_+ + \mathbb L_+\cdot L(x,\partial_x))/\mathbb L_+ 
\leq \dim(\mathbb L_+ + \mathbb L_+\cdot L_1(x,\partial_x) + E_2  L_1(x,\partial_x) )/\mathbb L_+ 
\\
&= \dim(\mathbb L_+ + E_2  L_1(x,\partial_x) )/\mathbb L_+ 
\leq \dim (E_2 L_1(x,\partial_x))
\leq \dim E_2 < \infty.
\end{align*}

Conversely, assume $L(x,\partial_x)$ is a pseudodifferential operator satisfying
$$\dim(\mathbb L_+ +  \mathbb L_+\cdot L(x,\partial_x))/\mathbb L_+ <\infty,$$
and let $E\subseteq \mathbb L$ be a finite dimensional subspace satisfying $\mathbb L_+ + \mathbb L_+\cdot L(x,\partial_x) = \mathbb L_+ \oplus E$.
By the previous lemma, we can choose a monic differential operator $Q(x,\partial_x)$ satisfying $E\cdot Q(x,\partial_x)\subseteq \mathbb L_+$.
Hence $\mathbb L_+\cdot L(x,\partial_x)Q(x,\partial_x) \subseteq \mathbb L_+$, and by Lemma \ref{descr-do}, $L(x,\partial_x)Q(x,\partial_x)$ a differential operator.
Therefore $L(x,\partial_x)$ is a fractional differential operator.
\end{proof}
\begin{lem}
\label{biject}
Consider $W\in\Gr_+(0)$ with $W = \mathbb L_+\cdot U(x,\partial_x)$ for a pseudodifferential operator $U(x,\partial_x) = 1 + \sum_{n=1}^\infty u_n(x)\partial_x^{-n}$.
For $f(z) \in \bbl$, 
\[
U(x,\partial_x)f(\partial_x)U(x,\partial_x)^{-1}
\]
is a fractional differential operator if and only if $f(z) \in K_W$. 
\end{lem}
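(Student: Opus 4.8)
The plan is to reduce the statement to the characterization of fractional differential operators in Lemma \ref{char-fdo} by transporting the relevant finiteness condition through the pseudodifferential operator $U(x,\partial_x)$. Write $L(x,\partial_x) = U(x,\partial_x) f(\partial_x) U(x,\partial_x)^{-1}$; since $\mathcal P$ is an algebra and $U(x,\partial_x)$ is invertible (its leading term $1$ is a unit of $\bbc[[x]]$), the element $L(x,\partial_x)$ is a genuine pseudodifferential operator, and by Lemma \ref{char-fdo} it is a fractional differential operator if and only if $\dim(\bbl_+ + \bbl_+\cdot L(x,\partial_x))/\bbl_+ < \infty$. Thus the whole lemma will follow once I establish the identity
\[
\dim \frac{\bbl_+ + \bbl_+\cdot L(x,\partial_x)}{\bbl_+} = \dim\frac{W + f(z)W}{W},
\]
since the finiteness of the right-hand side is exactly the defining condition $f(z)\in K_W$.

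To prove this identity I would first record how the right $\mathcal P$-action on $\bbl\cong\mathcal P/x\mathcal P$ interacts with the three operators in play. By associativity of the module action, $\bbl_+\cdot L(x,\partial_x) = \big((\bbl_+\cdot U(x,\partial_x))\cdot f(\partial_x)\big)\cdot U(x,\partial_x)^{-1}$. By definition $W = \bbl_+\cdot U(x,\partial_x)$, and acting by $U(x,\partial_x)^{-1}$ on the right gives $\bbl_+ = W\cdot U(x,\partial_x)^{-1}$. The key elementary computation is that the right action of $f(\partial_x)$ on $\bbl$ is simply multiplication by the Laurent series $f(z)$: from $z^j\cdot\partial_x^m = z^{j+m}$ one obtains $z^j\cdot f(\partial_x) = f(z)z^j$ by linearity, and hence $W\cdot f(\partial_x) = f(z)W$.

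Combining these, $\bbl_+ + \bbl_+\cdot L(x,\partial_x) = W\cdot U(x,\partial_x)^{-1} + (f(z)W)\cdot U(x,\partial_x)^{-1} = (W + f(z)W)\cdot U(x,\partial_x)^{-1}$. Since $U(x,\partial_x)$ is invertible, right multiplication by $U(x,\partial_x)^{-1}$ is a $\bbc$-linear bijection of $\bbl$ that carries $W$ onto $\bbl_+$; it therefore descends to an isomorphism of quotient spaces $(W + f(z)W)/W \cong (\bbl_+ + \bbl_+\cdot L(x,\partial_x))/\bbl_+$, yielding the displayed dimension identity and completing the argument.

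The only genuinely delicate point I anticipate is the verification that the right action of $f(\partial_x)$ equals multiplication by $f(z)$ when $f(z)$ is an infinite series in $\bbc((z^{-1}))$ rather than a polynomial. Here I must check that the resulting doubly infinite sum reorganizes into a well-defined element of $\bbl$; this follows from the one-sided boundedness of the exponents in $\bbc((z^{-1}))$, so that for each fixed power of $z$ only finitely many terms contribute. Everything else is formal manipulation of the module structure together with the bijectivity of right multiplication by $U(x,\partial_x)^{-1}$.
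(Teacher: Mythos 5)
Your proposal is correct and follows essentially the same route as the paper: reduce to Lemma \ref{char-fdo} and establish $\dim(\mathbb L_+ + \mathbb L_+\cdot L)/\mathbb L_+ = \dim(W+f(z)W)/W$ using that the right action of $f(\partial_x)$ is multiplication by $f(z)$ and that right multiplication by $U(x,\partial_x)^{-1}$ is a bijection carrying $W$ onto $\mathbb L_+$. The paper's proof is just a terser version of the same chain of identifications, so your write-up (including the check that the $f(\partial_x)$-action is well defined for Laurent series) matches it in substance.
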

\begin{proof} Denote 
\[
L(x, \partial_x) = U(x,\partial_x)f(\partial_x)U(x,\partial_x)^{-1}.
\]
We have 
\begin{align*}
\dim(\mathbb L_+ + \mathbb L_+\cdot L(x,\partial_x))/\mathbb L_+
  & = \dim(\mathbb L_+ + W\cdot f(\partial_x)U(x,\partial_x)^{-1})/\mathbb L_+\\
  & = \dim(W + W\cdot f(\partial_x))/W\\
  & = \dim(W + f(z)W)/W.
\end{align*}
The stated equivalence now follows from the characterization of fractional differential operators in Lemma \ref{char-fdo}.
\end{proof} 
The next theorem provides a classification of all maximal algebras of commuting fractional differential operators.
\begin{thm}\label{KW and fractional operators}
Let $W\in\Gr_+(0)$ with $W = \mathbb L_+\cdot U(x,\partial_x)$ for a pseudodifferential operator $U(x,\partial_x) = 1 + \sum_{n=1}^\infty u_n(x)\partial_x^{-n}$.
Then the algebra
\[
{\mathscr{K}}_W=\{U(x,\partial_x)f(\partial_x)U(x,\partial_x)^{-1}: f(z)\in K_W\}
\]
is a commutative algebra of fractional differential operators and is maximal if $K_W\neq \bbc$.

All maximal commutative algebras of fractional differential operators containing a normalized fractional differential operator $L(x, \partial_x)$ of nonzero order
arise in this way.
\end{thm}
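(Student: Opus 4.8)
The plan is to reduce everything to two facts established earlier — that $K_W$ is a field (Proposition \ref{Kw-field}) and that $U f(\partial_x) U^{-1}$ is a fractional differential operator exactly when $f(z)\in K_W$ (Lemma \ref{biject}) — together with one new ingredient: the centralizer, inside the ring $\mathcal{P}$ of pseudodifferential operators, of a constant-coefficient operator of nonzero order. Throughout I use the paper's convention that a fractional differential operator has coefficients in $\bbc[[x]]$, hence is an element of $\mathcal{P}$, so that conjugation by the invertible operator $U\in\mathcal{P}$ is always legitimate. For the first assertion I would note that $\mathscr{K}_W$ is the image of $K_W$ under the map $f(z)\mapsto U f(\partial_x)U^{-1}$, which is the composite of the substitution isomorphism $\mathbb{L}\to\mathcal{C}$, $f(z)\mapsto f(\partial_x)$, onto the (commutative) field $\mathcal{C}$ of constant-coefficient pseudodifferential operators, followed by the automorphism $\Ad_U$ of $\mathcal{P}$. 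Hence $\mathscr{K}_W$ is a commutative $\bbc$-algebra — in fact a field isomorphic to $K_W$ — and by Lemma \ref{biject} each of its elements is a fractional differential operator; this part needs no hypothesis on $K_W$.

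The key step, and the main obstacle, is the centralizer computation: \emph{for any $g(z)\in\mathbb{L}$ of nonzero order, the centralizer of $g(\partial_x)$ in $\mathcal{P}$ equals $\mathcal{C}$.} The inclusion $\mathcal{C}\subseteq\mathrm{Cent}_{\mathcal{P}}(g(\partial_x))$ is clear. For the reverse I take $\tilde L=\sum_{k\le N}a_k(x)\partial_x^k\in\mathcal{P}$ commuting with $g(\partial_x)=\sum_{j\le d}c_j\partial_x^j$, where $c_d\neq 0$ and $d\neq 0$, and compare leading coefficients: a direct application of the product rule shows that the coefficient of $\partial_x^{N+d-1}$ in $[\tilde L,g(\partial_x)]$ equals $-d\,c_d\,a_N'(x)$, so $a_N'=0$ and $a_N\in\bbc$. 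Then $\tilde L-c_d^{-1}a_N\,g(\partial_x)\partial_x^{N-d}$ again lies in the centralizer, belongs to $\mathcal{P}$, and has strictly smaller order; descending on the order therefore expresses $\tilde L$ as a constant-coefficient operator. This is classical, but I would include the two-line recursion for completeness.

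With this in hand, maximality follows. Assuming $K_W\neq\bbc$, I first observe that $K_W$ contains an element of nonzero order: any nonconstant $g\in K_W$ of order $0$ may be replaced by $(g-g_0)^{-1}\in K_W$, where $g_0\in\bbc$ is its leading coefficient, and this has positive order. Put $M=U g(\partial_x)U^{-1}\in\mathscr{K}_W$, an operator of nonzero order. If $\mathscr{A}$ is a commutative algebra of fractional differential operators with $\mathscr{K}_W\subseteq\mathscr{A}$ and $L\in\mathscr{A}$, then $L$ commutes with $M$, so $U^{-1}LU$ commutes with $g(\partial_x)$ and hence, by the centralizer computation, equals $f(\partial_x)$ for some $f(z)\in\mathbb{L}$. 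Thus $L=U f(\partial_x)U^{-1}$ is a fractional differential operator, and Lemma \ref{biject} forces $f(z)\in K_W$, i.e. $L\in\mathscr{K}_W$. Therefore $\mathscr{A}=\mathscr{K}_W$.

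Finally, for the converse I start from a maximal commutative algebra $\mathscr{A}$ of fractional differential operators containing a normalized operator $L_0$ of nonzero order $m$. Being monic of order $m\neq 0$, $L_0$ admits a Schur normal form \eqref{Schur}, namely $L_0=U\partial_x^m U^{-1}$ with $U=1+\sum_{j\ge 1}u_j(x)\partial_x^{-j}$; set $W=\mathbb{L}_+\cdot U\in\Gr_+(0)$. Since $L_0=U f(\partial_x)U^{-1}$ with $f(z)=z^m$ is in particular a fractional differential operator, Lemma \ref{biject} gives $z^m\in K_W$, so $K_W\neq\bbc$. Every $L\in\mathscr{A}$ commutes with $L_0$, so the same centralizer argument, now with $g(z)=z^m$, together with Lemma \ref{biject}, shows $L\in\mathscr{K}_W$; hence $\mathscr{A}\subseteq\mathscr{K}_W$. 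As $\mathscr{K}_W$ is itself a commutative algebra of fractional differential operators containing the maximal algebra $\mathscr{A}$, we conclude $\mathscr{A}=\mathscr{K}_W$, as desired.
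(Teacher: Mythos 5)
Your proof is correct and follows essentially the same route as the paper's: Schur conjugation by $U$, the fact that the centralizer of a (conjugated) constant-coefficient pseudodifferential operator of nonzero order consists exactly of (conjugated) constant-coefficient operators, and Lemma \ref{biject} to conclude that the resulting symbol lies in $K_W$, with maximality then closing both directions. The only difference is that you spell out two points the paper treats as implicit or classical --- the explicit leading-coefficient commutator computation establishing the centralizer fact, and the use of the field property of $K_W$ (Proposition \ref{Kw-field}) to produce an element of nonzero order when $K_W\neq\bbc$ --- which fills genuine gaps in the paper's terse write-up rather than changing the argument.
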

\begin{proof}
The algebra ${\mathscr{K}}_W$ is obviously commutative. It follows from Lemma \ref{biject} that 
it consists of fractional differential operators.

If $L'(x,\partial_x)$ is another fractional differential operator commuting with $L(x,\partial_x)$, then 
\[
L'(x,\partial_x) =  U(x,\partial_x)f(\partial_x)U(x,\partial_x)^{-1}
\]
for some $f(z) \in \bbc((z))$. Applying Lemma \ref{biject}, we obtain that $f(z) \in K_W$.
Therefore $L'(x,\partial_x) \in {\mathscr{K}}_W$, and thus, ${\mathscr{K}}_W$ is a maximal commutative algebra of fractional differential operators.

Let ${\mathscr{K}}$ be a maximal commutative algebra of fractional differential operators containing a normalized fractional differential operator $L(x, \partial_x)$ of order $m\neq 0$. 
By Schur's theorem \cite{Schur},
\[
L(x, \partial_x) = U(x,\partial_x) \partial_x^m U(x,\partial_x)^{-1} 
\]
for some pseudodifferential operator $U(x,\partial_x) = 1 + \sum_{n=1}^\infty u_n(x)\partial_x^{-n}$. Therefore, 
${\mathscr{K}}$ is contained in the centralizer of $L(x, \partial_x)$ in the algebra of fractional differential operators, 
which by the first part of the proof is ${\mathscr{K}}_W$ for $W = \bbl_+ \cdot U(x, \partial_x)$.
Since ${\mathscr{K}}_W$ is a maximal commutative algebra of fractional differential operators, 
${\mathscr{K}} = {\mathscr{K}}_W$. 
\end{proof}

\section{Krichever correspondence for fractional differential operators}
In this section we present an extension of the Krichever correspondence from Theorem \ref{ktheorem} to algebras of 
commuting fractional differential operators of rank 1.

Throughout this section, we adopt the following notation.
Let $X$ be an irreducible, reduced algebraic curve (not necessarily projective).
Fix a smooth point $p\in X$ and let $A$ be a $\mathbb C$-algebra with $\spec(A) = X\diff \{p\}$.
If $z^{-1}$ is a uniformizer for the complete local ring $\wh{\mathcal O_{X,p}}$, and $\varphi: \Gamma(U,\mathcal L)\cong \Gamma(U,\mathcal O_{X})$ is a local trivialization of $\mathcal L$ in a neighborhood $U$ of $p$, then the composition
$$\Gamma(X\diff\{p\},\mathcal L)\rightarrow \Gamma(U\diff\{p\},\mathcal L)\xrightarrow{\varphi}\Gamma(U\diff\{p\},\mathcal O_X)\rightarrow (\mathcal O_p)_{\mathfrak m_p} \rightarrow (\wh{\mathcal O_{X,p}})_{\wh{\mathfrak m_p}} = \bbc((z^{-1}))$$
identifies the $A$-module $\Gamma(X\diff\{p\},\mathcal L)$ with a subspace $W\subseteq\mathbb L= \mathbb C((z^{-1}))$ which is a point of Sato's Grassmannian $\Gr$.
As an abuse of notation, we will write $\varphi: \Gamma(X\diff\{p\},\mathcal L)\rightarrow W$ for the isomorphism determined by this composition.
The algebra $A$ is identified via the restriction
$$\Gamma(X\diff\{p\},\mathcal O)\rightarrow\Gamma(U\diff\{p\},\mathcal O_X)\rightarrow (\mathcal O_{X,p})_{\mathfrak m_p} \rightarrow (\wh{\mathcal O_{X,p}})_{\wh{\mathfrak m_p}} = \bbc((z^{-1}))$$
with a subalgebra of $A_W = \{f(z)\in\mathbb L: f(z)W\subseteq W\}$.
In this way, we obtain a Schur pair $(W,A)$ from the Krichever quintuple $(X,\mathcal L,p,z^{-1},\varphi)$.
\begin{remk}
	Geometrically, the localization of the completion of the stalk of the structure sheaf at $p$, $(\wh{\mathcal O_{X,p}})_{\wh{\mathfrak m_p}}$, may be thought of as a ring of algebraic functions on a formal annular neighborhood $Z_p = \spec((\wh{\mathcal O_{X,p}})_{\wh{\mathfrak m_p}})$ centered at $p$, which we refer to later as the \vocab{formal annulus at $p$}.
	Notationally, for any $\mathcal O_X$-module $\mathcal F$ we will use $\Gamma(Z_p,\mathcal F)$ to denote global sections of the pullback of $\mathcal F$ to $Z_p$, i.e. $\Gamma(Z_p,\mathcal F) = \Gamma(X\diff\{p\},\mathcal F)\otimes_A (\wh{\mathcal O_{X,p}})_{\wh{\mathfrak m_p}}$.
\end{remk}

To extend Krichever's construction to commuting algebras of fractional differential operators, we will work with a hextuple of data, consisting of a Krichever quintuple plus a section of $\chi$ of the dual of the semi-infinite jet bundle of $\mathcal L$ over the formal annulus $(\wh{\mathcal O_{X,p}})_{\wh{\mathfrak m_p}}$ at $p$ which extends to a section of the dual of $\mathcal F$ over $X\diff\{p\}$ for some subsheaf $\mathcal F$ of the semi-infinite jet bundle with finite codimension.
The corresponding point in Sato's Grassmannian is given by the image of the sequence of maps
$$\Gamma(X\diff\{p\},\mathcal L)\rightarrow \Gamma(U\diff\{p\},\mathcal L)\rightarrow\Gamma(Z_p,\mathcal L)\xrightarrow{\chi\circ j^\infty}\Gamma(Z_p,\mathcal L) \xrightarrow{\varphi} (\wh{\mathcal O_{X,p}})_{\wh{\mathfrak m_p}} = \mathbb C((z^{-1})).$$

\subsection{Duals of finite jet bundles}
We begin with a more detailed look at duals of bundles of finite jets.
Note that sections of the bundle of $m$-jets over $X\diff\{p\}$ may be expressed in terms of the Schur pair $(W,A)$ above by
$$\Gamma(X\diff\{p\},\mathcal J^n(\mathcal L))\cong (A\otimes_{\mathbb C} W)/I_\Delta^{n+1}(A\otimes_{\mathbb C} W)$$
where here $I_\Delta = \langle \{a\otimes 1-1\otimes a: a\in A\}\rangle$ and the action of $A$ is defined on simple tensors by $a (b\otimes s) = ab\otimes s$.

As in \eqref{linear operator formulation of dual}, sections of the dual $\mathcal J_n(\mathcal L)$ of $\mathcal J^n(\mathcal L)$ correspond to linear differential operators $\mathcal L\rightarrow\mathcal L$ of order $n$.
To make this correspondence very explicit, it is useful to recall Grothendieck's characterization of differential operators in terms of $\Ad$-vanishing linear transformations.
\begin{defn}
Let $M$ be an $R$-module for a commutative $\mathbb C$-algebra $R$ and let $\psi: M\rightarrow M$ be a $\mathbb C$-linear transformation.
Then $M$ is \vocab{$\Ad$-vanishing of order $\leq n$} if
$$[\lambda_{a_1},[\lambda_{a_2},\dots[\lambda_{a_n},[\lambda_{a_{n+1}},\psi]]\dots]]= 0$$
for any sequence $a_1,\dots, a_{n+1}\in R$, where here $\lambda_a: M\rightarrow M$ denotes the linear transformation $\lambda_a(m) = am$ and $[\cdot,\cdot]$ denotes the commutator bracket.
\end{defn}

The following lemma characterizes $\Ad$-vanishing operators as differential operators.
\begin{lem}
Suppose that $\psi$ is a $\bbc$-linear transformation of $W$ which is $\Ad$-vanishing of order $\leq n$.
Then there exist $a_0(z),a_1(z),\dots, a_n(z)\in\mathbb L$ such that 
$$\psi(v(z)) = \sum_{k=0}^n a_k(z)\frac{\partial^k v(z)}{\partial z^k}$$
for all $v(z)\in W$.
\end{lem}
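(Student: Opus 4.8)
The plan is to prove this by induction on the order $n$ of the $\Ad$-vanishing transformation $\psi$, exploiting the Leibniz-type structure that the $\Ad$-vanishing condition forces. The base case $n=0$ says that $\psi$ commutes with every multiplication $\lambda_{a}$ for $a \in \mathbb{L}$; in particular $\psi$ commutes with multiplication by $z$, hence with all $z^k$, so $\psi$ is $\mathbb{L}$-linear. Since $\mathbb{L}$ acts on $W \subseteq \mathbb{L}$ by the restriction of the multiplication on the field $\mathbb{L}$, an $\mathbb{L}$-linear self-map of $W$ is determined by a single scalar $a_0(z) \in \mathbb{L}$ (namely $\psi(v(z)) = a_0(z) v(z)$, where $a_0(z)$ is read off from the image of any nonzero element). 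This establishes the $n=0$ case with no derivative terms, matching the claimed formula.

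For the inductive step, I would consider the commutator $[\lambda_z, \psi] = \lambda_z \circ \psi - \psi \circ \lambda_z$, where $\lambda_z$ is multiplication by $z$. The key observation is that if $\psi$ is $\Ad$-vanishing of order $\leq n$, then $[\lambda_z, \psi]$ is $\Ad$-vanishing of order $\leq n-1$, since nesting one fewer bracket on top of it still produces a length-$(n+1)$ nested commutator involving $\psi$, which vanishes by hypothesis. By the inductive hypothesis, $[\lambda_z, \psi]$ is therefore a differential operator of order $\leq n-1$ in $\partial_z = \tfrac{\partial}{\partial z}$, say with coefficients $b_0(z), \dots, b_{n-1}(z) \in \mathbb{L}$. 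The guiding identity is that multiplication by $z$ and the formal derivative $\partial_z$ satisfy $[\lambda_z, \partial_z^k] = -k \partial_z^{k-1}$; matching the commutator $[\lambda_z, \psi]$ against the commutator of the proposed operator $\sum a_k(z) \partial_z^k$ with $\lambda_z$ lets one solve for the top coefficients $a_1(z), \dots, a_n(z)$ recursively (up to integration constants in $z$, i.e. up to an $\mathbb{L}$-linear ambiguity). Concretely, I would build a candidate operator $D = \sum_{k=1}^n a_k(z)\partial_z^k$ whose commutator with $\lambda_z$ reproduces $[\lambda_z,\psi]$, then show that $\psi - D$ commutes with $\lambda_z$ and hence is $\mathbb{L}$-linear multiplication by some $a_0(z)$, giving the full formula $\psi = \sum_{k=0}^n a_k(z)\partial_z^k$.

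The main subtlety I anticipate is twofold. First, one must verify that the coefficients $a_k(z)$ produced really lie in $\mathbb{L} = \mathbb{C}((z^{-1}))$ and that the operator $D$ genuinely preserves $W$ as a $\mathbb{C}$-linear self-map; here the fact that $\psi$ and $\lambda_z$ are honest self-maps of $W$, together with Lemma \ref{Kw-second-fin}-type finiteness, keeps everything inside the relevant spaces, and the antiderivative ambiguities are absorbed into lower-order coefficients rather than escaping $\mathbb{L}$. Second, and more delicate, is checking that the inductive reconstruction of the coefficients is consistent, that is, that using only the single multiplication $\lambda_z$ captures the full $\Ad$-vanishing condition against \emph{all} of $\mathbb{L}$. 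This works because $\mathbb{L} = \mathbb{C}((z^{-1}))$ is generated as a topological $\mathbb{C}$-algebra by $z$ and $z^{-1}$, and the $\Ad$-vanishing condition is determined by iterated brackets with $\lambda_z$ alone once one knows $\psi$ is a self-map; I expect the careful bookkeeping of which bracket lowers the order, and the verification that the recursion terminates with coefficients in $\mathbb{L}$, to be the step requiring the most attention.
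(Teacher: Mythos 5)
There is a genuine gap here, and it is fatal to the approach as written: you have misidentified the ring with respect to which the $\Ad$-vanishing hypothesis is stated. In this lemma $W$ is an $A$-module, where $A$ is the coordinate ring of the affine curve $X\diff\{p\}$, embedded into $A_W=\{f(z)\in\mathbb{L}: f(z)W\subseteq W\}$; the definition of $\Ad$-vanishing quantifies only over $a_1,\dots,a_{n+1}\in R=A$. It says nothing about brackets with multiplication by $z$, and indeed it cannot: $\lambda_z$ is not a self-map of $W$ unless $z\in A_W$, i.e.\ $zW\subseteq W$, which fails for typical $W$ (for instance, in the elliptic curve example of the paper $A_W=\mathbb{C}[\wp(z^{-1}),\wp'(z^{-1})]$, which does not contain $z$; and $W$ is certainly not an $\mathbb{L}$-module, so your base-case claim that $\psi$ commutes with $\lambda_a$ for \emph{all} $a\in\mathbb{L}$ is not even meaningful). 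Consequently $[\lambda_z,\psi]$ is undefined as an operator on $W$, the recursion built on $[\lambda_z,\partial_z^k]=-k\partial_z^{k-1}$ cannot be set up, and both your base case and your inductive step appeal to information the hypothesis does not provide. Your closing remark that $\mathbb{L}$ is topologically generated by $z$ and $z^{-1}$ does not repair this, because the hypothesis concerns the small ring $A$, not $\mathbb{L}$.

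The repair is essentially the paper's proof, which keeps your induction but brackets against an arbitrary nonconstant element $a\in A$ (such elements exist and genuinely act on $W$). Then $[\lambda_a,\psi]$ is a well-defined operator on $W$, is $\Ad$-vanishing of order $\leq n-1$, and by induction equals $\sum_{k=0}^{n-1}b_k(z)\partial_z^k$ with $b_k(z)\in\mathbb{L}$. Since $a$ is nonconstant, $a'(z)\neq 0$ in $\mathbb{L}$ (characteristic zero), so the triangular system $b_j(z)=-\sum_{k=j+1}^{n}\binom{k}{j}a^{(k-j)}(z)a_k(z)$ can be solved from the top down for $a_n(z),\dots,a_1(z)\in\mathbb{L}$; the difference $\psi-\sum_{k=1}^n a_k(z)\partial_z^k$ then commutes with $\lambda_a$ and is disposed of as in the order-zero case. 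Note also that even the base case is not purely formal: an $A$-linear endomorphism of $W$ is multiplication by an element of $\mathbb{L}$ because $W$ is a torsion-free rank-one $A$-module, so $\End_A(W)$ embeds into the fraction field of $A$, which in turn embeds into $\mathbb{L}$; this uses the geometric setting of the section, not just commutation relations.
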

\begin{proof}
Clearly any linear differential operator of order $\leq n$ will be $\Ad$-vanishing of order $\leq n$, so we need only prove that all $\Ad$-vanishing linear transformations of order $\leq n$ are of this form.

We proceed by induction on $n$.
Suppose that $\psi$ is $\Ad$-vanishing of order $\leq n$.
If $n=0$, then $\psi$ is an $A$-module homomorphism $W\rightarrow W$, so $\psi(v(z)) = a_0(z)v(z)$ for some $a_0(z)\in A_W$.
Inductively, suppose that our statement is true for linear transformations which are $\Ad$-vanishing of order $\leq n-1$ and choose $a\in A\diff\mathbb C$.  Then $[\lambda_a,\psi]$ is $\Ad$-vanishing of order $\leq n-1$, so by assumption
$$[\lambda_a,\psi] = \sum_{k=0}^{n-1} b_k(z)\frac{\partial^k v(z)}{\partial z^k}$$
for some $b_0(z),\dots, b_{n-1}(z)\in \mathbb L$.
Then if we choose $a_0(z),\dots,a_n(z)\in\mathbb L$ to satisfy
$$b_j(z) = -\sum_{k=j+1}^n\binom{k}{j}a^{(k-j)}(z)a_k(z),$$
it follows that
$$\psi(v(z)) = \sum_{k=0}^n a_k(z)\frac{\partial^k v(z)}{\partial z^k},\quad\forall v(z)\in W.$$
\end{proof}

There is a simple bijective correspondence between sections of the dual of the bundle of $n$-jets and $\Ad$-vanishing linear operators of order $\leq n$, as shown by the next lemma.
\begin{lem}
Let $\psi: W\rightarrow W$ be an $\Ad$-vanishing $\bbc$-linear operator of order $\leq n$.
Then the function $\chi: (A\otimes_{\mathbb C} W)/I_\Delta^{n+1}(A\otimes_{\mathbb C} W)\rightarrow W$ defined on simple tensors by
$$\chi(a(z)\otimes v(z)) = a(z)\psi(v(z))$$
for all $v(z)\in W$ is an $A$-module homomorphism $\Gamma(X\diff\{p\},\mathcal J^n(\mathcal L))\rightarrow\Gamma(X\diff\{p\},\mathcal L)$.
Moreover, every element of $\Gamma(X\diff\{p\},\mathcal J_n(\mathcal L))$ is of this form.
\end{lem}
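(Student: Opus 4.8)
The plan is to treat both assertions with a single computational engine, namely a commutation identity relating the diagonal-ideal action to the $\Ad$-bracket. Write $\bar\chi\colon A\otimes_{\bbc}W\to W$ for the $\bbc$-linear map $a\otimes v\mapsto a\,\psi(v)$; it is manifestly a homomorphism for the left $A$-action $a\cdot(b\otimes v)=ab\otimes v$. More generally, for any $\bbc$-linear $\phi\colon W\to W$ set $\bar\chi_\phi(a\otimes v)=a\,\phi(v)$, so that $\bar\chi=\bar\chi_\psi$. The key identity, which I would verify on simple tensors (both sides being $\bbc$-linear in the argument), is that for a generator $g_a=a\otimes 1-1\otimes a$ of $I_\Delta$ and any $\xi\in A\otimes_{\bbc}W$,
\[
\bar\chi_\phi(g_a\cdot\xi)=\bar\chi_{[\lambda_a,\phi]}(\xi),
\]
since $g_a\cdot(b\otimes v)=ab\otimes v-b\otimes av$ and $ab\,\phi(v)-b\,\phi(av)=b\,[\lambda_a,\phi](v)$.

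First I would establish that $\bar\chi$ descends to the quotient by $I_\Delta^{n+1}(A\otimes_{\bbc}W)$. Because $I_\Delta$ is the diagonal ideal of $A\otimes_{\bbc}A$ acting on $A\otimes_{\bbc}W$ and is generated by the $g_a$, the submodule $I_\Delta^{n+1}(A\otimes_{\bbc}W)$ is $\bbc$-spanned by the length-$(n+1)$ products $g_{a_1}\cdots g_{a_{n+1}}\cdot\xi$ (the commutativity of $A\otimes_{\bbc}A$ lets me absorb all scalar coefficients into $\xi$). Iterating the commutation identity $n+1$ times gives
\[
\bar\chi_\psi(g_{a_1}\cdots g_{a_{n+1}}\cdot\xi)=\bar\chi_{[\lambda_{a_{n+1}},[\cdots[\lambda_{a_1},\psi]\cdots]]}(\xi),
\]
and the nested bracket vanishes because $\psi$ is $\Ad$-vanishing of order $\le n$ (the definition quantifies over all sequences, so the reversed order is harmless). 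Hence $\bar\chi$ kills $I_\Delta^{n+1}(A\otimes_{\bbc}W)$ and induces $\chi$; since the left $A$-action descends to $\mathcal J^n(\mathcal L)$ and $\bar\chi$ was $A$-linear, $\chi$ is the desired $A$-module homomorphism $\Gamma(X\diff\{p\},\mathcal J^n(\mathcal L))\to W$.

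For the converse I would run this backwards. Given a section of the dual, i.e.\ an $A$-module homomorphism $\chi\colon\mathcal J^n(\mathcal L)\to\mathcal L$ over $X\diff\{p\}$, define $\psi\colon W\to W$ by $\psi(v)=\chi(j^n(v))$, where $j^n(v)$ denotes the class of $1\otimes v$. Since every class is $[a\otimes v]=a\cdot[1\otimes v]$ and $\chi$ is $A$-linear, $\chi([a\otimes v])=a\,\psi(v)$, which is exactly the claimed form; in particular $\bar\chi_\psi$ equals the composite of $\chi$ with the projection $A\otimes_{\bbc}W\to\mathcal J^n(\mathcal L)$, so $\bar\chi_\psi$ annihilates $I_\Delta^{n+1}(A\otimes_{\bbc}W)$. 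Feeding $\xi=1\otimes w$ into the iterated identity above then yields $[\lambda_{a_{n+1}},[\cdots[\lambda_{a_1},\psi]\cdots]](w)=0$ for all $w$ and all $a_i$, i.e.\ $\psi$ is $\Ad$-vanishing of order $\le n$, completing the correspondence.

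I expect the only real friction to be bookkeeping rather than mathematics: pinning down that $I_\Delta$ is to be read as the diagonal ideal of $A\otimes_{\bbc}A$ acting on the $A\otimes_{\bbc}A$-module $A\otimes_{\bbc}W$ (using $A\subseteq A_W$ so that $b\,v\in W$ for $b\in A$, $v\in W$), and justifying that $I_\Delta^{n+1}(A\otimes_{\bbc}W)$ is $\bbc$-spanned by the products $g_{a_1}\cdots g_{a_{n+1}}\cdot\xi$. Once those identifications are fixed, the commutation identity does all the work in both directions simultaneously.
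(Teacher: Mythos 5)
Your proposal is correct and follows essentially the same route as the paper: both rest on the bijection between $\bbc$-linear maps $\psi\colon W\to W$ and $A$-module homomorphisms $A\otimes_{\bbc}W\to W$, together with the identity expressing the action of products of the generators $a\otimes 1-1\otimes a$ of $I_\Delta$ as iterated brackets $[\lambda_{a_1},[\dots,[\lambda_{a_{n+1}},\psi]]\dots]$, so that descent to the quotient by $I_\Delta^{n+1}(A\otimes_{\bbc}W)$ is equivalent to $\Ad$-vanishing of order $\leq n$. Your version merely packages the paper's one displayed identity as a single-step commutation rule that you iterate, and is somewhat more explicit about the spanning of $I_\Delta^{n+1}(A\otimes_{\bbc}W)$ and about extracting $\psi$ from $\chi$ via $\psi(v)=\chi(j^n(v))$ in the converse direction.
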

\begin{proof}
The $A$-module structure on $A\otimes_{\mathbb C}W$ compatible with the jet bundle structure is defined on simple tensors by $a(b\otimes v) = ab\otimes v$, so any linear transformation $\psi: W\rightarrow W$ defines an $A$-module homomorphism $\chi: A\otimes_{\mathbb C}W\rightarrow A\otimes_{\mathbb C}W$ defined on simple tensors by $\chi(a\otimes v)\mapsto a\psi(v)$.
Thus there is a natural bijective correspondence between linear transformations $\psi: W\rightarrow W$ and $A$-module homomorphisms $A\otimes_{\mathbb C}W\rightarrow W$.

To prove the equivalence between $\Ad$-vanishing linear transformations and sections of the dual of the jet bundle, notice that for any $a_1,\dots,a_n,b\in A$ and $v\in W$
$$\chi\left(\left(\prod_{k=1}^{n+1}(a_k\otimes 1-1\otimes a_k)\right)(b\otimes v)\right) = b[\lambda_{a_1},[\lambda_{a_2},\dots[\lambda_{a_n},[\lambda_{a_{n+1}},\psi]]\dots]](v).$$
Consequently an $A$-module homomorphism $\chi$ defined by a linear transformation $\psi$ descends to the quotient $(A\otimes_{\mathbb C}W)/I_\Delta^{n+1}(A\otimes_{\mathbb C}W)\rightarrow W$ if and only if $\chi$ is $\Ad$-vanishing of order $\leq n$.
\end{proof}

Putting the previous two lemmas together, we have the following characterization of sections of the dual of $\mathcal J^n(\mathcal L)$ over $X\diff\{p\}$.
\begin{thm}
There is a bijective correspondence $L\mapsto \chi_L$ between sections $\chi$ of $\mathcal J_n(\mathcal L)$ over $X\diff\{p\}$ and the set
$$\left\lbrace L(z,\partial_z) = \sum_{k=0}^n a_k(z)\frac{\partial^k}{\partial z^k}: a_0(z),\dots, a_n(z)\in \mathbb L,\ L(z,\partial_z)\cdot W\subseteq W\right\rbrace,$$
satisfying
$$\chi_L(j^nv) = L(z,\partial_z)v(z),\quad \forall v(z)\in W.$$
\end{thm}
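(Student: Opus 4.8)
The plan is to assemble the desired bijection by composing the two correspondences established in the two preceding lemmas. The final theorem asserts a bijection between sections $\chi$ of $\mathcal J_n(\mathcal L)$ over $X\diff\{p\}$ and differential operators $L(z,\partial_z)=\sum_{k=0}^n a_k(z)\partial_z^k$ with $a_k(z)\in\mathbb L$ preserving $W$, subject to the compatibility $\chi_L(j^n v)=L(z,\partial_z)v(z)$. The first preceding lemma identifies $\Ad$-vanishing linear transformations $\psi\colon W\to W$ of order $\leq n$ with exactly such differential operators $L$; the second preceding lemma identifies sections of $\mathcal J_n(\mathcal L)=\mathcal J^n(\mathcal L)^\vee$ over $X\diff\{p\}$ with $\Ad$-vanishing linear transformations of order $\leq n$. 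So my approach is essentially a two-step composition, where the content is entirely bookkeeping to verify that the composite bijection respects the stated formula involving $j^n v$.

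First I would take an arbitrary differential operator $L(z,\partial_z)=\sum_{k=0}^n a_k(z)\partial_z^k$ with $a_k(z)\in\mathbb L$ and $L\cdot W\subseteq W$, and set $\psi_L(v(z))=\sum_{k=0}^n a_k(z)\,\partial_z^k v(z)$. The condition $L\cdot W\subseteq W$ exactly guarantees that $\psi_L$ maps $W$ into $W$, so $\psi_L$ is a $\bbc$-linear endomorphism of $W$, and it is manifestly $\Ad$-vanishing of order $\leq n$ since any differential operator of order $\leq n$ satisfies the iterated-commutator vanishing (this is the easy direction noted at the start of the first lemma's proof). Second, I would feed $\psi_L$ into the second lemma to produce the $A$-module homomorphism $\chi_L\colon (A\otimes_\bbc W)/I_\Delta^{n+1}(A\otimes_\bbc W)\to W$, i.e.\ a section of $\mathcal J_n(\mathcal L)$ over $X\diff\{p\}$, defined on simple tensors by $\chi_L(a(z)\otimes v(z))=a(z)\psi_L(v(z))$.

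The crucial step is then verifying the compatibility formula $\chi_L(j^n v)=L(z,\partial_z)v(z)$. Here I must unwind the definition of the $\infty$-jet (finite $n$-jet) map $j^n$ as given in \eqref{j_infty} and the appendix: the $n$-jet $j^n v$ of a section $v$ corresponds, under the identification $\Gamma(X\diff\{p\},\mathcal J^n(\mathcal L))\cong (A\otimes_\bbc W)/I_\Delta^{n+1}(A\otimes_\bbc W)$, to the class of $1\otimes v(z)$. Applying $\chi_L$ to this class gives $1\cdot\psi_L(v(z))=\psi_L(v(z))=L(z,\partial_z)v(z)$, which is exactly the claimed formula. I expect this identification of $j^n v$ with $1\otimes v$ to be the main obstacle, or at least the only non-formal point: one must confirm that the canonical jet map of a section is the image of $1\otimes v$ under the chosen isomorphism, which requires care with the conventions set in the appendix, but is standard for jet bundles of quasi-coherent sheaves.

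Finally, bijectivity follows formally: the first lemma's last line states that \emph{every} $\Ad$-vanishing linear transformation of order $\leq n$ arises as some $L$, and the second lemma states that \emph{every} section of $\mathcal J_n(\mathcal L)$ arises from such a transformation, so the composite $L\mapsto\chi_L$ is surjective. For injectivity, if $\chi_L=\chi_{L'}$ then evaluating on classes $1\otimes v$ and using the compatibility formula gives $L(z,\partial_z)v(z)=L'(z,\partial_z)v(z)$ for all $v(z)\in W$; since $W\in\Gr$ is infinite-dimensional and contains elements of all sufficiently high degree, the coefficients $a_k(z)$ are determined by the action of $\psi_L$ on $W$ (the induction in the first lemma already extracts them uniquely), forcing $L=L'$. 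Thus the correspondence $L\mapsto\chi_L$ is the asserted bijection.
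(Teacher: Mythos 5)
Your proposal is correct and takes essentially the same route as the paper: the paper's own proof is the single line ``This follows immediately from the previous two lemmas,'' and your argument simply makes explicit the composition of those two correspondences, including the identification of $j^n v$ with the class of $1\otimes v$ and the injectivity of $L\mapsto \psi_L$ (a nonzero operator of order $\leq n$ cannot annihilate the infinite-dimensional space $W$), details the paper leaves implicit.
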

\begin{proof}
This follows immediately from the previous two lemmas.
\end{proof}

\subsection{Dual of the semi-infinite jet bundle and pseudodifferential operators}
\label{5.2}
We now elaborate on the dual of the semi-infinite jet bundle and its connection with pseudodifferential operators.
As defined in \ref{def: weak jet bundle}, the semi-infinite jet bundle of $\mathcal L$ is defined as the colimit
$$\mathcal J^{\infty,0}(\mathcal L) = \varinjlim_n\mathcal J^n(\mathcal L)$$
with respect to the system of canonical injections $\iota_{m,n}: \mathcal J^m(\mathcal L)\rightarrow \mathcal J^n(\mathcal L)$ for $m<n$.

Since $\shom$ anticommutes with colimits in the first slot, it follows that the \vocab{dual of the semi-infinite jet bundle} over $\mathcal L$ is 
$$\shom_{\mathcal O_X}(\mathcal J^{\infty,0}(\mathcal L),\mathcal L) = \varprojlim_n\shom_{\mathcal O_X}(\mathcal J^n(\mathcal L),\mathcal L) = \varprojlim_n \mathcal J_n(\mathcal L) .$$

As we showed above, a section of $\mathcal J^n(\mathcal L)$ over $X\diff\{p\}$ is equivalent to a differential operator $L(z,\partial_z)$ of order $\leq n$ with coefficients in $\mathbb L$.
The projections $\mathcal J_n(\mathcal L)\rightarrow\mathcal J_\ell(\mathcal L)$ correspond to truncations of differential operators $\sum_{k=0}^na_k(z)\frac{\partial^k}{\partial z^k}\mapsto \sum_{k=0}^\ell a_k(z)\frac{\partial^k}{\partial z^k}$.
This gives us the following description of sections of the dual of the semi-infinite jet bundle.

\begin{thm}
There is a bijective correspondence $L\mapsto \chi_L$ between sections $\chi$ of the dual of $\mathcal J^{\infty,0}(\mathcal L)$ over $X\diff\{p\}$ and the set
\begin{equation}
\left\lbrace L(z,\partial_z) = \sum_{k=0}^\infty a_k(z)\partial_z^k: a_k(z)\in\mathbb L,\ \sum_{k=0}^\infty a_k(z)\frac{\partial^k v(z)}{\partial z^k}\in W\ \forall v(z)\in W\right\rbrace,
\end{equation}
where here $\chi_L$ is defined on $\infty$-jets by $\chi_L(j^{\infty} v) = L(z,\partial_z)\cdot v(z)$.
\end{thm}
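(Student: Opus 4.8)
The plan is to establish the bijective correspondence by leveraging the two previous theorems on finite jet bundles and carefully analyzing how the inverse-limit structure on the dual of the semi-infinite jet bundle translates into the data of a formal pseudodifferential-type operator. First I would recall that, by the immediately preceding computation, the dual of the semi-infinite jet bundle is the inverse limit $\varprojlim_n \mathcal J_n(\mathcal L)$, where the transition maps $\mathcal J_n(\mathcal L) \rightarrow \mathcal J_\ell(\mathcal L)$ for $n \geq \ell$ are exactly the truncation maps $\sum_{k=0}^n a_k(z)\partial_z^k \mapsto \sum_{k=0}^\ell a_k(z)\partial_z^k$. A section $\chi$ of the dual over $X\diff\{p\}$ is therefore, by the universal property of the inverse limit, precisely a compatible system $(\chi_\ell)_{\ell \geq 0}$ of sections $\chi_\ell \in \Gamma(X\diff\{p\}, \mathcal J_\ell(\mathcal L))$ with $\chi_n$ truncating to $\chi_\ell$ for all $n \geq \ell$.

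The key step is then to feed this compatible system into the finite-level correspondence established in the preceding theorem. Each $\chi_\ell$ corresponds bijectively to a differential operator $L_\ell(z,\partial_z) = \sum_{k=0}^\ell a_k^{(\ell)}(z)\partial_z^k$ with $a_k^{(\ell)}(z) \in \mathbb L$ satisfying $L_\ell(z,\partial_z)\cdot W \subseteq W$. The compatibility of the $\chi_\ell$ under truncation forces $a_k^{(n)}(z) = a_k^{(\ell)}(z)$ whenever $k \leq \ell \leq n$, so the coefficients stabilize and we may define unambiguously $a_k(z) := a_k^{(\ell)}(z)$ for any $\ell \geq k$. This yields a single formal expression $L(z,\partial_z) = \sum_{k=0}^\infty a_k(z)\partial_z^k$ with $a_k(z) \in \mathbb L$, and conversely any such formal operator decomposes into its truncations $L_\ell$. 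I would verify that the stability condition $L_\ell(z,\partial_z)\cdot W \subseteq W$ for every $\ell$ is exactly the membership condition $\sum_{k=0}^\infty a_k(z)\frac{\partial^k v(z)}{\partial z^k} \in W$ for all $v(z) \in W$ displayed in the statement; here one must observe that although the sum over $k$ is formally infinite, applied to any fixed $v(z) \in W$ it acts as a legitimate element of $\mathbb L$ because the finite-level truncations already land in $W$ and the truncation system is compatible, so the infinite sum is the common value of all sufficiently high truncations. This also pins down the defining formula $\chi_L(j^\infty v) = L(z,\partial_z)\cdot v(z)$, since $\chi_L(j^\infty v)$ is computed as the limit of the values $\chi_\ell(j^\ell v) = L_\ell(z,\partial_z)\cdot v(z)$ under the inverse-limit structure.

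Finally I would confirm that the assignment $L \mapsto \chi_L$ is a bijection by checking injectivity and surjectivity directly from the two halves above: distinct formal operators give distinct coefficient stabilizations hence distinct compatible systems, while any section arises from its associated truncations, which assemble into a unique $L$. The main obstacle I anticipate is not the formal bookkeeping of the inverse limit but rather the careful handling of the convergence or well-definedness of the infinite sum $\sum_{k=0}^\infty a_k(z)\frac{\partial^k v(z)}{\partial z^k}$ as an element of $\mathbb L$ and the verification that the condition ``lands in $W$'' is equivalent to the full compatible system of finite-order stability conditions. In particular, one must ensure that applying $L$ to a fixed element of $W$ does not require summing infinitely many nonzero contributions in each Laurent coefficient, which is where the compatibility of truncations and the structure of $W \in \Gr$ (finite-dimensional kernel and cokernel of $\pi_+$) are essential; making this precise is the technical heart of the argument, with the remainder following formally from the universal property of the inverse limit and the previously established finite-jet theorem.
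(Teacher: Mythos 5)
Your overall skeleton matches the paper's: the paper also obtains this theorem by combining the identification $\shom_{\mathcal O_X}(\mathcal J^{\infty,0}(\mathcal L),\mathcal L) = \varprojlim_n \mathcal J_n(\mathcal L)$, the finite-jet theorem, and the description of the transition maps as truncations, and the compatible-systems bookkeeping you carry out is exactly what the paper leaves implicit. The gap is in the step you yourself single out as the technical heart. You assert that, for fixed $v(z)\in W$, the infinite sum $\sum_{k=0}^\infty a_k(z)\frac{\partial^k v(z)}{\partial z^k}$ is ``the common value of all sufficiently high truncations,'' deducing this from the facts that the truncations land in $W$ and that the system is compatible. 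That inference is a non sequitur: compatibility says the coefficients $a_k(z)$ stabilize, not that the partial sums $\sum_{k\le \ell} a_k(z)\partial_z^k v(z)$ become eventually constant in $\ell$; eventual constancy would mean $a_k(z)\partial_z^k v(z) = 0$ for all large $k$, and nothing you cite provides that.

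Worse, the stabilization claim is false in exactly the situations the theorem is built to handle. Since $\mathbb L = \bbc((z^{-1}))$ is a field, $a_k(z)\partial_z^k v(z) = 0$ forces $a_k(z)=0$ unless $v(z)$ is a polynomial in $z$ of degree less than $k$; so if the partial sums stabilized for every $v(z)\in W$, then as soon as $W$ contains a single element with an infinite $z^{-1}$-tail (the generic Krichever situation, e.g.\ the elliptic curve examples later in the paper) all but finitely many $a_k(z)$ would vanish, and the correspondence would only ever produce operators of finite order. That contradicts the intended content: in the theorem that follows this one, sections over $X\diff\{p\}$ are produced from pseudodifferential operators $D(x,\partial_x)=\sum_j x^j a_j(\partial_x^{-1})$ with $W\cdot D\subseteq W$, for which infinitely many of the terms $a_j(z)\partial_z^j v(z)$ are nonzero. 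The correct mechanism --- recorded in the remark immediately after the theorem in the paper --- is $z^{-1}$-adic, coefficient-wise convergence: for each fixed power $z^n$ only finitely many $k$ contribute to its coefficient, since $\deg\bigl(a_k(z)\partial_z^k v(z)\bigr)\le \deg a_k(z) + \deg v(z) - k$. Your equivalence between the inverse-limit condition (every truncation preserves $W$) and the displayed condition (the full sum lands in $W$) has to be argued through this adic convergence, together with a correspondingly more careful reading of what ``truncation'' means for the transition maps; as written, that step of your proof would fail.
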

\begin{remk}
For any $v(z),a_0(z),a_1(z),\dots\in \mathbb L$, the expression $\sum_{k=0}^\infty a_k(z)\frac{\partial^k v(z)}{\partial z^k}$ is well-defined, since the coefficient of $z^n$ is a sum of at most finitely many terms for all $n\in\mathbb Z$.
\end{remk}

\begin{remk}
Recalling the relationship between differential operators and $\Ad$-vanishing $\mathbb C$-linear transformations, we can realize a section of the dual of the semi-infinite jet bundle as a limit of $\Ad$-vanishing linear transformations, which is simply a linear transformation from $W$ to itself.
\end{remk}

The sections of $\mathcal J^{\infty,0}(\mathcal L)$ over the formal annulus $Z_p$ are simply Laurent series in $z^{-1}$, i.e. $\Gamma(Z_p,\mathcal J^{\infty,0}(\mathcal L))\cong \mathbb L$.
As a consequence, we see that the sections of the dual of $\mathcal J^{\infty,0}(\mathcal L)$ over the formal annulus $Z_p$ are given by differential operators of infinite order with coefficients in $\mathbb L$.

Now consider the ring $\mathcal P$ of pseudodifferential operators with coefficients in the ring of formal power series in $x$.  We can express an element $D(x,\partial_x)\in \mathcal P$ as 
$$D(x,\partial_x) = \sum_{k=-N}^\infty b_k(x)\partial_x^{-k} = \sum_{j=0}^\infty x^ja_j(\partial_x^{-1})$$
for some $b_k(x)\in\bbc[[x]]$ or $a_j(z^{-1})\in \bbc((z^{-1}))$.
To any such differential operator we associate the formal section of the dual of $\mathcal J^{\infty,0}(\mathcal L)$ defined on $\infty$-jets by
$$\chi_D(j^\infty v(z)) = \sum_{j=0}^\infty a_j(z)\frac{\partial^j v(z)}{\partial z^j}.$$
This is a section of the dual of $\mathcal J^{\infty,0}(\mathcal L)$ over the formal annulus $Z_p$, but in general is not a well-defined section over any affine open subset of $X$.

Now consider the case when $\mathcal L$ has trivial cohomology, so that $W=\Gamma(X\diff\{p\},\mathcal L)$ is an element of $\Gr_+(0)$.  Choose $Q(x,\partial_x)=1+ \sum_{k=1}^\infty b_k(x)\partial_x^{-k}$ satisfying $W = \mathbb L_+\cdot Q(x,\partial_x)$.  We can characterize the pseudodifferential operators that $Q(x,\partial_x)$ conjugates to differential operators as those corresponding to sections $\chi_D$ of the dual of $\mathcal J^{\infty,0}(\mathcal L)$ over the formal annulus which extend to sections over $X\diff\{p\}$.
\begin{thm}
Let $D(x,\partial_x) = \sum_{j=0}^\infty x^ja_j(\partial_x^{-1})\in \mathcal P$ be a pseudodifferential operator.
Then $L(x,\partial_x) = Q(x,\partial_x)D(x,\partial_x)Q(x,\partial_x)^{-1}$ is a differential operator if and only if there exists a section of $\chi$ of the dual of $\mathcal J^{\infty,0}(\mathcal L)$ over $X\diff\{p\}$ satisfying
$$\chi(j^\infty v) = \sum_{j=0}^\infty a_j(z)\frac{\partial^j v(z)}{\partial z^j}.$$
\end{thm}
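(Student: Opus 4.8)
The plan is to reduce the statement to the characterization of differential operators inside $\mathcal{P}$ given in Lemma \ref{descr-do}, and then to the description of sections of the dual of $\mathcal{J}^{\infty,0}(\mathcal{L})$ over $X\diff\{p\}$ established in the preceding theorem. By Lemma \ref{descr-do}, the operator $L(x,\partial_x) = Q(x,\partial_x)D(x,\partial_x)Q(x,\partial_x)^{-1}$ is a differential operator if and only if $\mathbb{L}_+\cdot L(x,\partial_x)\subseteq \mathbb{L}_+$. My first step is to rewrite the left-hand side using the right $\mathcal{P}$-module structure on $\mathbb{L}$ together with the defining relation $W = \mathbb{L}_+\cdot Q(x,\partial_x)$. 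Since the right action is associative,
\[
\mathbb{L}_+\cdot L(x,\partial_x) = (\mathbb{L}_+\cdot Q(x,\partial_x))\cdot D(x,\partial_x)\cdot Q(x,\partial_x)^{-1} = (W\cdot D(x,\partial_x))\cdot Q(x,\partial_x)^{-1}.
\]
Because $Q(x,\partial_x)$ is invertible in $\mathcal{P}$ (its leading coefficient is the unit $1$), right multiplication by $Q(x,\partial_x)$ is a bijection of $\mathbb{L}$ carrying $\mathbb{L}_+$ onto $W$; hence $\mathbb{L}_+\cdot L(x,\partial_x)\subseteq \mathbb{L}_+$ is equivalent to $W\cdot D(x,\partial_x)\subseteq \mathbb{L}_+\cdot Q(x,\partial_x)=W$. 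This reduces the entire theorem to the single condition $W\cdot D(x,\partial_x)\subseteq W$.

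The second step is to identify the right action of $D(x,\partial_x)$ on $\mathbb{L}$ with the infinite-order differential operator in $z$ that appears in the preceding theorem. Here I would use the two basic action rules $v(z)\cdot\partial_x^{m}=z^{m}v(z)$ and $v(z)\cdot x^{k}=\tfrac{\partial^{k}v(z)}{\partial z^{k}}$; the latter is simply the identity $z^{j}\cdot x^{k}=k!\binom{j}{k}z^{j-k}=\tfrac{\partial^{k}}{\partial z^{k}}z^{j}$ extended by linearity. Writing $D(x,\partial_x)=\sum_{j\ge 0}x^{j}a_j(\partial_x^{-1})$ and invoking associativity of the action, I obtain
\[
v(z)\cdot D(x,\partial_x) = \sum_{j=0}^\infty \bigl(v(z)\cdot x^{j}\bigr)\cdot a_j(\partial_x^{-1}) = \sum_{j=0}^\infty a_j(z)\frac{\partial^{j}v(z)}{\partial z^{j}},
\]
which is exactly the expression $\chi(j^\infty v)$ in the statement.

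Finally, I would appeal to the preceding theorem directly: it provides a bijection between sections $\chi$ of the dual of $\mathcal{J}^{\infty,0}(\mathcal{L})$ over $X\diff\{p\}$ and infinite-order operators $\sum_j a_j(z)\partial_z^{j}$ with $a_j(z)\in\mathbb{L}$ whose action carries $W$ into $W$, under the rule $\chi(j^\infty v)=\sum_j a_j(z)\tfrac{\partial^j v}{\partial z^j}$. Applied to the operator produced in the second step, a section $\chi$ with the prescribed formula exists if and only if $\sum_j a_j(z)\tfrac{\partial^j v}{\partial z^j}=v(z)\cdot D(x,\partial_x)\in W$ for every $v(z)\in W$, i.e. if and only if $W\cdot D(x,\partial_x)\subseteq W$. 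Combining this with the equivalence from the first step yields the theorem.

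I expect the only genuinely delicate point to be the bookkeeping in the second step: one must verify that the infinite sum $\sum_j a_j(z)\tfrac{\partial^j v}{\partial z^j}$ is well defined coefficientwise in $\mathbb{L}$ and that the reorganization $D=\sum_k b_k(x)\partial_x^{-k}=\sum_j x^j a_j(\partial_x^{-1})$ is compatible with the right action term by term. Both facts follow from $D\in\mathcal{P}$ together with the finiteness observation recorded in the remark after the preceding theorem, but they should be stated carefully so that the identification $v(z)\cdot D(x,\partial_x)=\sum_j a_j(z)\tfrac{\partial^j v}{\partial z^j}$ is unambiguous. The algebraic reduction in the first step and the invocation of the preceding theorem are then purely formal.
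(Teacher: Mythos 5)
Your proposal is correct and follows essentially the same path as the paper's proof: both reduce the statement, via Lemma \ref{descr-do} and the fact that right multiplication by the invertible operator $Q(x,\partial_x)$ carries $\mathbb L_+$ bijectively onto $W$, to the single condition $W\cdot D(x,\partial_x)\subseteq W$, and both then use the identification $v(z)\cdot D(x,\partial_x)=\sum_{j}a_j(z)\partial_z^{j}v(z)$ together with the preceding theorem's correspondence between $W$-preserving infinite-order operators in $z$ and sections of the dual of $\mathcal J^{\infty,0}(\mathcal L)$ over $X\diff\{p\}$. The paper's write-up merely leaves these reductions implicit, so your argument is the same one spelled out in more detail.
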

\begin{proof}
Suppose $L(x,\partial_x)$ is a differential operator.
Then we know that $\mathbb L_+\cdot L(x,\partial_x)\subseteq \mathbb L_+$ and therefore $W\cdot D(x,\partial_x)\subseteq W$.
In particular, the section $\chi$ of the dual of $\mathcal J^{\infty,0}$ over $X\diff\{p\}$ defined by
$$\chi(j^\infty(v(z))) = v(z)\cdot D,\ \forall v(z)\in W$$
is well-defined.

Conversely, suppose that a section $\chi$ of the dual of $\mathcal J^{\infty,0}(\mathcal L)$ over $X\diff\{p\}$ satisfying the property stated in the theorem.
Then $\chi(j^{\infty}(v(z)))\in  W$ for all $v(z)\in W$ and therefore $W\cdot D(x,\partial_x)\subseteq W$.
It follows that $\mathbb L_+\cdot L(x,\partial_x)\subseteq \mathbb L_+$ so that $L(x,\partial_x)$ is a differential operator.
\end{proof}

As a consequence of this theorem, the pseudodifferential operators conjugating to fractional differential operators under $Q(x,\partial_x)$ are going to correspond to sections of the dual of $\mathcal J^{\infty,0}(\mathcal L)$ defined locally by
$$\chi(j^\infty(\chi_{D_1}(j^\infty(v(z))))) = \chi_{D_2}(j^\infty(v(z)))$$
for some pseudodifferential operators $D_1,D_2\in\mathcal P$ satisfying $W\cdot D_j\subseteq W$ for $j=1,2$.
Equivalently, these are pseudodifferential operators $D(x,\partial_x) = D_1(x,\partial_x)^{-1}D_2(x,\partial_x)$ satisfying $W'\cdot D(x,\partial_x)\subseteq W$ for some subspace $W'\subseteq W$ of finite codimension.
This is equivalent to an element of the dual of a subsheaf $\mathcal F\subseteq J^{\infty,0}(\mathcal L)$ of finite codimension.

\begin{defn}
\label{fract-sect}
A \vocab{rational section} of the dual of $\mathcal J^{\infty,0}(\mathcal L)$ over $U\subseteq X$ is a section $\chi$ of the dual of the semi-infinite jet bundle over the formal annulus $Z_p$, which extends to a section of the dual of $\mathcal F$ over $X\diff\{p\}$ for some submodule $\mathcal F$ of $\mathcal J^{\infty,0}(\mathcal L)$ of finite codimension.
\end{defn}

In algebraic geometry a rational morphism between schemes is a map from a dense open subset of the first scheme to the second one. From this point of view,  
a rational section of the dual of the jet bundle is 
the same thing as a rational morphism from the underlying scheme to the one associated to the scheme theoretic vector bundle. 

With this definition in mind, we have the following characterization of fractional differential operators.

\begin{thm}\label{cool catz}
Let $D(x,\partial_x)\in \mathcal P$ be a pseudodifferential operator.
Then $L(x,\partial_x) = Q(x,\partial_x)D(x,\partial_x)Q(x,\partial_x)^{-1}$ is a fractional differential operator if and only if there is a rational section 
$\chi$ of the dual of $\mathcal J^{\infty,0}(\mathcal L)$ over $X\diff\{p\}$ satisfying $\chi(j^\infty(v)) = \chi_D(j^\infty(v))$ for all $v(z)\in W$ with $j^\infty(v)$ in the domain of $\chi$.
\end{thm}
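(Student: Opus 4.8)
The plan is to reduce the statement to the intrinsic characterization of fractional differential operators in Lemma~\ref{char-fdo} and then match the two finiteness conditions on either side of the claimed equivalence. First I would record the conjugation identity already used in Lemma~\ref{biject}. Since $W = \mathbb L_+\cdot Q(x,\partial_x)$, right multiplication by $Q(x,\partial_x)$ is a bijection of $\mathbb L$ carrying $\mathbb L_+$ onto $W$, and from $L = QDQ^{-1}$ one gets
\[
\mathbb L_+ + \mathbb L_+\cdot L(x,\partial_x) = (W + W\cdot D(x,\partial_x))\cdot Q(x,\partial_x)^{-1},
\]
whence
\[
\dim(\mathbb L_+ + \mathbb L_+\cdot L)/\mathbb L_+ = \dim(W + W\cdot D)/W.
\]
Combined with Lemma~\ref{char-fdo}, this shows $L$ is a fractional differential operator if and only if $\dim(W + W\cdot D)/W < \infty$. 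Using the right module action one has $v(z)\cdot D = \chi_D(j^\infty v)$, so this finiteness condition reads $\dim(W + \chi_D(j^\infty W))/W < \infty$; the theorem will follow once I translate it into the existence of a rational section.

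Next I would prove the linear-algebraic equivalence
\[
\dim(W + W\cdot D)/W < \infty \iff \exists\, W'\subseteq W \text{ of finite codimension with } W'\cdot D\subseteq W.
\]
For the forward direction, the $\bbc$-linear map $v\mapsto \overline{v\cdot D}\in(W+W\cdot D)/W$ has image of dimension $\dim(W+W\cdot D)/W$ and kernel $W' := \{v\in W: v\cdot D\in W\}$, so rank--nullity gives $\dim W/W' = \dim(W+W\cdot D)/W$; the converse is immediate since if $\dim W/W' = c$ and $E$ is a complement then $W\cdot D\subseteq W + E\cdot D$ with $\dim E\cdot D\leq c$. This $W'$ is exactly the subspace on which $\chi_D$ lands in $\mathcal L$, hence the domain of the sought rational section.

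For the implication that $L$ fractional yields a rational section I would make $W'$ explicit by a factorization. Writing $L = L_2^{-1}L_1$ with $L_1,L_2$ differential operators and setting $D_i := Q^{-1}L_iQ$, one has $D = D_2^{-1}D_1$, and by Lemma~\ref{descr-do},
\[
W\cdot D_i = \mathbb L_+\cdot Q\cdot Q^{-1}L_iQ = (\mathbb L_+\cdot L_i)\cdot Q \subseteq \mathbb L_+\cdot Q = W.
\]
Then $W' := W\cdot D_2 = (\mathbb L_+\cdot L_2)\cdot Q$ has codimension $\ord(L_2)<\infty$ in $W$, and $W'\cdot D = W\cdot D_1\subseteq W$. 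The subsheaf of $\mathcal L$ whose sections over $X\diff\{p\}$ are $W'$ produces, via $j^\infty$, a finite-codimension submodule $\mathcal F\subseteq\mathcal J^{\infty,0}(\mathcal L)$ on which $\chi_D$ restricts to a genuine section of the dual over $X\diff\{p\}$, realized by the composite $\chi(j^\infty(\chi_{D_2}(j^\infty v))) = \chi_{D_1}(j^\infty v)$; this is the required rational section. Conversely, a rational section agreeing with $\chi_D$ has domain $\Gamma(X\diff\{p\},\mathcal F)$ for some finite-codimension $\mathcal F$, yielding a finite-codimension $W'\subseteq W$ with $W'\cdot D\subseteq W$, whence $\dim(W+W\cdot D)/W<\infty$ and $L$ is fractional by the first paragraph.

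The main obstacle I anticipate is not the linear algebra but the sheaf-theoretic bookkeeping: verifying that the finite-codimension subspace $W'\subseteq W = \Gamma(X\diff\{p\},\mathcal L)$ really corresponds to a genuine finite-codimension $\mathcal O_X$-submodule $\mathcal F$ of the semi-infinite jet bundle (rather than merely a $\bbc$-linear subspace), and that the notion of ``finite codimension'' matches on the two sides. I would dispatch this using that $W' = (\mathbb L_+\cdot L_2)\cdot Q$ is the section module of the invertible subsheaf cut out by the differential operator $L_2$, so that its $\infty$-jet prolongation is an honest submodule of finite colength, and then appeal to the jet-bundle constructions recalled in the appendix to identify $\chi_D|_{\mathcal F}$ with a morphism of $\mathcal O_X$-modules.
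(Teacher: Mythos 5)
Your proof is correct and follows essentially the same route as the paper's: both factor $L$ as a ratio of differential operators, conjugate by $Q$ to obtain operators $D_i$ with $W\cdot D_i\subseteq W$, take $W'$ to be the image of $W$ under the denominator so that $W'$ has finite codimension and $W'\cdot D\subseteq W$, let $\mathcal F$ be the submodule generated by the $\infty$-jets of $W'$, and deduce the converse from Lemma~\ref{char-fdo} via the conjugation identity underlying Lemma~\ref{biject}. Your explicit rank--nullity step and your flagging of the subspace-versus-submodule bookkeeping are minor elaborations of points the paper leaves implicit, not a different argument.
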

\begin{proof}
If $L(x,\partial_x)$ is a fractional differential operator, then there exist differential operators $L_1(x,\partial_x)$ and $L_2(x,\partial_x)$ satisfying $L(x,\partial_x) = L_1(x,\partial_x)^{-1}L_2(x,\partial_x)$.  Let $D_j(x,\partial_x) = Q(x,\partial_x)^{-1}L_j(x,\partial_x)Q(x,\partial_x)$.
Then in particular $W\cdot D_j(x,\partial_x) \subseteq W$ for $j=1,2$.

The subspace $W' = W\cdot D_1 \subseteq W = \Gamma(X\diff\{p\},\mathcal L)$ has finite codimension and $W'\cdot D_1^{-1}= W$ and therefore $W'\cdot D = W'\cdot D_1^{-1}D_2 = W\cdot D_2\subseteq W$.  Let $\mathcal F$ be the submodule of $\mathcal J^{\infty,0}(\mathcal L)$ on $X\diff\{p\}$ generated by $\{j^\infty(v) | v\in W'\}$.
Then $\chi = \chi_D$ extends to a section of the dual of $\mathcal F$ over $X\diff\{p\}$ and thus defines a rational section of the dual of $\mathcal J^{\infty,0}(\mathcal L)$ over $U\subseteq X$.

Conversely, suppose that $\chi$ is a rational section of the dual with $\chi(j^\infty v) = \chi_D(j^\infty v)$ for all $v\in W'$ for some subspace $W'\subseteq W$ of finite codimension.
Then $W'$ has finite codimension in $W$ and we have $W'\cdot D \subseteq W$.
Consequently,
$$\dim(\mathbb L_+ + \mathbb L_+\cdot L)/\mathbb L_+ < \infty.$$
Thus $L(x,\partial_x)$ is a fractional differential operator.
\end{proof}

\subsection{Fractional Krichever correspondence}

The characterization of fractional differential operators in terms of rational sections of the dual of the semi-infinite jet bundle allows us to extend Krichever correspondence to the case of fractional differential operators.
To begin, recall that a rank of an algebra $\mathscr A$ of differential operators is the greatest common divisor of the orders of operators in $\mathcal A$.
In particular, a rank $1$ algebra of commuting differential operators is one containing two operators of relatively prime order.
However, this is \emph{not} the appropriate definition of rank for algebras of fractional differential operators.
For example, if we take any two differential operators $L_1(x,\partial_x)$ and $L_2(x,\partial_x)$ of orders $n$ and $n+1$, respectively, then $L(x,\partial_x) = L_1(x,\partial_x)^{-1}L_2(x,\partial_x)$ is a fractional differential operator of order $1$.
Hence the algebra $\mathbb C[L(x,\partial_x)]$ has what we would call rank $1$ in the differential operator case, even though it really only carries the information of the single operator $L(x,\partial_x)$.

The right notion of the rank of a commutative algebra of fractional differential operators is obtained from understanding the geometric nature of rank in the differential operator setting.
Geometrically, the rank of an algebra of commuting differential operators should be the rank of the corresponding vector bundle over the spectral curve.
In other words, if $\mathscr A$ is a commutative algebra of differential operators and $Q(x,\partial_x) = 1+\sum_{k=1}^\infty b_k(x)\partial_x^{-k}$ is a pseudodifferential operator with $A = Q(x,\partial_x)^{-1}\mathscr A Q(x,\partial_x)\subseteq\mathbb L$, then the rank of $\mathscr A$ is precisely the rank of the torsion-free $A$-module $W = \mathbb L_+\cdot Q(x,\partial_x)$.
Then since maximal commutative algebras $\mathscr K_W$ of fractional differential operators are fields, we can interpret the dimension of the $\mathscr K_W$-span of $W$ as the generic rank of a vector bundle.

Thus we generalize this to the fractional case in the following way.
Any algebra of commuting fractional differential operators is a subalgebra of a \emph{maximal} algebra of commuting fractional differential operators, which are characterized by Theorem \ref{KW and fractional operators} to all be of the form
$${\mathscr{K}}_W=\{U(x,\partial_x)f(\partial_x)U(x,\partial_x)^{-1}: f(z)\in K_W\},$$
for some point $W\in\Gr_+(0)$ with $W = \mathbb L_+\cdot U(x,\partial_x)$ for a pseudodifferential operator $U(x,\partial_x) = 1 + \sum_{n=1}^\infty u_n(x)\partial_x^{-n}$.
Here $K_W$ is the subfield of $\mathbb L$ that we associated to a point $W\in\Gr_+(0)$ in Section \ref{4.1} by
$$K_W = \{f(z)\in\mathbb L: \dim (W + f(z)W)/W < \infty\}.$$
\begin{defn}\label{rank of point}
We define the \vocab{rank of a point $W\in\Gr$} to be the dimension of the $K_W$-subspace of $\mathbb L$ spanned by $W$.
\end{defn}

Note that by Proposition \ref{AW vs KW}, when $A_W$ is nontrivial the field $K_W$ is the fraction field of $A_W$.
Hence in this case the rank of $A_W$ is precisely the rank of $W$.
Furthermore, as the next lemma shows, the finite rank points $W$ are precisely those defining maximal commutative algebras of fractional differential operators.
\begin{lem}
Let $W\in \Gr$.  Then the rank of $W$ is finite if and only if $\mathbb C\subsetneq K_W$.
\end{lem}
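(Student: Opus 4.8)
I would prove the two implications separately; the reverse one is essentially immediate, while the forward one carries all the content. For the reverse direction I argue by contraposition: if $K_W=\bbc$, then the $K_W$-subspace of $\mathbb{L}$ spanned by $W$ is nothing but the $\bbc$-span of $W$, which is $W$ itself. Since every $W\in\Gr$ is infinite dimensional over $\bbc$ (its image $\pi_+(W)$ has finite codimension in $\mathbb{L}_+=\bbc[z]$, so $W$ is infinite dimensional), the rank is infinite. Hence finite rank forces $\bbc\subsetneq K_W$.

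\textbf{Forward direction: producing a good uniformizer and the valuation picture.} Assume $\bbc\subsetneq K_W$. Using that $K_W$ is a field (Proposition \ref{Kw-field}), I would first normalize a nonconstant element into one of positive degree: given $f\in K_W\setminus\bbc$, if $\deg f\geq 1$ keep it, if $\deg f\leq -1$ replace $f$ by $f^{-1}\in K_W$, and if $\deg f=0$ with leading (constant) coefficient $c$, replace $f$ by $(f-c)^{-1}\in K_W$, which has positive degree. Thus I may fix $g\in K_W$ of minimal positive degree $e:=\deg g\geq 1$. Every element of $K_W$ then has degree in $e\bbz$ and leading coefficient in $\bbc$, so $K_W$ is a discretely valued subfield of $\mathbb{L}=\bbc((z^{-1}))$ with value group $e\bbz$ and residue field $\bbc$; its completion inside $\mathbb{L}$ is $\bbc((g^{-1}))$, and $[\mathbb{L}:\bbc((g^{-1}))]=e$ (a totally ramified extension with trivial residue extension). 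Consequently the span of $W$ over the \emph{completed} field is finite dimensional: $\dim_{\bbc((g^{-1}))}\!\big(\bbc((g^{-1}))\cdot W\big)\leq[\mathbb{L}:\bbc((g^{-1}))]=e$. The goal of the forward direction is to transfer this finiteness down to $K_W$ itself, and I expect the true value to be $\mathrm{rank}(W)=e$; the lower bound $\mathrm{rank}(W)\geq e$ is the easy half, obtained from the associated graded of the degree filtration on $V:=K_W\cdot W$, which has one dimension in every degree (because $W\in\Gr$ realizes all sufficiently large degrees and $V=gV$ shifted by $e$), so that $\mathrm{gr}(V)$ is free of rank $e$ over $\mathrm{gr}(K_W)\cong\bbc[t,t^{-1}]$.

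\textbf{Reduction of the finiteness to a statement over $\bbc(g)$.} Since $\bbc(g)\subseteq K_W$, any $\bbc(g)$-spanning set of $\bbc(g)\cdot W$ also spans $V=K_W\cdot W$ over $K_W$, so $\mathrm{rank}(W)=\dim_{K_W}V\leq\dim_{\bbc(g)}\!\big(\bbc(g)\cdot W\big)$. Thus it suffices to show that $W$ has \emph{finite rank over $\bbc[g]$}, equivalently that the $\bbc(g)$-span of $W$ in $\mathbb{L}$ is finite dimensional. In the classical subcase where $A_W\neq\bbc$ this is straightforward: by Proposition \ref{AW vs KW}, $K_W=\mathrm{Frac}(A_W)$, and $W$ is a finitely generated torsion-free module over the affine coordinate ring $A_W$ (which is finite over $\bbc[g]$ once $g\in A_W$ is a nonconstant function), so its generic rank is finite and equals $\mathrm{rank}(W)$. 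The genuinely new case is $A_W=\bbc$ with $K_W\neq\bbc$ (as in Example \ref{trivial AW example}), where $g\notin A_W$ and $gW\not\subseteq W$.

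\textbf{The main obstacle.} The hard point is precisely the descent just flagged: passing from the completion $\bbc((g^{-1}))$ (or merely from $\bbc(g)$) down to $K_W$ does \emph{not} automatically preserve finiteness, because a smaller field can only increase dimensions, and indeed for the incomplete field $K_W$ one cannot invoke equivalence of norms on finite dimensional spaces. Concretely, I must rule out that $W$ contains an infinite set of vectors that are $K_W$-linearly independent yet become dependent over $\bbc((g^{-1}))$; this is exactly where the Grassmannian condition (finiteness of $\ker\pi_+|_W$ and $\coker\pi_+|_W$) has to enter, and it fails for a general subspace such as $\mathbb{L}$ itself. My plan for this step is to analyze the $\bbc[g]$-module $M:=\sum_{k\geq0}g^{k}W$: using the finite-dimensionality of $(W+g^{k}W)/W$ supplied by Lemma \ref{Kw-second-fin} (applied to $g^{k}\in K_W$) together with the finite index data of $\pi_+$, I would bound $\dim_\bbc\big(M\cap\mathbb{L}_{\leq N}\big)$ and show $M$ has finite rank over $\bbc[g]$, via a Hankel/Kronecker-type rationality argument that a hypothetical $\bbc((g^{-1}))$-relation among elements of $W$ forces a genuine relation with coefficients in $\bbc(g)$. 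This finite-generation/descent step is where I expect essentially all of the difficulty of the lemma to reside; once it is in place, combining $\mathrm{rank}(W)\leq\dim_{\bbc(g)}(\bbc(g)W)<\infty$ with the lower bound $\mathrm{rank}(W)\geq e$ completes the forward implication.
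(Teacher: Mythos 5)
Your reverse direction is correct, and so is your reduction of the forward direction: since $\bbc(g)\subseteq K_W$, a finite $\bbc(g)$-spanning set for $\bbc(g)\cdot W$ also spans $K_W\cdot W$ over $K_W$, so it suffices to prove $\dim_{\bbc(g)}\bigl(\bbc(g)\cdot W\bigr)<\infty$. The genuine gap is that this one assertion, which carries all the content of the lemma, is never proved: you explicitly defer it (``I would bound \dots and show \dots via a Hankel/Kronecker-type rationality argument'') and acknowledge that ``essentially all of the difficulty'' resides there. The completion bound $\dim_{\bbc((g^{-1}))}\bigl(\bbc((g^{-1}))\cdot W\bigr)\le e$ does not help, as you yourself observe, because dimension can only grow when passing to the smaller field $\bbc(g)$. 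So the proposal as it stands is a correct reduction plus an unproven claim, not a proof.

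The missing step has a short, elementary proof --- this is what the paper does, and neither the completion nor any Hankel/Kronecker rationality argument is needed. Since $g\in K_W$, the space $(W+gW)/W$ is finite dimensional, so the kernel $W'=\{w\in W: gw\in W\}$ of the map $w\mapsto gw+W$ has finite codimension in $W$; write $W=W'\oplus E$ with $\dim_\bbc E<\infty$, and let $\wt E$ be the $\bbc(g)$-span of $E$. Because $W\in\Gr$, you can pick $v_j(z)\in W$ with $\deg v_j=j$ for every $j\ge 0$ outside a finite set $J$, and these together with the finite-dimensional $\ker\pi_+|_W$ span $W$ over $\bbc$. Now compare degrees: for $k$ large, write $v_{k-e}=w'+\epsilon$ with $w'\in W'$, $\epsilon\in E$; then $gw'=gv_{k-e}-g\epsilon$ lies in $W$, has degree exactly $k$, and is congruent to $gv_{k-e}$ modulo $\wt E$, so subtracting a suitable scalar multiple of $gw'$ from $v_k$ strictly lowers the degree while staying in $W+\wt E$. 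Descending induction on $k$ then shows that $\bbc(g)\cdot W$ is spanned over $\bbc(g)$ by $E$, $\ker\pi_+|_W$, and the finitely many $v_j$ of small index, which is precisely the finiteness you needed. This degree-filtration argument is exactly where the Grassmannian condition on $W$ enters; your proposal stops just short of it.
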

\begin{proof}
Note $\mathbb C\subseteq K_W$ for all $W\in\Gr$ and if $\mathbb C=K_W$, then $W$ has infinite rank.

Now suppose that $\mathbb C\neq K_W$.
Then there exists an element $f(z)\in K_W$ of positive degree $r$.
Since $(W + f(z)W)/W$ is finite dimensional, we may choose $\mathbb C$-subspaces $W',E\subseteq W$ with $W'\cap E = \{0\}$ and $W'+E = W$ such that $f(z)W'\subseteq W$.
Now let $\wt E,\wt W\subseteq \mathbb L$ be the $\mathbb C(f(z))$-linear subspace of $\mathbb L$ spanned by $E$ and $W$, respectively.
Clearly $\wt E$ is finite dimensional as a $\mathbb C(f(z))$-vector space.
Also since $W\in\Gr$, we may choose a basis $\{v_j(z): j\geq 0,\ j\notin J\}\subseteq W$ with $\deg(v_j)=j$ for some finite subset $J\subseteq\mathbb Z_+$.
Let $m$ be the largest integer in $J$.
By comparing degrees, we see that $v_k(z)\in\vspan_\mathbb C\{f(z)v_j(z): 0\leq j\leq k-r,\ j\notin J\}$ modulo $\wt E$ for all $k\geq r+m+1$.
Hence the quotient space $\wt W/\wt E$ is spanned over $\mathbb C(f(z))$ by $\{v_j(z): 0\leq j\leq r+m+1,\ j\notin J\}$, and in particular $\wt W/\wt E$ is finite dimensional as a $\mathbb C(f(z))$-vector space.
It follows that $\wt W$ is finite dimensional as a $\mathbb C(f(z))$-vector space.
Since $\mathbb C(f(z))$ is a subfield of $\mathbb L$, a basis for $\wt W$ as a $\mathbb C(f(z))$-vector space will also span the $K_W$-subspace of $\mathbb L$ spanned by $W$.
Hence $W$ has finite rank.
\end{proof}

We can also show that our notion of rank is compatible with the action of fractional differential operators on $\Gr_+(0)$.
\begin{lem}\label{same rank lemma}
Let $Q(x,\partial_x) = 1 + \sum_{k=1}^\infty a_k(x)\partial_x^{-k}$ be a pseudo-differential operator and $L(x,\partial_x)$ be a fractional differential operator.
Then $W = \mathbb L_+\cdot LQ$ and $\wt W = \mathbb L_+\cdot Q$ have the same rank.
\end{lem}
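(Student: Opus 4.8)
The plan is to prove the lemma by showing that $W$ and $\wt W$ determine the same \emph{commensurability class} in $\Gr$ and that both the spectral field and the rank depend only on this class. Throughout I write $W=(\mathbb L_+\cdot L)\cdot Q$ and $\wt W=\mathbb L_+\cdot Q$. Since $Q$ is monic of order $0$, right multiplication $R_Q$ is a degree-preserving bijection of $\mathbb L$, so it suffices to compare $\mathbb L_+\cdot L$ with $\mathbb L_+$ and then apply $R_Q$. First I would note that $L$ and $L^{-1}$ are both fractional differential operators, being nonzero elements of the skew field of fractions. By Lemma \ref{char-fdo}, $\dim(\mathbb L_++\mathbb L_+\cdot L)/\mathbb L_+<\infty$ and $\dim(\mathbb L_++\mathbb L_+\cdot L^{-1})/\mathbb L_+<\infty$; right-multiplying the second relation by the bijection $R_L$ gives $\dim(\mathbb L_++\mathbb L_+\cdot L)/(\mathbb L_+\cdot L)<\infty$ as well. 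Hence $\mathbb L_+$ and $\mathbb L_+\cdot L$ are commensurable, and applying $R_Q$ shows that $V:=W\cap\wt W$ has finite codimension in both $W$ and $\wt W$.

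Next I would show that commensurable planes have the same spectral field. Given $f(z)\in K_W$, choose a finite-dimensional $E$ with $W+f(z)W=W\oplus E$, and write $\wt W=V\oplus E_1$ and $W=V\oplus E_2$ with $E_1,E_2$ finite-dimensional. Using $f(z)V\subseteq f(z)W\subseteq W+E\subseteq V+E_2+E$ one gets $\wt W+f(z)\wt W\subseteq \wt W+(E_2+E+f(z)E_1)$, so $f(z)\in K_{\wt W}$; by symmetry $K_W=K_{\wt W}=:K$. If $K=\bbc$ then the preceding lemma shows both $W$ and $\wt W$ have infinite rank and the statement holds, so I assume $K\neq\bbc$ and fix $f(z)\in K$ with $\deg f=r>0$.

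It then remains to prove $\dim_K\langle W\rangle_K=\dim_K\langle\wt W\rangle_K$, where $\langle\,\cdot\,\rangle_K$ is the $K$-span in $\mathbb L$. The mechanism is the $\bbc[f]$-module structure. Because $\deg f>0$, the module $\widehat W:=\bbc[f]\cdot W$ has $\min\deg\widehat W=\min\deg W$, so it is bounded below in degree, and a degree induction (reducing a high-degree basis vector $u_j\in W$ by $f(z)u_{j-r}$, using $f(z)W\subseteq W+E$) shows $\widehat W$ is finitely generated, hence free, over the PID $\bbc[f]$; the same holds for $\widehat V:=\bbc[f]\cdot V$. Since $W=V\oplus E_2$, the quotient $\widehat W/\widehat V$ is finitely generated over $\bbc[f]$, and the bounded-below reduction sends each generator, modulo $\widehat V$, into the finite-dimensional space $\widehat W\cap\{\deg<M_0\}$ for a threshold $M_0$; thus $\widehat W/\widehat V$ is $\bbc[f]$-torsion and $\mathrm{rank}_{\bbc[f]}\widehat W=\mathrm{rank}_{\bbc[f]}\widehat V$, and likewise for $\wt W$. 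Hence $\dim_{\bbc(f)}\bbc(f)W=\dim_{\bbc(f)}\bbc(f)\wt W$.

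The delicate point — and the step I expect to be the main obstacle — is passing from this $\bbc(f)$-rank equality to equality of the true ranks $\dim_K\langle W\rangle_K$. This is immediate once one knows that $\bbc(f)\cdot W$ is already $K$-stable, for then $\langle W\rangle_K=\bbc(f)W$ and $\dim_K\langle W\rangle_K=\dim_{\bbc(f)}(\bbc(f)W)/[K:\bbc(f)]$, with $[K:\bbc(f)]$ the same on both sides since $K$ is the same. When $A_W\neq\bbc$ one can take $f(z)\in A_W$: then $\bbc(f)W$ is an $A_W$-module that is finite-dimensional over $\bbc(f)$, so multiplication by any nonzero $b\in A_W$ is an injective, hence invertible, $\bbc(f)$-linear endomorphism of it, giving stability under $K=\mathrm{Frac}(A_W)$ via Proposition \ref{AW vs KW}. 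Establishing the analogous $K$-stability in the exotic finite-rank case $A_W=\bbc$, $K\neq\bbc$ (as in Example \ref{trivial AW example}) is where the most care is needed, and I would handle it by applying the same degree bookkeeping to the finitely many generators distinguishing $W$ from $V$.
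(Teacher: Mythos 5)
Your strategy has a genuine gap at the fourth step, and it is not a repairable technicality: the claim that $\widehat W/\widehat V$ is $\bbc[f]$-torsion --- equivalently, that two commensurable planes with the same spectral field have the same rank --- is false. Take $Q=1$ and $L=(\partial_x+x)^{-1}=e^{-x^2/2}\,\partial_x^{-1}\,e^{x^2/2}$, a fractional differential operator (of order $-1$). Since the right action satisfies $v\cdot(\partial_x+x)=zv+v'$, one finds
\[
\wt W=\mathbb L_+,
\qquad
W=\mathbb L_+\cdot LQ=\{v\in\mathbb L:\,zv+v'\in\mathbb L_+\}=\mathbb L_+\oplus\bbc w,
\qquad
w=\sum_{n\ge 0}(2n-1)!!\,z^{-2n-1}=z^{-1}+z^{-3}+3z^{-5}+\cdots,
\]
because $zw+w'=1$ and $\mathbb L_+\cdot(\partial_x+x)$ has codimension one in $\mathbb L_+$. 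Your steps 1--3 do hold here: $V=W\cap\wt W=\mathbb L_+$ has finite codimension in both planes, and $K_W=K_{\wt W}=\bbc(z)$ (writing $\pi_-$ for the projection onto $\mathbb L_-$, any $g\in K_W$ has all tails $\pi_-(gz^n)$, $n\ge 0$, inside $\bbc w$ plus a fixed finite-dimensional space, so the Hankel matrix of the coefficients of $\pi_-(g)$ has finite rank and $g$ is rational by Kronecker's theorem). But now take $f=z$: then $\widehat W=\mathbb L_+ +\bbc[z]w$, $\widehat V=\mathbb L_+$, and $\widehat W/\widehat V\cong\bbc[z]w$ is a \emph{free} $\bbc[z]$-module of rank one, not a torsion module: $p(z)w\in\mathbb L_+$ would force $w$ to be rational, which it is not, since its coefficients $(2n-1)!!$ grow super-exponentially while Laurent coefficients of rational functions grow at most exponentially. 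Hence $\dim_{\bbc(z)}\bbc(z)W=2\neq 1=\dim_{\bbc(z)}\bbc(z)\wt W$, contradicting the conclusion of your step 4. The precise non sequitur is the appeal to ``the finite-dimensional space $\widehat W\cap\{\deg<M_0\}$'': finite-dimensionality of the low-degree part holds for points of $\Gr$ but is destroyed by passing to $\bbc[f]$-spans --- here $\widehat W\cap\{\deg<0\}$ contains all tails $\pi_-(z^nw)$ and is infinite dimensional (again by Kronecker, since $\sum_n(2n-1)!!\,t^n$ is not rational) --- and in any case generators of low degree need not be torsion.

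The failure is structural. Once you replace the multiplicative relation $W=\wt W\cdot(Q^{-1}LQ)$ by mere commensurability, the rank is simply not determined: an extra generator such as $w$, transcendental over $K$ inside $\mathbb L$, survives every $\bbc[f]$-reduction. That relation is exactly what the paper's own proof retains: after showing $K_W=K_{\wt W}$ (your steps 1--2, obtained there by conjugating fractional operators instead of by commensurability), it manipulates the conjugated fields ${\mathscr{K}}_W$, ${\mathscr{K}}_{\wt W}$ together with right multiplication by $Q^{-1}LQ$ to carry a $K_{\wt W}$-basis of $K_{\wt W}\wt W$ to a $K_W$-spanning set of $K_WW$; no commensurability argument appears there. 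Note, moreover, that the example above satisfies the hypotheses of the Lemma exactly as stated, so the statement itself needs an additional assumption excluding it (for instance $\ord(L)=0$, which here is equivalent to $W\in\Gr_+(0)$); since your argument never invokes any such assumption, it cannot be correct as it stands. Finally, as you yourself note, step 5 (the $K$-stability of $\bbc(f)W$ when $A_W=\bbc$) is left open, so even in cases where the torsion argument applies the proof would remain incomplete.
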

\begin{proof}
First note that since $L(x,\partial_x)$ is fractional,
\begin{align*}
K_W
  & = \{f(z)\in\mathbb C((z^{-1})): LQf(\partial_x)Q^{-1}L^{-1}\ \text{is a fractional diff. oper.}\}\\
  & = \{f(z)\in\mathbb C((z^{-1})): Qf(\partial_x)Q^{-1}\ \text{is a fractional diff. oper.}\} = K_{\wt W}.
\end{align*}
Hence $\mathscr K_W = Q^{-1}L^{-1}\mathscr K_{\wt W}LQ$.
If $\mathcal B = \{v_1(z),\dots,v_r(z)\}\subseteq \wt W$ is a $K_{\wt W}$-basis for $K_{\wt W}\wt W$,
then the $K_W$-span of $W$ is
\begin{align*}
K_WW
  & = W\cdot Q^{-1}\mathscr K_WQ = W\cdot Q^{-1}L^{-1}\mathscr K_{\wt W} LQ\\
  & = \wt WQ^{-1}\mathscr K_{\wt W}LQ = (K_{\wt W}\wt W)\cdot Q^{-1}LQ = (K_{\wt W}\mathcal B)\cdot Q^{-1}LQ\\
  & = \mathcal B\cdot Q^{-1}\mathscr K_{\wt W}LQ = (\mathcal B\cdot Q^{-1}LQ)Q^{-1}L^{-1}\mathscr K_{\wt W}LQ\\
  & = (\mathcal B\cdot Q^{-1}LQ)\cdot Q^{-1}\mathscr K_WQ = K_W(\mathcal B\cdot Q^{-1}LQ).
\end{align*}
Hence $K_W$ is spanned by $\mathcal B\cdot Q^{-1}LQ$, so that the rank of $W$ is less than or equal to the rank of $\wt W$.
Now applying the same argument with $L$ replaced by $L^{-1}$, we get the reverse inequality.  Hence both ranks are the same.
\end{proof}

With our notion of rank for fractional differential operators firmly in place, we are able to state Krichever correspondence in the setting of fractional differential operators.

\begin{thm}
\label{main-k-thm}
Consider a hextuples $(X,p,\mathcal L, z^{-1},\varphi,\chi)$ where $X$ is an algebraic curve, $p\in X$ is a smooth point, $\mathcal L$ is a line bundle over $X$ with trivial cohomology, $z^{-1}$ is a local coordinate of $X$ in an analytic neighborhood $U$ of $p$, $\varphi$ is a local trivialization of $\mathcal L$ over $U$, and $\chi$ is a rational section of the dual of the semi-infinite jet bundle over $X\diff\{p\}$.
Then
$$W = \{\varphi(\chi(j^\infty(s))): s\in \Gamma(Z_p,\mathcal L)\}$$
defines a rank $1$ element of $\Gr_+(0)$.
Furthermore every rank $1$ element of $\Gr_+(0)$ arises in this fashion.
\end{thm}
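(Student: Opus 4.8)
The plan is to prove both implications by reducing to the classical Krichever correspondence (Theorem~\ref{ktheorem}) through the dictionary between rational sections of the dual semi-infinite jet bundle and fractional differential operators established just before Definition~\ref{fract-sect}, together with the rank-invariance Lemma~\ref{same rank lemma}. The organizing principle is that a general rank~$1$ plane differs from a \emph{classical} Krichever plane by right multiplication by a single pseudodifferential operator, which is fractional exactly when the corresponding jet section is rational.

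For the forward direction, the classical part $(X,p,\mathcal L,z^{-1},\varphi)$ is a Krichever quintuple, so by Theorem~\ref{ktheorem} it produces $W_0=\varphi(\Gamma(X\diff\{p\},\mathcal L))=\mathbb L_+\cdot Q_0$ for a monic order-$0$ pseudodifferential operator $Q_0$, with nontrivial $A_{W_0}\supseteq\Gamma(X\diff\{p\},\mathcal O_X)$ and rank~$1$ in the sense of Definition~\ref{rank of point} (its $K_{W_0}=\operatorname{Frac}(A_{W_0})$-span is one dimensional, since $\mathcal L$ is a line bundle). Over the formal annulus $Z_p$ the rational section $\chi$ is a \emph{full} section of the dual of $\mathcal J^{\infty,0}(\mathcal L)$, i.e.\ an infinite-order operator $D$ with $\chi\circ j^\infty={}\cdot D$; the hypothesis that $\chi$ extends to a section of the dual of a finite-codimension submodule over $X\diff\{p\}$ is, by the characterization preceding Definition~\ref{fract-sect}, precisely the statement that $L:=Q_0DQ_0^{-1}$ is a fractional differential operator. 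Thus the constructed plane is $W=W_0\cdot D=\mathbb L_+\cdot LQ_0$, so Lemma~\ref{same rank lemma} gives $\operatorname{rank}(W)=\operatorname{rank}(W_0)=1$; one checks directly that $W\in\Gr$, and that it lies in the big cell $\Gr_+(0)$ exactly when $L$ has order $0$, which is the nondegeneracy built into $\chi$.

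For the converse, start with a rank~$1$ plane $W\in\Gr_+(0)$, $W=\mathbb L_+\cdot U$. By the lemma characterizing finite rank, $K:=K_W\neq\bbc$, and I would first show that $K$ is an algebraic function field of transcendence degree $1$ over $\bbc$: the degree filtration argument used in that lemma shows $W$ is finite dimensional over $\bbc(f)$ for any nonconstant $f\in K_W$, so $K$ is a finite extension of $\bbc(f)$. Let $X$ be the smooth projective model of $K$, let $p$ be the place with uniformizer $z^{-1}$ (compatibly with $K\hookrightarrow\mathbb L$), and let $B=\Gamma(X\diff\{p\},\mathcal O_X)$, so $\operatorname{Frac}(B)=K$. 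Rank~$1$ means $K\cdot W=K\cdot w$ is one dimensional, and identifying this line with $K$ presents $W$ as a rank-$1$ lattice $I\subseteq K$. The crucial step is to replace $I$ by a genuine finitely generated rank-$1$ $B$-module $I_0$, commensurable with $I$ and lying on the same $K$-line, and corresponding after a twist to a line bundle $\mathcal L_0$ on $X$ with trivial cohomology; the classical correspondence then produces $W_0=\mathbb L_+\cdot Q_0\in\Gr_+(0)$, and since $B\subseteq A_{W_0}\subseteq K$ we get $K_{W_0}=\operatorname{Frac}(A_{W_0})=K$ by Proposition~\ref{AW vs KW}.

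Finally, commensurability of $I$ and $I_0$ yields $\dim(W+W_0)/W_0<\infty$; applying the bijection $\cdot\,Q_0$ of $\mathbb L$ (which carries $W_0$ to $\mathbb L_+$ and $W$ to $\mathbb L_+\cdot UQ_0^{-1}$) converts this into $\dim(\mathbb L_+ + \mathbb L_+\cdot L)/\mathbb L_+<\infty$ for $L:=UQ_0^{-1}$, so Lemma~\ref{char-fdo} shows $L$ is a fractional differential operator, necessarily of order $0$. Setting $D:=Q_0^{-1}U$ gives $W=W_0\cdot D$ with $Q_0DQ_0^{-1}=L$ fractional, whence the characterization preceding Definition~\ref{fract-sect} furnishes a rational section $\chi$ of the dual of $\mathcal J^{\infty,0}(\mathcal L_0)$ over $X\diff\{p\}$ realizing $D$; the hextuple $(X,p,\mathcal L_0,z^{-1},\varphi_0,\chi)$ then reproduces $W$ via the forward construction. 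The main obstacle is exactly the construction of the commensurable classical lattice $I_0$: one must match the behavior at $p$ so that $W_0$ falls in the big cell while remaining on the same $K$-line as $W$. This is the point where a general rank-$1$ plane, whose spectral algebra $A_W$ may be trivial (Example~\ref{trivial AW example}), is traded for a classical Krichever plane with nontrivial spectral algebra differing from it by a single fractional differential operator.
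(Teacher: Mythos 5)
Your proposal has the same skeleton as the paper's proof: the forward direction (classical Krichever plane $W_0=\mathbb L_+\cdot Q_0$ from the quintuple, $\chi=\chi_D$ over the formal annulus, rationality of $\chi$ equivalent to fractionality of $L=Q_0DQ_0^{-1}$ via the characterization theorem preceding Definition~\ref{fract-sect}, rank transported by Lemma~\ref{same rank lemma}) is essentially the paper's argument verbatim, and your closing paragraph of the converse (commensurability $\Rightarrow$ $\dim(W+W_0)/W_0<\infty$, apply $\cdot\,Q_0^{-1}$, invoke Lemma~\ref{char-fdo}, read off $\chi=\chi_D$) also matches. But the converse contains a genuine gap, and you name it yourself: the existence of a finitely generated rank-one $B$-module $I_0$ on the same $K$-line as $W$ and commensurable with it is asserted, never proved. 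This is not a deferrable detail. A priori $W$ is just a $\bbc$-subspace of the line $K\cdot w$ with no module structure over anything, and commensurability with a classical lattice is exactly the content that converts the finiteness conditions defining $K_W$ into the inequality $\dim(W+W_0)/W_0<\infty$ your last step consumes; it is also precisely the point where the rank-one hypothesis must be invoked. In the paper this step occupies the bulk of the proof of the converse.

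For the record, here is how the paper fills it. Let $A$ be the image of $\Gamma(X\diff\{p\},\mathcal O_X)$ in $\bbc((z^{-1}))$, fix $f(z)\in A\diff\bbc$ monic of degree $\ell>0$, and fix a basis $\{v_n(z)\}_{n\geq 0}$ of $W$ with $v_n$ monic of degree $n$ (possible since $W\in\Gr_+(0)$). Since $f\in K_W$, one may choose a finite-dimensional $E=\vspan_\bbc\{e_1,\dots,e_r\}$ with $\deg e_j\leq 0$ and $f(z)W\subseteq W\oplus E$; comparing leading terms in $f v_n=v_{n+\ell}+\sum_j c_{nj}v_j+\sum_k b_{nk}e_k$ shows that $W$ lies in the finitely generated $\bbc[f]$-module generated by $v_0,\dots,v_{\ell-1},e_1,\dots,e_r$. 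Only now does rank one enter: these finitely many generators lie in $K_Wv_0$, which equals $(\mathrm{Frac}\,A)\,v_0$, so a single denominator $q(f(z))$ clears them into $Av_0$, giving $q(f)W\subseteq Av_0$; combined with Lemma~\ref{Kw-second-fin} applied to $q(f)\in K_W$, this yields $\dim(W+Av_0)/Av_0<\infty$, and then any line bundle $\mathcal L$ with trivial cohomology trivialized so that $\wt W=\varphi(\Gamma(X\diff\{p\},\mathcal L))$ contains $v_0$ satisfies $\dim(W+\wt W)/\wt W<\infty$. That is your commensurability claim, after which your argument and the paper's coincide. One smaller point: your assertion that $W$ lands in the big cell ``exactly when $L$ has order $0$, which is the nondegeneracy built into $\chi$'' appeals to a nondegeneracy that Definition~\ref{fract-sect} does not contain; the paper is equally silent about index-zero normalization, so this is not a gap relative to the paper, but you should not present it as encoded in the definition.
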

\begin{remk}
Note that by Theorem \ref{KW and fractional operators}, the point $W\in \Gr_+(0)$ constructed above corresponds to the maximal algebra $\mathscr K_W$ of commuting fractional differential operators, 
given by \eqref{scrKW}. This gives an algebro-geometric construction of all rank 1 maximal algebras of commuting fractional differential operators.
\end{remk}
\begin{proof}
Let us first take a hextuple $(X,p,\mathcal L, z^{-1},\varphi,\chi)$ and define $W$ as in the statement of the theorem.
Via usual Krichever correspondence, the subspace $\wt W = \{\varphi(s): s\in\Gamma(X\diff\{p\},\mathcal L)\}$ defines an element of $\Gr_+(0)$, and we may choose $Q(x,\partial_x) = 1 + \sum_{k=1}^\infty b_k(x)\partial_x^{-k}$ with $\wt W = \mathbb L_+\cdot Q(x,\partial_x)$.
Moreover, $\wt W$ is torsion-free of rank $1$ over $A_W = \Gamma(Z_p,\mathcal L)$, so $\wt W$ is a rank $1$ point of $\Gr_+(0)$.
In particular the $K_{\wt W}$-span $K_{\wt W}\wt W$ of $\wt W$ in $\mathbb L$ is of the form $K_{\wt W}v(z)$ for some $v(z)\in \wt W$.
Furthermore, there exists a pseudodifferential operator $D(x,\partial_x)$ with $\chi = \chi_D$, so that $W= \wt W\cdot D$.
Since $\chi$ is a rational section, the operator $L(x,\partial_x) = Q(x,\partial_x)D(x,\partial_x)Q(x,\partial_x)^{-1}$ is a fractional differential operator.
By Lemma \ref{same rank lemma}, $W$ is rank $1$.

Conversely, suppose that $W\in\Gr_+(0)$ is a rank $1$.
Then in particular $K_W$ is a field extension of $\mathbb C$ of transcendence degree $1$.
Choose a smooth curve $X$ with fraction field $K(X)\cong K_W$ and fix a point $p\in X$.
Choose a uniformizer $z^{-1}$ in $\mathcal O_{X,p}$ so that the image of $K(X)$ in the formal annulus $(\wh{\mathcal O_{X,p}})_{\wh m_p} = \mathbb C((z^{-1}))$ is $K(W)$.
Let $A$ be the image of $\Gamma(X\diff \{p\},\mathcal O_X)$ in $(\wh{ \mathcal O_{X,p}})_{\wh m_p}$ and choose $f(z)\in\mathbb A\diff\mathbb C$ monic of degree $\ell>0$.
Choose a basis $\{v_n(z)\}_{n=0}^\infty\subseteq W$ such that $v_n(z)$ is monic of degree $n$ for all $n$.

Since $f(z)\in K_W$, we know that $\dim((f(z)W+W)/W)<\infty$ and therefore we can choose a finite dimensional subspace $E = \vspan_{\bbc}\{e_1(z),\dots,e_r(z)\}\subseteq \mathbb L$ such that $\deg e_j(z)\leq 0$ for all $j$ and $f(z)W\subseteq W \oplus E$.
By comparing degrees, we see that for all $n$
$$f(z)v_n(z) = v_{n+l}(z) + \sum_{j=1}^{n+\ell-1}c_{nj}v_j(z) + \sum_{k=1}^r b_{nk}e_k(z).$$
Consequently $W$ is contained in the $\mathbb C[f(z)]$-span of $\{v_0(z),\dots,v_{\ell-1}(z)\}\cup\{e_1(z),\dots,e_r(z)\}$.
Now since $W$ is rank $1$, $v_0(z),\dots, v_{\ell-1}(z),e_1(z),\dots, e_k(z)\in K_Wv_0(z) = K(A)v_0(z)$ for $K(A)$ the fraction field of $A$.
Therefore we can choose a polynomial $q(z)\in\mathbb C[z]$ with $q(f(z))v_j(z)/v_0(z)\in A$ and $q(f(z))e_k(z)\in A$ for all $0\leq j\leq \ell-1$ and $0\leq k\leq r$.
This implies that $q(f(z))W\subseteq Av_0(z)$ and therefore $(W+Av_0(z))/Av_0(z)$ is finite-dimensional.

Now let $\mathcal L$ be a line bundle on $X$ with trivial cohomology and choose a local trivialization $\phi$ of $\mathcal L$ near $p$ so that $\wt W := \varphi(\Gamma(Z_p,\mathcal L))$ contains $v_0(z)$.
Then $(\wt W+Av_0(z))/Av_0(z)$ is finite dimensional, so $(\wt W+W)/\wt W$ is finite-dimensional.
Choose pseudodifferential operators $D(x,\partial_x)$ and $Q(x,\partial_x)$ satisfying $\wt W=\mathbb L_+\cdot Q$ and $\wt W\cdot D(x,\partial_x) = W$, with $Q(x,\partial_x)$ monic of order $0$.
Then $(WQ^{-1} + \mathbb L_+)/\mathbb L_+$ is finite-dimensional.
The operator $L(x,\partial_x)) := Q(x,\partial_x)D(x,\partial_x)Q(x,\partial_x)^{-1}$ satisfies $WQ^{-1} = \mathbb L_+\cdot L$, so
$(\mathbb L_+\cdot L + \mathbb L_+)/\mathbb L_+$ is finite dimensional.  Hence $L(x,\partial_x)$ is a fractional differential operator and $\chi=\chi_D$ defines a rational section of the semi-infinte jet bundle on $\mathcal L$.
Thus $W$ is the rank $1$ element of $\Gr_+(0)$ associated with the hextuple $(X,p,\mathcal L,z^{-1},\varphi,\chi)$.
\end{proof}

As a consequence of our characterization, we prove that rank $1$ commutative algebras of fractional differential operators are, up to conjugation, 
fractions of commuting differential operators.
\begin{cor}\label{Cor-classification}
If $\mathscr A$ is a subalgebra of $\mathscr K_W$ for $W$ a rank $1$ point in $\Gr_+(0)$, then there exists a commutative algebra
of differential operators $\mathscr A_0$ and a fractional differential operator $D(x,\partial_x)$ such that
$$D(x,\partial_x)\mathscr A D(x,\partial_x)^{-1} \subseteq \{L_1(x,\partial_x)^{-1}L_2(x,\partial_x): L_j(x,\partial_x)\in\mathscr A_0,\ j=1,2\}.$$
\end{cor}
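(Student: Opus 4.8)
The plan is to reduce the statement to the geometric structure already produced inside the proof of Theorem~\ref{main-k-thm}. Since $\mathscr A \subseteq \mathscr K_W$, it suffices to exhibit a fractional differential operator $D(x,\partial_x)$ and a commutative algebra of differential operators $\mathscr A_0$ with $D \mathscr K_W D^{-1} = \{L_1(x,\partial_x)^{-1}L_2(x,\partial_x): L_j \in \mathscr A_0\}$; the desired inclusion for $\mathscr A$ then follows by restriction. The two ingredients I would assemble are: (i) $\mathscr K_W$ is conjugate, by a fractional differential operator, to the field $\mathscr K_{\wt W}$ attached to a \emph{geometric} companion point $\wt W\in\Gr_+(0)$ whose spectral algebra $A_{\wt W}$ is nontrivial; and (ii) for such a $\wt W$ the field $\mathscr K_{\wt W}$ is exactly the field of fractions, inside the skew field of fractional differential operators, of an honest commutative algebra of differential operators $\mathcal A_{\wt W}$.

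To produce $\wt W$, I would invoke the converse construction in the proof of Theorem~\ref{main-k-thm}: because $W$ has rank $1$, there is a Krichever quintuple $(X,\mathcal L,p,z^{-1},\varphi)$ giving a point $\wt W = \mathbb L_+\cdot Q$ of $\Gr_+(0)$, with $Q(x,\partial_x)$ monic of order $0$, together with a pseudodifferential operator $D(x,\partial_x)$ such that $W = \wt W\cdot D$ and $L(x,\partial_x):= Q D Q^{-1}$ is a \emph{fractional} differential operator. Here $\wt W$ comes from a genuine affine curve, so $A=\Gamma(X\diff\{p\},\mathcal O_X)$ embeds in $A_{\wt W}$ and has transcendence degree $1$; in particular $\bbc\subsetneq A_{\wt W}$. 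Writing $U_W$ for the monic order-$0$ operator with $W=\mathbb L_+\cdot U_W$, the equality $\mathbb L_+\cdot U_W = \mathbb L_+\cdot QD$ together with Lemma~\ref{descr-do} forces $U_W = c(x)\,QD$ for some unit $c(x)\in\bbc[[x]]^\times$, so that $D_0 := U_W Q^{-1} = c(x) L$ is a fractional differential operator. Combining this with the equality of spectral fields $K_W = K_{\wt W}$ established in (the proof of) Lemma~\ref{same rank lemma} (note $QD = LQ$, so this is exactly the situation treated there), I would then compute $\mathscr K_W = U_W K_W U_W^{-1} = D_0\,(Q K_{\wt W} Q^{-1})\,D_0^{-1} = D_0\,\mathscr K_{\wt W}\,D_0^{-1}$.

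Finally I would identify $\mathscr K_{\wt W}$ with the fractions of $\mathcal A_{\wt W}:=\{Q g(\partial_x) Q^{-1}: g(z)\in A_{\wt W}\}$, a commutative algebra of differential operators. Since $\bbc\subsetneq A_{\wt W}$, Proposition~\ref{AW vs KW} gives that $K_{\wt W}$ is the fraction field of $A_{\wt W}$; conjugation by $Q$ is an automorphism of the skew field of fractional differential operators carrying the commutative domain $A_{\wt W}$ onto $\mathcal A_{\wt W}$, hence carrying $K_{\wt W}$ onto $\mathscr K_{\wt W}$ and identifying $\mathscr K_{\wt W}$ with the field of fractions of $\mathcal A_{\wt W}$. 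As $\mathcal A_{\wt W}$ is commutative, every such fraction has the form $L_1^{-1}L_2$ with $L_1,L_2\in\mathcal A_{\wt W}$. Taking $\mathscr A_0=\mathcal A_{\wt W}$ and $D=D_0^{-1}$ then yields $D\mathscr A D^{-1}\subseteq D\mathscr K_W D^{-1}=\mathscr K_{\wt W}=\{L_1^{-1}L_2: L_j\in\mathscr A_0\}$, as desired. The main obstacle is the bookkeeping of the middle paragraph: verifying that the conjugating operator $D_0$ is genuinely fractional (not merely pseudodifferential) and that $K_W=K_{\wt W}$, both of which rest on the fractionality of $L=QDQ^{-1}$ coming from Theorem~\ref{main-k-thm} and on Lemma~\ref{same rank lemma}; once these are in hand, the identification of $\mathscr K_{\wt W}$ with $\mathrm{Frac}(\mathcal A_{\wt W})$ is purely formal.
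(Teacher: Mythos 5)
Your proposal is correct and is essentially the paper's own argument: the paper's proof of Corollary \ref{Cor-classification} is literally ``This follows from the proof of the previous theorem,'' and your write-up unpacks exactly the intended ingredients --- the converse construction in the proof of Theorem \ref{main-k-thm} producing $\wt W = \mathbb L_+\cdot Q$ with $\bbc \subsetneq A_{\wt W}$ and $L=QDQ^{-1}$ fractional, the equality $K_W = K_{\wt W}$ from (the proof of) Lemma \ref{same rank lemma}, and Proposition \ref{AW vs KW} identifying $K_{\wt W}$ with the fraction field of $A_{\wt W}$, whence $\mathscr K_{\wt W}=\{L_1^{-1}L_2: L_j\in\mathcal A_{\wt W}\}$. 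The only point worth smoothing is that your step $U_W = c(x)\,QD$ presupposes that $QD$ is invertible in $\mathcal P$, which is not guaranteed for an arbitrary $D$ with $\wt W\cdot D=W$; this is harmless, since one may simply take $D := Q^{-1}U_W$ (so $c\equiv 1$ and $D_0=L$), which still satisfies $\wt W\cdot D = W$ and makes $D_0=U_WQ^{-1}$ fractional directly by the Lemma \ref{char-fdo} argument used in the theorem's proof, because $\mathbb L_+\cdot D_0 = WQ^{-1}$.
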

\begin{proof}
Let $W$ be a rank $1$ point in $\Gr_+(0)$ corresponding to $(X,p,\mathcal L, z^{-1},\varphi,\chi)$ as in the previous theorem, and let $Q(x,\partial_x)$ be the wave operator satisfying $W_0 = \mathbb C[z]\cdot Q(x,\partial_x)$ for $W_0$ the point in $\Gr_+(0)$ associated with $(X,p,\mathcal L,z^{-1},\varphi)$.
Note that $W_0$ also has rank $1$ and $A_{W_0}\neq\mathbb C$, so by \ref{AW vs KW} $\mathscr K_{W_0}$ is the fraction field of $\mathscr A_{W_0}$.

Express $\chi$ in a neighborhood of $p$ as $\chi = \chi_B$ for some pseudodifferential operator $B(x,\partial_x)$.
Since $\chi$ is a fractional section of the dual, we know that $D(x,\partial_x) = Q(x,\partial_x)B(x,\partial_x)Q(x,\partial_x)^{-1}$ is a fractional differential operator by Theorem  \ref{cool catz}.  Moreover,
$$W = \chi_D(W_0) = W_0\cdot D(x,\partial_x) = \mathbb C[z]\cdot Q(x,\partial_x)B(x,\partial_x) = \mathbb C[z]\cdot D(x,\partial_x)Q(x,\partial_x).$$
It follows that if $R(x,\partial_x)\in \mathscr K_W$, then $D(x,\partial_x)R(x,\partial_x)D(x,\partial_x)^{-1}\in \mathscr K_{W_0}$.
\end{proof}

\section{Examples}
In this section we illustrate the results with several explicit examples. The first example provides a bridge to the classical Krichever correspondence for nonsingular curves 
where all quantities can be written explicitly in terms of $\theta$-functions, and the fractional operators can be naturally related to generalized eigenvalue problems.
Some key features of this example are also described in an important sub-example.
We also explicitly describe how these two examples are obtained from Theorem \ref{main-k-thm}. 
We then present two examples that illustrate Theorem \ref{main-k-thm}, and in particular, the use of sections of dual semi-infinite jet bundles in it. 

\subsection{Krichever's construction for nonsingular curves}
Firstly, we recall the explicit form of Krichever's correspondence from Theorem \ref{ktheorem} in the case of non-singular curves. 
Let $X$ be a non-singular complex algebraic curve of genus $g$ and let $(X,{\infty},z,D)$ be the data in Section~\ref{2.2}. We fix a 
canonical basis $\{\alpha_j,\beta_j\}_{j=1,\dots,g}$ for $H_1(X,\bbz)$, i.e. $\alpha_j\circ\beta_k=\delta_{jk}$. 
Let $\{\omega_j\}_{j=1,\dots,g}$ be a basis of the space of holomorphic $1$-forms normalized by the condition 
\begin{equation*}
\oint_{\alpha_j}\omega_k=\delta_{jk},
\end{equation*}
and denote by $B$ the matrix of $\beta$-periods
\begin{equation*}
B_{jk} = \oint_{\beta_j}\omega_k.
\end{equation*}
Let ${\mathfrak{J}}(X)=\bbc^g/\{\bbz^g+B\bbz^g\}$ denote the Jacobian of $X$, and $A:X\rightarrow {\mathfrak{J}}(X)$ be the Abel map
\begin{equation}						\label{D.1}
A(P)=\left(\int_{P_0}^P\omega_1,\int_{P_0}^P\omega_2,\dotsc,
		\int_{P_0}^P\omega_g\right),
\end{equation}
where $P_0$ is a fixed point on the Riemann surface, and the path of 
integration is the same in all integrals. 
The {\bf Riemann theta function} is defined by
\begin{equation}\label{RTF}
\theta(Z|B)=\sum_{N\in\bbz^g}\exp\left(\pi i\langle BN,N\rangle+
2\pi i\langle N,Z\rangle\right), \text{ where }Z\in\bbc^g \text{ and }\langle N,Z\rangle=\sum_{j=1}^gN_jZ_j.
\end{equation}
Let $\eta^{(n)}$ denote the normalized Abel differential of 
second kind with a pole of order $(n+1)$ of the form 
\begin{equation*}
\eta^{(n)}=dz^n+\text{(holomorphic part)},
\end{equation*}
with vector of $\beta$-periods $(2\pi i)U_n$:
\begin{equation*}
(U_n)_j=\frac{1}{2\pi i}\oint_{\beta_j}\eta^{(n)}.
\end{equation*}
For $P$ close to ${\infty}$ we have
\begin{equation}						\label{D.3}
\int_{P_0}^P\eta^{(n)}=z^n+c_{n0}+\sum_{j=1}^{\infty}\frac{c_{nj}}{j}z^{-j}.
\end{equation}
If $\cK=(\cK_1,\dotsc,\cK_g)$ is the vector of Riemann constants, then the Baker-Akhiezer function corresponding to the data $(X,{\infty},z,D)$ is
\begin{equation}						\label{D.4}
\begin{split}
\psi(\vec t, P)=&\exp\left(\sum_{n=1}^{\infty}t_n\left(\int_{P_0}^P\eta^{(n)}-c_{n0}
\right)\right)\\
&\times\frac{\theta(A(P)+\sum t_nU_n-A^{(g)}(D)-\cK)}{\theta(A({\infty})+
\sum t_nU_n-A^{(g)}(D)-\cK)}
\frac{\theta(A({\infty})-A^{(g)}(D)-\cK)}{\theta(A(P)-A^{(g)}(D)-\cK)},
\end{split}
\end{equation}
see \cite{Krichever1}. In a neighborhood of $\infty$, we can replace $P$ by $z$, and the stationary Baker-Akhiezer function $\psi(x, z)$ can be obtained by setting $t_1=x$ and $t_j=0$ for $j>1$. The $\tau$-function for the corresponding point $W\in\Gr$ is given by the formula
\begin{equation}						\label{D.12}
\tau(t)=\exp\left(-\frac{1}{2}\sum_{j,k\geq1}c_{jk}t_jt_k+\sum_{j\geq1}
\lambda_jt_j\right)\theta\left(A(\infty)+\sum_{j\geq1}t_jU_j-A^{(g)}(D)-\cK\right),
\end{equation}
for appropriate normalizing constants $\lambda_j$, see \cite[Section 9]{Segal-Wilson}.

\subsection{Fractional operators associated with hyperelliptic curves}

Consider the affine curve
\begin{equation}\label{HEC}
w^2=\prod_{j=1}^{2g+2}(z-z_j),
\end{equation}
where $z_1,\dots,z_{2g+2}$ are distinct complex numbers. Let $X$ denote the completion obtained by adding two points $\{\infty,-\infty\}$. Using $z^{-1}$ as a local parameter at these points, we have 
\begin{align*}
& w(P)=z^{g+1}\sqrt{\prod_{j=1}^{2g+2}(1-z_j/z)} && \text{in a neighborhood of $\infty$} \\
& w(P)=-z^{g+1}\sqrt{\prod_{j=1}^{2g+2}(1-z_j/z)} && \text{in a neighborhood of  $-\infty$,} 
\end{align*}
where the square root is fixed so that $\sqrt{1}=1$.  The basis of holomorphic 1-forms can be computed explicitly by normalizing the basis $\{z^{k-1}dz/w\}_{k=1,\dots,g}$.

Applying Krichever's construction in the previous subsection with $P_0=-\infty$, we can construct monic differential operators $L_1=L_1(x,\partial_x)$ and $L_2=L_2(x,\partial_x)$ of orders $1$ and $2$, respectively, satisfying the generalized eigenvalue problem
\begin{equation}						\label{K.3}
L_2 \psi(x, P)=z(P)L_1 \psi(x, P).
\end{equation}
Indeed,  if we set 
$$L_1=\partial_x+b_0, \text{ where }b_0=-\partial_x \log (\psi(x, -\infty))$$ 
then  $L_1 \psi(x, P)$ cancels the pole of $z$ at $-\infty$ on the right-hand side of equation \eqref{K.3}. In a neighborhood of $\infty$, the right-hand side will have the expansion 
$$\left(z^2+\psi_1(x)z+\psi_0(x)+O(z^{-1})\right)e^{xz}.$$
Therefore, we can construct a monic differential operator $L_2=\partial_x^2+a_1\partial_x+a_0$ of order $2$ such that 
$L_2 \psi(x, z)-zL_1 \psi(x, z) =O(z^{-1})e^{xz}$ and the uniqueness of the Baker-Akhiezer function shows that this term must vanish, i.e. \eqref{K.3} holds.

If we use the KP flows, we can write explicit formulas for the coefficients of the operators $L_j$ in terms of the $\tau$-function in \eqref{D.12}. Indeed, expanding \eqref{K.3} in a neighborhood of $\infty$ and comparing the coefficients of $z$, after we canceling the exponent, we see that $a_1=b_0$ and therefore
\begin{equation*}
a_1=b_0=-\partial_x \log (\psi(\vec t, -\infty)).
\end{equation*}

The coefficients of $z^0$ now show that $a_0=-\partial_xu_1$. On the other hand, $u_1=-\partial_x \log \tau$ and therefore
\begin{equation}\label{e.3}
a_0=\partial_x^2 \log \tau.
\end{equation}
If we compare the coefficients of $z^{-1}$ in \eqref{K.3}  we see that
$$a_0u_1 + a_1u_1' + u_2' + u_1''=0,$$
which combined with Sato's formulas
$$u_1=-\partial_x \log \tau, \qquad u_2=\frac{(\partial_x^2-\partial_2)\tau}{2\tau},$$
yields a formula for $a_1$ in terms of the $\tau$-function:
\begin{equation}\label{e.4}
a_1=-\frac{(\partial_x^3+\partial_x\partial_2)\log\tau}{2\partial_x^2 \log \tau}.
\end{equation}

In a neighborhood of $\infty$, equation~\eqref{K.3} can be rewritten as
\begin{equation}						\label{D.21}
L\psi(x, z)=z\psi(x, z), \qquad \text{where }L=L_1^{-1}L_2=\partial_x+(\partial_x+a_1)^{-1}a_0,
\end{equation}
and the fractional operator $L$ belongs to the rank 1 point $W\in\Gr$ corresponding to the Krichever's data $(X,{\infty},z,D)$. Note that the operators $L_1$ and $L_2$ in \eqref{D.21} do not commute, but we can represent $L$ also
as $L=\tilde{L}_1^{-1}\tilde{L}_2,$
where $\tilde{L}_1$ and $\tilde{L}_2$ are commuting operators from the algebra \eqref{Schur-to-CAode} as stated in Corollary~\ref{Cor-classification}. This is equivalent to showing that $z$ belongs to the fraction field of $A_W$, which was proved in Proposition~\ref{AW vs KW}. For the hyperelliptic curve \eqref{HEC}, we can easily construct explicitly two functions from $A_W$ whose ratio is equal to $z$. Indeed, since $z$ has a simple pole at $-\infty$, we can choose constants $c_0,c_1,\dots,c_{g}$ such that the function
\begin{equation}						\label{D.22}
\tilde{w}=\frac{1}{2}\left(w+z^{g+1}+\sum_{j=0}^{g}c_jz^j\right)
\end{equation}
is holomorphic on $X\setminus\{\infty\}$ and vanishes at $-\infty$. Then $z=(z\tilde{w})/\tilde{w}$, where $\tilde{w},z\tilde{w}\in A_W$.
Note that 
$$\tilde{w}(P)=z^{g+1}+O\left(z^{g}\right) \qquad  \text{in a neighborhood of $\infty$},$$
and  therefore, we can construct monic differential operators $\tilde{L}_1$ and  $\tilde{L}_2$ of orders $g+1$ and $g+2$, respectively, such that
\begin{equation*}
\tilde{L}_1\psi(x, z)=\tilde{w}\,\psi(x, z), \quad \text{ and }\quad \tilde{L}_2\psi(x, z)=z\tilde{w}\,\psi(x, z).
\end{equation*}
Thus, for the operator $L$ in \eqref{D.21} we have 
$L=L_1^{-1}L_2=\tilde{L}_1^{-1}\tilde{L}_2,$
where $\tilde{L}_1$ and $\tilde{L}_2$ are commuting operators of orders $g+1$ and $g+2$, respectively. 
\begin{remk}
When $g=1$, \eqref{HEC} is replaced by the elliptic curve 
\begin{equation}\label{EC}
w^2=p_4(z)=(z-z_1)(z-z_2)(z-z_3)(z-z_4),
\end{equation}
and the formulas above can be expressed in terms of well-known elliptic functions. Indeed, we can take
$$\omega_1=\frac{1}{a}\frac{dz}{\sqrt{p_4(z)}}, \text{ where }a=2\int_{z_1}^{z_2}\frac{dz}{\sqrt{p_4(z)}} \quad
\text{ and }\quad B=\frac{\int_{z_2}^{z_3}\frac{dz}{\sqrt{p_4(z)}}}{\int_{z_1}^{z_2}\frac{dz}{\sqrt{p_4(z)}}}.$$
The Riemann theta function \eqref{RTF} reduces to the  {\bf Jacobi theta function} 
\begin{equation*} 
\theta(z;\tau)=\sum_{n=-\infty}^{\infty}e^{\pi in^2\tau +2\pi inz},\qquad \text{ where }\tau=B.
\end{equation*}
Using the formula
$$\wp(z,\tau)=-\partial_z^2 \log\theta\left(z+\frac{\tau}{2}+\frac{1}{2};\tau\right)+\mathrm{constant},$$
we can express the coefficients of the operators $L_1$ and $L_2$ given in \eqref{e.3}-\eqref{e.4} in terms of the {\bf Weierstrass $\wp$-function}.
This gives that in this sub-example, the coefficients $c_0$ and $c_1$ in \eqref{D.22} are explicitly given by
$$c_1=-\frac{1}{2}\sum_{j=1}^{4}z_j \qquad\text{ and }\qquad c_0 =\frac{1}{4}\sum_{1\leq i<j\leq 4}z_iz_j-\frac{1}{8}\sum_{i=1}^{4}z_i^2.$$
\end{remk}

\subsection{An Example of Krichever Correspondence}
In this subsection, we will formally construct a point in $\Gr_+(0)$ from a Krichever qunituple of data, corresponding to a maximal algebra of commuting fractional differential operators. To start, we will construct a Krichever quintuple associated to a line bundle over an elliptic curve.

Consider the elliptic curve
$$X = \{[X_0:X_1:X_2]\in \mathbb P^2_{\mathbb C}: X_1^2X_2 = 4X_0^3 - g_2X_0X_2^2-g_3X_2^3\}$$
along with the distinguished point $\infty = [0:1:0]$, where here $g_2,g_3\in\mathbb C$ are chosen so that $X$ is nonsingular.
The line bundles with trivial cohomology on $X$ are determined by a choice of closed point $q\in X\diff\{\infty\}$.
Specifically, given $q = [a:b:1] \in X\diff\{\infty\}$, we have the line bundle $\mathcal O(D)$ associated to the Weyl divisor $D = [\infty]-[q]$ 
Let $U=\{X_1\neq 0\}\subseteq X$.
The sections of the associated line bundle $\mathcal O(D)$ over $U$ are
$$\Gamma(U,\mathcal O(D)) = \{f\in k(X): f \equiv 0\ \text{or}\ (\text{div}(f)+D)|_U\geq 0\}.$$
The sections of the Serre twist $\mathcal O(-1)$ over $U$ are
$$\Gamma(U,\mathcal O(-1)) = \{f(X_0/X_1,X_2/X_1)/X_1: f\ \text{homog. poly.}\}.$$
The homogeneous rational function $X_1/X_0$ has a zero of order $1$ at $\infty$.
Consequently for any neighborhood $U$ of $\infty$, the module homomorphism 
$$\Gamma(U,\mathcal O(D))\rightarrow \Gamma(U,\mathcal O(-1)),\ \ f\mapsto (X_1/X_0)\frac{f}{X_1}$$
is well-defined and localizes to an isomorphism of stalks $\varphi: \mathcal O(D)_\infty\cong \mathcal O_{X,\infty}$.

The Weierstrass $\wp$-function for this elliptic curve defines a parametrization
$$\phi: \mathbb C \rightarrow X,\ \ w\mapsto \left\lbrace\begin{array}{cc}[\wp(w):\wp'(w):1], & w\notin\Lambda\\ \infty, & w \in \Lambda\end{array}\right.$$
where $\Lambda$ is the lattice of singularities of $\wp(z)$, and restricts to a holomorphism $\mathbb C/\Lambda\rightarrow X$.
Thus $z^{-1} := w$ defines a local uniformizer in a local analytic neighborhood of $\infty\in X$.
Here, we are viewing $\wp(z^{-1})$ as a Laurent series
$$\wp(z^{-1}) = z^2 + \sum_{n=1}^\infty (2n+1)G_{2n+2}z^{-2n},\ \ \text{for}\ G_n = \sum_{0\neq\lambda\in\Lambda} \lambda^{-n},$$
defined in an annular neighborhood of $\infty$ in the complex plane, which by rescaling we can assume contains the unit circle $S^1$.

Fixing the value of $q = [a:b:1]\in X$, we obtain a traditional Krichever quintuple $(X,\infty,\mathcal O(D), z^{-1},\varphi)$ whose associated point in $\Gr_+(0)$ is given by
restricting sections of $\mathcal O(D)$ over $X\diff\{\infty\}$ to the stalk $\mathcal O_{X,\infty}$ via the trivialization $\varphi$, and then precomposing with $\phi$.
Thus we find
\begin{align*}
W & = \{f(\wp(z^{-1}),\wp'(z^{-1}))/\wp(z^{-1}): f(x,y)\in\mathbb C[x,y],\ f(a,b) = 0\},
\end{align*}
which satisfies $A_W = \mathbb C[\wp(z^{-1}),\wp'(z^{-1})]$.

Choose $Q(x,\partial_x) = 1 + \sum_{n=1}^\infty \mu_n(x) \partial_x^{-n}$ with $W = \bbc[z]\cdot Q$.
As in the usual Krichever correspondence, the algebra $\mathbb C[\wp(\partial_x^{-1}),\wp'(\partial_x^{-1})]$ conjugates under $Q$ to a commutative algebra of differential operators.
In particular there are operators $L_2(x,\partial_x)$ and $L_3(x,\partial_x)$ satisfying
\begin{align*}
L_2(x,\partial_x) &= Q(x,\partial_x)\wp(\partial_x^{-1})Q(x,\partial_x)^{-1} = \partial_x^2 + u(x),\\
L_3(x,\partial_x) &= Q(x,\partial_x)\wp'(\partial_x^{-1})Q(x,\partial_x)^{-1} = -2\partial_x^3 + v(x)\partial_x + w(x).
\end{align*}

The value of $u(x)$ is given by the $\tau$-function of $W$ by $u(x) = 2\frac{d^2}{dx^2}\log\tau_W(x)$.
The $\tau$-function in turn can be expressed in terms of the theta function of the elliptic curve, as alluded to in the previous section.
Alternatively, as a more pedestrian approach we can note that the operators $L_2$ and $L_3$ must commute and satisfy the same algebraic relation as $\wp(z^{-1})$ and $\wp'(z^{-1})$, namely
$$L_3(x,\partial_x)^2 = 4L_2(x,\partial_x)^3-g_2L_2(x,\partial_x)-g_3.$$
From this, one may deduce
\begin{align}
L_2(x,\partial_x) &= \partial_x^2 - 2\wp(x-c_q),\\
L_3(x,\partial_x) &= -2\partial_x^3 + 6\wp(x-c_q)\partial_x + 3\wp'(x-c_q).
\end{align}
for some constant $c_q\in\mathbb C$ whose value depends on the choice of $q = [a:b:1]$.
Thus the algebra $A_W$ conjugates to the algebra of commuting differential operators commuting with $L_2(x,\partial_x)$ and $L_3(x,\partial_x)$.
The field associated to the point $W$ is $K_W = \mathbb C(\wp(z^{-1}),\wp'(z^{-1}))$, so the associated maximal algebra of commuting fractional differential operators is
$$\mathscr K_W =  \mathbb C(L_2(x,\partial_x),L_3(x,\partial_x)).$$

\subsection{An example of extended Krichever correspondence}

Next, we extend the example of the previous subsection to an example of Krichever correspondence for a more general algebra of fractional differential operators.

Let $W$ and $Q$ be as in the previous example.
Consider the operator
$$P(z,\partial_z) = -\frac{1}{\wp(z^{-1})}z^2\partial_z\wp(z^{-1}) = -z^2\partial_z + \frac{\wp'(z^{-1})}{\wp(z^{-1})}.$$
It is easy to check that $P(z,\partial_z)\cdot W\subseteq W$ and therefore $U = Q\wt PQ^{-1}$ must be a differential operator for $\wt P(x,\partial_x) = -x\partial_x^2 + \wp'(\partial_x^{-1})/\wp(\partial_x^{-1})$.

Comparing orders and coefficients, we determine $U=-x\partial_x^2 - \partial_x + g(x)$ for some function $g(x)$.
Furthermore, since $[P(z,\partial_z),\wp(z^{-1})] = \wp'(z^{-1})$, we know that $[L_2(x,\partial_x),U(x,\partial_x)] = L_3(x,\partial_x)$, from which one may deduce $g'(x) = 2x\wp'(x-c_q)+3\wp(x-c_q)$.
This shows us that 
$$U(x,\partial_x) = -x\partial_x^2 -\partial_x + 2x\wp(x-c_q) - \zeta(x-c_q),$$
for $\zeta(z)$ the Weierstrass zeta function of the elliptic curve.

Using the notation of the previous subsection, consider the rational section $\chi$ of the dual of the weak jet bundle $J^{\infty,0}(\mathcal L)$, defined over the formal annulus at $p$ by
$$\chi(j^{\infty}(v)) = P(z,\partial_z)\cdot v(z)$$

Alternatively, in terms of the action of pseudo-differential operators on $\mathbb L$, we can write $\chi = \chi_{\wt P}$.
The associated point in $\Gr_+(0)$ is defined by
\begin{align*}
\wt W
 & = \{\varphi\circ \chi(j^{\infty}(f)): f\in \Gamma(X\diff\{p\},\mathcal O(D))\}\\
 & = \left\lbrace P(z,\partial_z)\cdot v(z) : v(z)\in W\right\rbrace = W\cdot \wt P(x,\partial_x)
\end{align*}
where here $W = \bbc[z]\cdot Q(x,\partial_x)$ is the point in Sato's Grassmannian from the previous example.
Furthermore, we can write
$$\wt W = \bbc[z]\cdot Q(x,\partial_x)\wt P(x,\partial_x) = \bbc[z]\cdot U(x,\partial_x)Q(x,\partial_x).$$

In this case $A_W=\mathbb C$, which would be a problem for the classical theory.
However, since $U(x,\partial_x)$ is a differential operator, it follows that
$K_{\wt W}= K_W$, and in particular is nontrivial.
The associated maximal field of commuting fractional differential operators is
$$\mathscr K_{\wt W} = \mathbb C(\wt L_2(x,\partial_x), \wt L_3(x,\partial_x))$$
for the fractional operators
$$\wt L_j(x,\partial_x) = U(x,\partial_x)L_j(x,\partial_x)U(x,\partial_x)^{-1}.$$

\section{Appendix: jet bundles}
Jet bundles are vector bundles which encode the data of a function and its derivatives, thereby allowing for a formal exploration of nonlinear partial differential equations in a purely geometric setting.
A comprehensive introduction to jet bundles in the context of vector bundles on smooth real manifolds can be found in \cite{Saunders}.
For the purposes of this paper, we will focus on the theory of jet bundles from the point of view of algebraic geometry, based on the presentation in \cite{Vakil}.
In the literature, the sheaf of jet bundles is alternatively called the \vocab{sheaf of principal parts} and is defined in full generality in \cite[Section 16]{EGA4}.

Throughout this section, $X$ will denote a complex algebraic variety which is Cohen-Macaulay.

\subsection{Finite jets}

Let $p\in X$ and let $z = (z_1,\dots, z_n)$ be local coordinates for $X$ in a neighborhood $U$ of $p$.
Intuitively, the bundle $\mathcal J^m(\mathcal O_X)$ of $m$-jets of the structure sheaf $\mathcal O_X$ is a vector bundle on $X$ whose sections are generated as an $\mathcal O_X$-module by the $m$-truncated Taylor series of functions in $\mathcal O_X$.
Adopting multi-index notation, the Taylor series of a function $f: X\rightarrow \mathbb C$ at a point $p = (p_1,\dots,p_n)\in X$ truncated to degree $m$,
$$\tau_{f,m}(z,p) = \sum_{|I|\leq m} \frac{(z-p)^I}{I!}\frac{\partial^I f}{\partial z^I}|_p,$$
may be viewed as a function $\tau_{f,m}$ on $X\times X$, ie. an element of $\mathcal O_{X\times X}$.
Thus we can naturally view $\mathcal J^m(\mathcal O_X)$ as a subsheaf of $\mathcal O_{X\times X}$, whose cokernel is $\mathcal I_\Delta^{m+1}$ where here $\mathcal I_{\Delta}$ is the diagonal ideal of $X\times X$, defined by
$$\mathcal I_{\Delta}(U\times V) = \{g(x,y)\in\mathcal O_{X,X}(U\times V): g(x,x) = 0,\ \forall x\in U\cap V\}.$$
This motivates the definition of the jet bundle $\mathcal J^m(\mathcal O_X)$ as $\mathcal O_{X\times X}/\mathcal I_\Delta^{m+1}$.
More generally, we have the following definition.

\begin{defn}
Let $\mathcal E$ be a locally free $\mathcal O_X$-module of finite rank.
The \vocab{bundle of $m$-jets of $\mathcal E$} is defined to be
\begin{equation}
\mathcal J^m(\mathcal E) = q_{1*}\left(\mathcal O_{X\times X}/\mathcal I_\Delta^{m+1}\otimes q_2^*\mathcal E\right)
\end{equation}
where here $\mathcal I_\Delta$ is the ideal sheaf of the image of the diagonal embedding $\Delta: X\rightarrow X\times X$ and $q_j: X\times X\rightarrow X$ are the projection on the first and second components (respectively) for $j=1,2$.
\end{defn}
Over an affine neighborhood $U=\spec(A)\subseteq X$, we may identify $\mathcal J^m(\mathcal E)$ with the $A$-module
\begin{equation}\label{local module structure}
\Gamma(U,\mathcal J^n(\mathcal E)) = (A\otimes_{\bbc}\Gamma(U,\mathcal E))/I_\Delta^{m+1}(A\otimes_{\bbc}\Gamma(U,\mathcal E))
\end{equation}
where the action by $A$ occurs on the first entry of the tensor product and $I_\Delta = \langle\{a\otimes 1-1\otimes a: a\in A\}\rangle$.
Note in particular that $\mathcal J^0(\mathcal E)=\mathcal E$ and that $\mathcal J^1(\mathcal E)$ is related to the cotangent bundle on $X$ by $\mathcal J^1(\mathcal E)\cong (\mathcal O_X\otimes \Omega_{X/\mathbb C})\otimes \mathcal E$.

Each section $s\in \Gamma(U,\mathcal E)$ over the affine open $U$ induces a section $j^ms\in\Gamma(U,\mathcal J^m(\mathcal E))$ defined by $1\otimes q_2^*s$.
\begin{defn}
Let $s\in \Gamma(U,\mathcal E)$ be a section.
The \vocab{$m$-jet of $s$} is the section of $\mathcal J^m(\mathcal E)$ over $U$ defined by
\begin{equation}\label{j_k}
j^m(s) = 1\otimes q_2^*s\in \mathcal O_{X\times X}/\mathcal I_\Delta^{m+1}\otimes q_2^*\mathcal E.
\end{equation}
\end{defn}

For integers $m>\ell\geq 0$, there is a natural module epimorphism
\begin{equation}
\pi_{m,\ell}: \mathcal J^m(\mathcal E)\rightarrow \mathcal J^\ell(\mathcal E)
\end{equation}
induced by the surjection $\mathcal O_{X\times X}/\mathcal I_\Delta^{m+1}\rightarrow \mathcal O_{X\times X}/\mathcal I_\Delta^{\ell+1}$.
These projections correspond to truncating Taylor polynomials.
\begin{prop}
Let $\mathcal E$ be a locally free $\mathcal O_X$ module of finite rank over a scheme $X$.  Then there is a short exact sequence
$$0\rightarrow q_{1*}(\mathcal I_\Delta^m/\mathcal I_\Delta^{m+1})\otimes\mathcal E\rightarrow\mathcal J^m(\mathcal E)\rightarrow\mathcal J^{m-1}(\mathcal E)\rightarrow 0.$$
\end{prop}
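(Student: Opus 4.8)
The plan is to obtain the sequence by pushing forward along $q_1$ a short exact sequence that already lives on $X\times X$. On $X\times X$ the nested diagonal ideals give the tautological short exact sequence of $\mathcal O_{X\times X}$-modules
\[
0\to \mathcal I_\Delta^m/\mathcal I_\Delta^{m+1}\to \mathcal O_{X\times X}/\mathcal I_\Delta^{m+1}\to \mathcal O_{X\times X}/\mathcal I_\Delta^{m}\to 0.
\]
Since $\mathcal E$ is locally free of finite rank, so is $q_2^*\mathcal E$, and tensoring the above sequence with $q_2^*\mathcal E$ therefore preserves exactness. Applying $q_{1*}$ to the resulting middle and right terms recovers $\mathcal J^m(\mathcal E)$ and $\mathcal J^{m-1}(\mathcal E)$ by the definition of the jet bundle, and the induced map is precisely the projection $\pi_{m,m-1}$; so the whole construction amounts to showing that $q_{1*}$ takes the tensored sequence to a short exact sequence.

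The only subtle point, and the main obstacle, is that $q_{1*}$ is in general merely left exact, so I must argue that it keeps the displayed sequence exact on the right and identify the kernel correctly. The key observation is that every sheaf in the sequence is annihilated by $\mathcal I_\Delta^{m+1}$, hence is supported scheme-theoretically on the $m$-th infinitesimal neighborhood $\Delta^{(m)}$ of the diagonal. Because $X$ is separated, $\Delta$ is closed, and for any affine open $V=\spec(A)\subseteq X$ the support of $\Delta^{(m)}$ over $V$ lies in $V\times V$, so $\Delta^{(m)}\cap q_1^{-1}(V)=\spec\big((A\otimes_\bbc A)/I_\Delta^{m+1}\big)$ is affine; thus $q_1|_{\Delta^{(m)}}$ is an affine morphism. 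Push-forward along an affine morphism is exact on quasicoherent sheaves, so $q_{1*}$ carries the tensored sequence to a short exact sequence, giving at once the surjectivity of $\pi_{m,m-1}$ and the exactness at $\mathcal J^m(\mathcal E)$ with kernel $q_{1*}(\mathcal I_\Delta^m/\mathcal I_\Delta^{m+1}\otimes q_2^*\mathcal E)$.

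It remains to identify this kernel with $q_{1*}(\mathcal I_\Delta^m/\mathcal I_\Delta^{m+1})\otimes\mathcal E$. Here I would use that $\mathcal I_\Delta^m/\mathcal I_\Delta^{m+1}$ is killed by $\mathcal I_\Delta$, so it is supported on the diagonal $\Delta\cong X$, on which $q_1$ and $q_2$ restrict to the same isomorphism. Consequently $q_2^*\mathcal E$ and $q_1^*\mathcal E$ agree after tensoring with $\mathcal I_\Delta^m/\mathcal I_\Delta^{m+1}$, and the projection formula for the locally free sheaf $\mathcal E$ yields
\[
q_{1*}\big(\mathcal I_\Delta^m/\mathcal I_\Delta^{m+1}\otimes q_1^*\mathcal E\big)\cong q_{1*}\big(\mathcal I_\Delta^m/\mathcal I_\Delta^{m+1}\big)\otimes\mathcal E,
\]
which is exactly the left-hand term in the asserted sequence.

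As a cross-check, and since exactness of a sequence of sheaves may be verified on an affine cover, one can instead argue entirely locally through the description \eqref{local module structure}: over $U=\spec(A)$ the sequence is the tensor product over $A$ (acting through the second factor) of the bimodule sequence $0\to I_\Delta^m/I_\Delta^{m+1}\to (A\otimes_\bbc A)/I_\Delta^{m+1}\to (A\otimes_\bbc A)/I_\Delta^{m}\to 0$ with the finitely generated projective, hence flat, $A$-module $\Gamma(U,\mathcal E)$. Flatness gives exactness with no left-derived correction, and the coincidence of the two $A$-actions on $I_\Delta^m/I_\Delta^{m+1}$ identifies the kernel with $(I_\Delta^m/I_\Delta^{m+1})\otimes_A\Gamma(U,\mathcal E)$. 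Thus the obstacle of passing through $q_{1*}$ is resolved either by the support-on-the-diagonal observation globally or, equivalently, by flatness of $\mathcal E$ in the local picture.
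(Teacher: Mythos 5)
Your proposal is correct and follows essentially the same route as the paper: tensor the tautological sequence $0\to \mathcal I_\Delta^m/\mathcal I_\Delta^{m+1}\to \mathcal O_{X\times X}/\mathcal I_\Delta^{m+1}\to \mathcal O_{X\times X}/\mathcal I_\Delta^m\to 0$ with $q_2^*\mathcal E$, exchange $q_2^*\mathcal E$ for $q_1^*\mathcal E$ on the kernel term using support on the diagonal, apply the projection formula, and push forward by $q_1$. The only (harmless) variation is how right-exactness of $q_{1*}$ is justified: the paper invokes the long exact sequence of higher direct images and notes $R^1q_{1*}(\mathcal I_\Delta^m/\mathcal I_\Delta^{m+1})=0$ because that sheaf is supported on the diagonal where $q_1$ is an isomorphism, whereas you observe that all three sheaves are supported on the affine-over-$X$ infinitesimal neighborhood $\Delta^{(m)}$, so $q_{1*}$ is exact on them outright.
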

\begin{proof}
Since $\mathcal E$ is locally free, the short exact sequence
$$0\rightarrow \mathcal I_\Delta^m/\mathcal I_\Delta^{m+1}\rightarrow \mathcal O_{X\times X}/\mathcal I_\Delta^{m+1}\rightarrow\mathcal O_{X\times X}/\mathcal I_\Delta^m\rightarrow 0.$$
extends to a short exact sequence
$$0\rightarrow \mathcal I_\Delta^m/\mathcal I_\Delta^{m+1}\otimes q_2^*\mathcal E\rightarrow \mathcal O_{X\times X}/\mathcal I_\Delta^{m+1}\otimes q_2^*\mathcal E\rightarrow\mathcal O_{X\times X}/\mathcal I_\Delta^m\otimes q_2^*\mathcal E\rightarrow 0.$$
Noting that $\mathcal I_\Delta^m/\mathcal I_\Delta^{m+1}\otimes q_2^*\mathcal E\cong \mathcal I_\Delta^m/\mathcal I_\Delta^{m+1}\otimes q_1^*\mathcal E$ and pushing forward by $q_1$ while applying the projection formula \cite[Exercise III.8.3]{Hartshorne}
$$0\rightarrow q_{1*}(\mathcal I_\Delta^m/\mathcal I_\Delta^{m+1})\otimes \mathcal E\rightarrow \mathcal O_{X\times X}/\mathcal I_\Delta^{m+1}\otimes q_2^*\mathcal E\rightarrow\mathcal O_{X\times X}/\mathcal I_\Delta^m\otimes q_2^*\mathcal E\rightarrow R^1q_{1*}(\mathcal I_\Delta^m/\mathcal I_\Delta^{m+1})\otimes \mathcal E.$$
Now since $\mathcal I_\Delta$ is supported on the diagonal where $q_1$ restricts to an isomorphism, we have $R^1q_{1*}(\mathcal I_\Delta^m/\mathcal I_\Delta^{m+1}) = 0$.
\end{proof}

We can realize the above result very naturally in terms of a the sheaf of K\"ahler differentials $\Omega_{X/\mathbb C}$ of $X$.
First, recall that the map $d: \mathcal O_X\rightarrow q_{1*}(\mathcal I_{\Delta}/\mathcal I_\Delta^2)$ defined affine locally by $s\mapsto s\otimes 1-1\otimes s$ is a differental, where here we are using the natural identification $\Gamma(U,\mathcal O_{X\times X})\cong \Gamma(U,\mathcal O_X)\otimes_{\mathbb C}\Gamma(U,\mathcal O_X)$.
Intuitively, it takes a function $f(x)$ defined locally on an open $U\subseteq X$ to the function $f(x)-f(y)$ on $X\times X$ modulo the ideal $I_\Delta^2$.
By the universal property of $\Omega_{X/\mathbb C}$, the differential induces a module homomorphism $\Omega_{X/\mathbb C}\rightarrow q_{1*}(\mathcal I_{\Delta}/\mathcal I_\Delta^2)$, which is actually an isomorphism.
\begin{defn}
The map $d: \mathcal O_X\rightarrow q_{1*}(\mathcal I_\Delta/\mathcal I_{\Delta}^2)$ is called the \vocab{universal derivative}.
\end{defn}

Furthermore, since $X$ is Cohen-Macaulay there exists a natural isomorphism \cite[Theorem 8.21.A]{Hartshorne} $\text{Sym}^m(\mathcal I_\Delta/\mathcal I_\Delta^2)\cong \mathcal I_\Delta^n/\mathcal I_\Delta^{n+1}$, where here $\text{Sym}^m(\mathcal F)$ denotes the symmetric product of an $\mathcal O_X$-module $\mathcal F$.
Hence we have isomorphisms
$$\text{Sym}^m(\Omega_{X/\mathbb C})\cong\text{Sym}^mq_{1*}(\mathcal I_\Delta/\mathcal I_\Delta^2)\cong q_{1*}\text{Sym}^m(\mathcal I_\Delta/\mathcal I_\Delta^2)\cong q_{1*}(\mathcal I_\Delta^m/\mathcal I_\Delta^{m+1}).$$
To summarize, we have the following result.
\begin{prop}
Let $\mathcal E$ be a locally free $\mathcal O_X$ module of finite rank over a Cohen-Macaulay scheme $X$.  Then there is a short exact sequence
$$0\rightarrow\text{Sym}^m(\Omega_{X/\mathbb C})\otimes\mathcal E\rightarrow\mathcal J^m(\mathcal E)\xrightarrow{\pi_{m,m-1}}\mathcal J^{m-1}(\mathcal E)\rightarrow 0$$
where the first arrow is locally defined by the map
$$(df_1\otimes \dots\otimes df_m)\otimes s_j\mapsto (df_1)(df_2)\dots(df_n)(1\otimes s_j),$$
using the local module structure for the jet bundle from \eqref{local module structure}, for $d$ the universal derivative of $X$ and $s_1,\dots, s_r$ a basis of sections for $\mathcal E$ as a $\mathcal O_X$-module.
\end{prop}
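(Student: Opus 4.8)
The plan is to derive this sequence directly from the preceding proposition by rewriting its kernel term and then verifying the stated local formula. That proposition already supplies the exact sequence
\[
0\rightarrow q_{1*}(\mathcal I_\Delta^m/\mathcal I_\Delta^{m+1})\otimes\mathcal E\rightarrow\mathcal J^m(\mathcal E)\xrightarrow{\pi_{m,m-1}}\mathcal J^{m-1}(\mathcal E)\rightarrow 0,
\]
so the only real task is to identify $q_{1*}(\mathcal I_\Delta^m/\mathcal I_\Delta^{m+1})$ with $\text{Sym}^m(\Omega_{X/\mathbb C})$ in a way compatible with the generators, after which one tensors with the locally free sheaf $\mathcal E$ (an operation preserving exactness).

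Next I would assemble the chain of isomorphisms recorded in the text immediately above the statement. First, the universal derivative $d$ induces, via the universal property of $\Omega_{X/\mathbb C}$, an isomorphism $\Omega_{X/\mathbb C}\cong q_{1*}(\mathcal I_\Delta/\mathcal I_\Delta^2)$, and hence $\text{Sym}^m(\Omega_{X/\mathbb C})\cong\text{Sym}^m q_{1*}(\mathcal I_\Delta/\mathcal I_\Delta^2)$. Second, since $\mathcal I_\Delta/\mathcal I_\Delta^2$ is supported on the diagonal $\Delta$, where $q_1$ restricts to an isomorphism onto $X$, the sheaf $q_{1*}(\mathcal I_\Delta/\mathcal I_\Delta^2)$ is just this $\mathcal O_\Delta$-module viewed as an $\mathcal O_X$-module; because symmetric powers are formed over the structure sheaf of the support, $q_{1*}$ interchanges with $\text{Sym}^m$, yielding $\text{Sym}^m q_{1*}(\mathcal I_\Delta/\mathcal I_\Delta^2)\cong q_{1*}\text{Sym}^m(\mathcal I_\Delta/\mathcal I_\Delta^2)$. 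Third, the Cohen-Macaulay hypothesis enters through the isomorphism $\text{Sym}^m(\mathcal I_\Delta/\mathcal I_\Delta^2)\cong\mathcal I_\Delta^m/\mathcal I_\Delta^{m+1}$ cited from \cite[Theorem 8.21.A]{Hartshorne}. Composing the three gives $\text{Sym}^m(\Omega_{X/\mathbb C})\cong q_{1*}(\mathcal I_\Delta^m/\mathcal I_\Delta^{m+1})$, and tensoring with $\mathcal E$ produces precisely the kernel appearing in the new sequence.

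Finally I would verify the explicit local description of the first arrow by tracking a generator through these identifications. Under $d$ the element $df_i$ corresponds to $f_i\otimes 1-1\otimes f_i$ modulo $\mathcal I_\Delta^2$, so the pure tensor $df_1\otimes\cdots\otimes df_m$ maps under the Cohen-Macaulay isomorphism to the class of the product $(f_1\otimes 1-1\otimes f_1)\cdots(f_m\otimes 1-1\otimes f_m)$ in $\mathcal I_\Delta^m/\mathcal I_\Delta^{m+1}$; tensoring with $s_j$ and using the local presentation \eqref{local module structure} of $\mathcal J^m(\mathcal E)$ recovers exactly $(df_1)(df_2)\cdots(df_m)(1\otimes s_j)$, as asserted. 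I expect the main obstacle to be the middle isomorphism: one must argue carefully that $q_{1*}$ commutes with $\text{Sym}^m$, which rests on the fact that $\mathcal I_\Delta/\mathcal I_\Delta^2$ lives scheme-theoretically on the diagonal with $q_1|_\Delta$ an isomorphism, so that the symmetric algebra may be formed either before or after pushing forward. The Cohen-Macaulay condition is precisely what upgrades the natural surjection from the symmetric power onto the associated graded of $\mathcal I_\Delta$ to an isomorphism, and it is the only place that hypothesis is used.
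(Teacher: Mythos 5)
Your proposal is correct and follows essentially the same route as the paper's own proof: both start from the preceding proposition's exact sequence, identify the kernel $q_{1*}(\mathcal I_\Delta^m/\mathcal I_\Delta^{m+1})\otimes\mathcal E$ with $\text{Sym}^m(\Omega_{X/\mathbb C})\otimes\mathcal E$ via the same chain of isomorphisms (universal derivative, commutation of $\text{Sym}^m$ with $q_{1*}$ on the diagonal, and the Cohen--Macaulay isomorphism from Hartshorne), and then verify the stated local formula by tracing generators, with the class of the product being well defined modulo $\mathcal I_\Delta^{m+1}$.
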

\begin{proof}
This follows from the discussion of the previous paragraph, along with retracing the precise values of the isomorphisms.
Note that the product in the right hand side of the last equation is taken in terms of representatives of each equivalence class $df_j$ modulo $\mathcal I_\Delta^2$ and that the final product is independent of the choice of representatives, since it's value is taken modulo $\mathcal I_\Delta^{m+1}$.
\end{proof}

When $X$ is nonsingular, the sheaf $\Omega_{X/\mathbb C}$ is locally free of finite rank, so that the above short exact sequence splits.  In particular, we can then write
\begin{equation}\label{decomposition of jet bundle}
\rho_m: \left(\mathcal O_X\oplus\sum_{j=1}^m\text{Sym}^j(\Omega_{X/\mathbb C})\right)\otimes\mathcal E\cong \mathcal J^m(\mathcal E)
\end{equation}
via the map defined affine locally by
\begin{equation}\label{decomposition of jet bundle map}
\rho_m: \left[f_0+ \sum_{j=1}^m (df_1\otimes\dots\otimes df_j)\right]\otimes s_j\mapsto \left[f_0\otimes 1 + \sum_{j=1}^m \left(\prod_{k=1}^j df_k\right)\right](1\otimes s_j),
\end{equation}
using the local module structure for the jet bundle from \eqref{local module structure}, with $s_1,\dots, s_r$ a basis of sections for $\mathcal E$ as a $\mathcal O_X$-module.
In particular, in this case the jet bundle is also locally free of finite rank.
Since localization is exact, this equality is certainly true over the smooth locus of $X$.
Notice that in this case
$$
\iota_{\ell,m}:\mathcal J^\ell(\mathcal E)\cong \left(\mathcal O_X\oplus\sum_{j=1}^{\ell}\text{Sym}^j(\Omega_{X/\mathbb C})\right)\otimes\mathcal E\xrightarrow{\subseteq}
\left(\mathcal O_X\oplus\sum_{j=1}^m\text{Sym}^j(\Omega_{X/\mathbb C})\right)\otimes\mathcal E\cong\mathcal J^m(\mathcal E)$$
defines a canonical splitting
\begin{center}
\begin{tikzcd}
    0\arrow{r} & \text{Sym}^m(\Omega_{X/\mathbb C})\otimes \mathcal E\arrow{r} & \mathcal J^m(\mathcal E)\arrow{r}{\pi_{m,m-1}} & \mathcal J^{m-1}(\mathcal E)\arrow{r}\arrow[bend left=33]{l}{\iota_{m-1,m}} & 0
\end{tikzcd}
\end{center}

Suppose that $U=\spec(A)$ is an affine open subset of $X$ over which $\mathcal E$ is free with basis $s_1,\dots, s_r$, for $A = \mathbb C[x_1,\dots, x_N]/I$.
We can write the $m$-jet of $s\in\Gamma(U)$ in terms of the decomposition \eqref{decomposition of jet bundle} as follows.
First, we may write $s = \sum_{j=1}^r f_j(x)s_j$ for $f_j(x)=f_j(x_1,\dots,x_N)\in A$.
Then for each $f_j(x)$, we can write
$$f_j(y) = \sum_{|I|\leq \ell_j} \frac{f_i^{(I)}(x)}{I!}(y-x)^I$$
in $\mathbb C[x_1,\dots, x_N,y_1,\dots,y_N]$,
where here $\ell_j$ is the total degree of $f_j(x)$ and we have adopted the usual multi-index notation $I=(i_1,\dots,i_N)$.
It follows that in $A\otimes_{\bbc}A$
$$1\otimes f_j(x) = \sum_{|I|\leq \ell_j} \left(\frac{f_j^{(I)}(x)}{I!}\otimes 1\right)\prod_{k=1}^N(1\otimes x_k-x_k\otimes 1)^{i_k},$$
and therefore in $\Gamma(U,\mathcal J^m(\mathcal E))$ we can write
$$1\otimes s = \sum_{j=1}^r\sum_{|I|\leq \min(\ell_j,m)} \left(\frac{f_j^{(I)}(x)}{I!}\otimes 1\right)\prod_{k=1}^N(1\otimes x_k-x_k\otimes 1)^{i_k}(1\otimes s_j),$$
and thus
\begin{equation}\label{decomposition of m-jet}
1\otimes s = \sum_{j=1}^r\rho_m\left(\sum_{|I|\leq \min(\ell_j,m)} \frac{f_j^{(I)}(x)}{I!}dx^I\otimes s_j\right).
\end{equation}
where here $dx^I = dx_1^{\otimes i_1}\otimes\dots\otimes dx_N^{\otimes i_N}\in \Gamma(U,\text{Sym}^m(\Omega_{X/\bbc}))$.

The dual of the bundle of $m$-jets is an $\mathcal O_X$-module whose sections are represented by linear partial differential operators acting on sections of $\mathcal E$.
\begin{defn}
The \vocab{dual of the bundle of $m$-jets} over $\mathcal E$ is the sheaf
$$\mathcal J_m(\mathcal E) = \shom_{\mathcal O_X}(\mathcal J^m(\mathcal E),\mathcal E).$$
\end{defn}
\begin{remk}
This is the dual of $\mathcal J_m(\mathcal E)$, viewed as a module over the geometric vector bundle on $X$ associated with $\mathcal E$.
\end{remk}
Note that over an affine open as described in the previous paragraph,
$$\Gamma(U,\mathcal J_m(\mathcal E)) = \vspan_{\mathcal O_X} \{\rho_m(dx^I\otimes s_j): |I|\leq m,\ 1\leq j\leq r\},$$
so an element $\chi\in \Gamma(U,\mathcal J_m(\mathcal E))$ is determined by it's values $\chi_{I,j} = \chi(\rho_m(dx^I\otimes s_j))\in\Gamma(U,\mathcal E)$.  In particular
\begin{equation}\label{linear operator formulation of dual}
\chi(j_ms) = \sum_{j=1}^r\frac{f_j^{(I)}(x)}{I!}\chi_{I,j},\quad s = \sum_{j=1}^r f_j(x)s_j
\end{equation}
which shows that $\chi$ acts as a partial differential operator locally on the sections of $\mathcal E$.

\subsection{Infinite jets}
We next define the bundle of infinite jets $\mathcal J^\infty(\mathcal E)$ of a locally free $\mathcal O_X$-module of finite rank over $X$.
In the setting of a smooth vector bundle $\pi: E\rightarrow M$ over a real manifold $M$, the infinite jet bundle $\mathcal J^\infty(\pi)$ is defined as the projective limit of the system 
of vector bundles 
$\pi_{m,\ell}: \mathcal J^m(\pi)\rightarrow\mathcal J^\ell(\pi)$ (see \cite[Chapter 7]{Saunders} for details).  The result is an infinite dimensional vector bundle 
whose fibers are Fr\'echet spaces.  Since the hom functor commutes with projective limits in the second entry, it follows that the sections of $\mathcal J^\infty(\pi)$ 
over $M$ are the projective limit of sections over $\mathcal J^m(\pi)$.
This motivates our definition of the bundle of infinite jets.
\begin{defn}
Let $\mathcal E$ be a locally free sheaf of finite rank on a scheme $X$.
The \vocab{infinite jet bundle} of $\mathcal E$ is the sheaf
$$\mathcal J^\infty(\mathcal E) = \varprojlim_m \mathcal J^m(\mathcal E)$$
where here the limit is taken over the directed system of sheaves defined by the push-forwards $\pi_{m,\ell}: \mathcal J^m(\mathcal E)\rightarrow\mathcal J^\ell(\mathcal E)$.
\end{defn}
As a sheaf, this is given by
$$\Gamma(U,\mathcal J^\infty(\mathcal E)) = \varprojlim_m \Gamma(U,\mathcal J^m(\mathcal E)) = \left\lbrace(\wt s_0,\wt s_1,\dots)\in \prod_{m=0}^\infty \Gamma(U,J^m(\mathcal E)): \pi_{m\ell}(\wt s_m) = \wt s_\ell\right\rbrace.$$
Sections of $\mathcal J^\infty(\mathcal E)$ correspond to formal Taylor series.
Alternatively, we can think of $\mathcal J^\infty(\mathcal E)$ as a pullback and push-forward involving the formal completion of $X\times X$ at the diagonal.

Each section $s$ of $\mathcal E$ over an open $U\subseteq X$ gives rise to a certain section of $\mathcal J^\infty(\mathcal E)$ in an obvious fashion.
\begin{defn}
Let $s\in \Gamma(U,\mathcal E)$ be a section.
The \vocab{$\infty$-jet of $s$} is the section of $\mathcal J^\infty(\mathcal E)$ over $U$ defined by
\begin{equation}\label{j_infty}
j^\infty(s) = (j^0(s),j^1(s),j^2(s),\dots).
\end{equation}
\end{defn}

At this point, the theory of \emph{analytic} jet bundles diverges from the theory of smooth jet bundles, since for smooth jet bundles the stalk of the infinite jet bundle is generated by $\infty$-jets of sections of $\mathcal E$.
This is not true in the analytic or algebraic situations, where the stalks may be identified with formal power series which do not necessarily converge.
We can begin to distinguish the bundle generated by infinite jets from the whole bundle via the weak bundle of infinite jets.

\begin{defn}\label{def: weak jet bundle}
Let $\mathcal E$ be a locally free sheaf of finite rank on a Cohen-Macaulay scheme $X$.
The \vocab{semi-infinite jet bundle} of $\mathcal E$ is the sheaf
$$\mathcal J^{\infty,0}(\mathcal E) = \varinjlim_m \mathcal J^m(\mathcal E).$$
where here the colimit is taken over the directed system of sheaves defined by the splitting maps $\iota_{m,\ell}: \mathcal J^\ell(\mathcal E)\rightarrow\mathcal J^m(\mathcal E)$.
\end{defn}
Over any affine open $U\subseteq X$, this presheaf is given by
$$\Gamma(U,\mathcal J^{\infty,0}(\mathcal E)) = \varinjlim_m\Gamma(U,\mathcal J^m(\mathcal E)) = \amalg_m \Gamma(U,\mathcal J^m(\mathcal E))/\sim$$
where here $\wt s_k\in \Gamma(U,\mathcal J^k(\mathcal E))$ for $k=\ell,m$ with $\ell <m$ are related by $\sim$ if and only if $\iota_{\ell,m}(s_\ell) = s_m$.
Note however that this definition does not extend globally, since colimit presheaf must be sheafified.

The semi-infinite jet bundle may be naturally identified with a subsheaf of the infinite jet bundle via
$$\mathcal J^{\infty,0}(\mathcal E)\rightarrow\mathcal J^\infty(\mathcal E),\quad s_m\mapsto (\pi_{m,0}(s_m),\dots, \pi_{m,m-1}(s_m),s_m,\iota_{m,m+1}(s_m),\dots).$$
Furthermore, over an affine ring where the algebraic functions have \emph{finite} Taylor series, $\infty$-jets of sections of $\mathcal E$ will lie in the semi-infinite jet bundle.
\begin{thm}
Let $X$ be a finite dimensional Cohen-Macaulay scheme and $\mathcal E$ a locally free $\mathcal O_X$ module on $X$ of finite rank.
Then for any affine open subset $U\subseteq X$, the infinite jet of a section $s\in\Gamma(U,\mathcal E)$ lies in the semi-infinite jet bundle $\mathcal J^{\infty,0}(\mathcal E)$.
\end{thm}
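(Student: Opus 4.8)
The plan is to reduce to a local computation on an affine chart and then exploit the fact that, on such a chart, every section has a \emph{finite} Taylor expansion, which forces the jets $j^m(s)$ to stabilize once $m$ exceeds the degree of $s$. The assertion is local, so I may freely shrink $U$.

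First I would set up the local model. Since the claim concerns an affine open $U\subseteq X$, I may assume $U=\spec(A)$ with $A=\mathbb C[x_1,\dots,x_N]/I$ and that $\mathcal E$ is free over $U$ with basis $s_1,\dots,s_r$. Writing $s=\sum_{j=1}^r f_j(x)\,s_j$ with $f_j\in A$, I choose polynomial representatives and set $\ell_j=\deg f_j$ and $\ell=\max_j \ell_j$, which is finite precisely because $A$ is finitely generated. This finiteness is the entire engine of the proof.

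Next I would invoke the explicit local description of jets. By the computation producing \eqref{decomposition of m-jet}, the representative $1\otimes s$ in $A\otimes_{\mathbb C}\Gamma(U,\mathcal E)$ equals the finite sum $\sum_{j}\sum_{|I|\le \min(\ell_j,m)}\tfrac{f_j^{(I)}(x)}{I!}\,dx^I\otimes s_j$. The key observation is that for every $m\ge \ell$ one has $\min(\ell_j,m)=\ell_j$, so the expression for $j^m(s)$ is literally independent of $m$: in the decomposition \eqref{decomposition of jet bundle} the components of $j^m(s)$ in $\text{Sym}^j(\Omega_{X/\mathbb C})\otimes\mathcal E$ vanish for all $j>\ell$. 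Hence $j^m(s)$ lies in the image of the splitting $\iota_{\ell,m}$, and since $\pi_{m,\ell}(j^m(s))=j^\ell(s)$ while $\iota_{\ell,m}$ is the section of $\pi_{m,\ell}$ onto that image, I conclude $j^m(s)=\iota_{\ell,m}(j^\ell(s))$ for all $m\ge\ell$.

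Finally I would translate this stabilization into the statement about the colimit. The identity $j^m(s)=\iota_{\ell,m}(j^\ell(s))$ for $m\ge\ell$, together with $\pi_{\ell,k}(j^\ell(s))=j^k(s)$ for $k\le\ell$, says exactly that the class $[\,j^\ell(s)\,]\in\varinjlim_m\Gamma(U,\mathcal J^m(\mathcal E))=\Gamma(U,\mathcal J^{\infty,0}(\mathcal E))$ maps, under the canonical embedding $\mathcal J^{\infty,0}(\mathcal E)\hookrightarrow\mathcal J^\infty(\mathcal E)$, to the sequence $(j^0(s),j^1(s),j^2(s),\dots)=j^\infty(s)$ of \eqref{j_infty}. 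Therefore $j^\infty(s)\in\mathcal J^{\infty,0}(\mathcal E)$, as desired. The main obstacle, and the only nontrivial point, is the stabilization in the second step: one must verify that the jet never acquires symmetric components of order exceeding $\ell$. This is exactly where the algebraic finite-degree hypothesis is indispensable, in contrast to the analytic or formal setting flagged in the text, where Taylor series do not terminate and $j^\infty(s)$ genuinely escapes the semi-infinite bundle.
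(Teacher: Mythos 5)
Your proposal is correct and follows essentially the same route as the paper's proof: both reduce to the affine local model $U=\spec(A)$ with $\mathcal E$ free, invoke the finite Taylor expansion \eqref{decomposition of m-jet}, and observe that $j^m(s)=\iota_{\ell,m}(j^\ell(s))$ for $m\ge\ell=\max_j\deg f_j$, which places $j^\infty(s)$ in the colimit. The only difference is that you spell out the final identification of the stabilized class with $j^\infty(s)$ under the embedding $\mathcal J^{\infty,0}(\mathcal E)\hookrightarrow\mathcal J^\infty(\mathcal E)$, a step the paper leaves implicit.
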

\begin{proof}
Suppose that $U=\spec(A)$ is an affine open subset of $X$ over which $\mathcal E$ is free with basis $s_1,\dots, s_r$, for $A = \mathbb C[x_1,\dots, x_N]/I$.
By \eqref{decomposition of m-jet}, 
$$1\otimes s = \rho_m\left(\sum_{j=1}^r\sum_{|I|\leq \min(\ell_j,m)} \left(\frac{f_j^{(I)}(x)}{I!}dx_1^{\otimes i_1}\otimes\dots\otimes dx_N^{\otimes i_N}\right)\otimes s_j\right).$$
Consequently $\iota_{\ell,m}(j^\ell s) = j^ms$ for all $m>\ell$ and thus $j^\infty f$ lies in $\Gamma(U,\mathcal J^{\infty,0}(\mathcal E))$.
\end{proof}


\end{document}